\newcommand{\Title}[1]{\bigskip\bigskip\centerline{\bf #1}\bigskip}
\newcommand{\Author}[1]{\medskip\centerline{ \it #1}}
\newcommand{\Affiliation}[1]{\medskip\centerline{#1}}
\newcommand{\Email}[1]{\medskip\centerline{#1}\bigskip}
\begin{document}

\newcommand{\N}{\mbox {$\mathbb N $}}
\newcommand{\Z}{\mbox {$\mathbb Z $}}
\newcommand{\Q}{\mbox {$\mathbb Q $}}
\newcommand{\R}{\mbox {$\mathbb R $}}
\newcommand{\lo }{\longrightarrow }
\newcommand{\ul}{\underleftarrow }
\newcommand{\rl}{\underrightarrow }
\newcommand{\rs }{\rightsquigarrow }
\newcommand{\ra }{\rightarrow } 
\newcommand{\dd }{\rightsquigarrow } 
\newcommand{\rars }{\Leftrightarrow }
\newcommand{\ol }{\overline }
\newcommand{\la }{\langle }
\newcommand{\tr }{\triangle }
\newcommand{\xr }{\xrightarrow }
\newcommand{\de }{\delta }
\newcommand{\pa }{\partial }
\newcommand{\LR }{\Longleftrightarrow }
\newcommand{\Ri }{\Rightarrow }
\newcommand{\va }{\varphi }
\newcommand{\Den}{{\rm Den}\,}
\newcommand{\Ker}{{\rm Ker}\,}
\newcommand{\Reg}{{\rm Reg}\,}
\newcommand{\Fix}{{\rm Fix}\,}
\newcommand{\Sup}{{\rm Sup}\,}
\newcommand{\Inf}{{\rm Inf}\,}
\newcommand{\Img}{{\rm Im}\,}
\newcommand{\Id}{{\rm Id}\,}
\newcommand{\ord}{{\rm ord}\,}

\newtheorem{theorem}{Theorem}[section]
\newtheorem{lemma}[theorem]{Lemma}
\newtheorem{proposition}[theorem]{Proposition}
\newtheorem{corollary}[theorem]{Corollary}
\newtheorem{definition}[theorem]{Definition}
\newtheorem{example}[theorem]{Example}
\newtheorem{examples}[theorem]{Examples}
\newtheorem{xca}[theorem]{Exercise}
\theoremstyle{remark}
\newtheorem{remark}[theorem]{Remark}
\newtheorem{remarks}[theorem]{Remarks}
\numberwithin{equation}{section}

\def\leftmark{L.C. Ciungu}


\Title{IMPLICATIVE-ORTHOLATTICES AS ORTHOGONALITY SPACES} 
\title[Implicative-ortholattices as orthogonality spaces]{}
                                                                      
\Author{\textbf{LAVINIA CORINA CIUNGU}}
\Affiliation{Department of Mathematics} 
\Affiliation{St Francis College}
\Affiliation{179 Livingston Street, Brooklyn, New York, NY 11201, USA}
\Email{lciungu@sfc.edu}

\begin{abstract} 
We obtain an orthogonality space by endowing an implicative-ortholattice with a suitable orthogonality relation; for such spaces, we also investigate the particular case of implicative-orthomodular lattices. 
Moreover, we define the commutativity relation between two elements of an implicative-ortholattice, as well as the Sasaki projections on this structure. Furthermore, we characterize the implicative-orthomodular lattices and   implicative-Boolean algebras, showing that the center of an implicative-orthomodular lattice is an implicative-Boolean algebra. We prove that an implicative-ortholattice is an implicative-orthomodular lattice if and only if it admits a full Sasaki set of projections.  
Finally, based on Sasaki maps on implicative-ortholattices, we introduce the notion of Sasaki spaces, proving that 
when a complete implicative-ortholattice admits a full Sasaki set of projections, it is a Sasaki space. 
We also provide a characterization of Dacey spaces arising from implicative-ortholattices. \\

\noindent
\textbf{Keywords:} {implicative-ortholattice, implicative-orthomodular lattice, implicative-Boolean algebra, Sasaki projection, orthogonality space, Sasaki space, Dacey space} \\
\textbf{AMS Subject Classification (2020):} 06C15, 03G25, 06A06, 81P10
\end{abstract}

\maketitle

\section{Introduction} 

A mathematical description of the structure of random events of a quantum mechanical system is one of the most important problem formulated by G. Birkhoff and J. von Neumann in their famous paper ``The Logic of Quantum Mechanics" \cite{Birk1}. 
The behavior of physical systems which describe the events concerning elementary particles is different from that of physical systems observed in classical physics, and another approach had to be developed. 
The first model proposed by G. Birkhoff and J. von Neumann was based on Hilbert spaces and their operators, taking into consideration that in quantum mechanics the state of a physical system is represented by a vector in a Hilbert space. 
The set of all closed subspaces of a Hilbert space forms an orthomodular lattice.  
Therefore, orthomodular posets and orthomodular lattices play a fundamental role in the study of Hilbert spaces, and 
the development of theories on orthomodular structures remains a central topic of research. 
Orthomodular lattices were first introduced by G. Birkhoff and J. von Neumann \cite{Birk1}, and, independently, by 
K. Husimi \cite{Husimi} by studying the structure of the lattice of projection operators on a Hilbert space.  
They introduced the orthomodular lattices as the event structures describing quantum mechanical experiments. 
Later, orthomodular lattices and orthomodular posets were considered as the standard quantum logic \cite{Varad1} (see also \cite{DvPu}). 
A comprehensive overview of quantum structures and quantum logic can be found in handbooks \cite{Egl1, Egl2}. 
Complete studies on orthomodular lattices are presented in the monographs \cite{Beran, Kalm1}. \\
In \cite{Ior30}, \cite{Ior35}, A. Iorgulescu introduced a new framework of algebras of the form $(A,\odot,^*,1)$ 
based on m-BE algebras, which includes ortholattices (redefined as involutive m-BE algebras verifying 
(m-Pimpl) $((x\odot y^*)^*\odot x^*)^*=x$ \cite[Def. 9.2.17]{Ior35}), Boolean algebras (redefined as involutive m-BE algebras verifying (m-Pdiv) $x\odot (x\odot y^*)^*=x\odot y$ \cite[Def. 9.2.27]{Ior35}), quantum MV algebras, 
MV algebras etc.. This is analogous to the existing framework of algebras of the form $(A,\ra,^*,1)$ 
based on BE algebras, which already includes implicative-ortholattices (defined as implicative involutive BE algebras \cite[Def. 3.4.3]{Ior35}), implicative-Boolean algebras (defined in 2009 following the axioms 
of the classical propositional logic and redefined \cite[Def. 3.4.12]{Ior35} as implicative-ortholattices verifying $(@)$ $(y^*\ra x)\ra y=x\ra y$), Wajsberg algebras etc. The two frameworks were connected, in the involutive case, 
by two mutually inverse mappings, $\Phi$ and $\Psi$: $ x\odot y:=(x\ra y^*)^*$ and $x\ra y:=(x\odot y^*)^*$ \cite[Thm. 17.1.1]{Ior35}. Thus, $(impl)\Longleftrightarrow (m$-$Pimpl)$) \cite[Thm. 17.1.1$(4)$]{Ior35}, involutive BE algebras are term-equivalent to involutive m-BE algebras \cite[Cor. 17.1.3]{Ior35}, implicative-ortholattices are term-equivalent to ortholattices \cite[Cor. 17.1.6]{Ior35} and implicative-Boolean algebras are term-equivalent to Boolean algebras \cite[Cor. 17.1.7]{Ior35}. 
Based on implicative-ortholattices, we redefined in \cite{Ciu83} the orthomodular lattices, by introducing 
and studying the implicative-orthomodular lattices (i-OML for short), and we gave certain characterizations of these structures. \\
Implicative-Boolean algebras, implicative-ortholattices and implicative-orthomodular lattices are
implicative versions (i.e. algebras with an implication, $\ra$, of the form $(A,\ra,^*,1)$) of Boolean algebras, ortholattices and orthomodular lattices, respectively. These implicative versions were obtained after redefining the Boolean algebras, ortholattices and orthomodular lattices as algebras of the form $(A,\odot,^*,1)$, in \cite{Ior35}.  
By \cite[Thm. 17.1.1]{Ior35} (the mappings $\Phi$ and $\Psi$), the involutive algebras from the framework of algebras of the form $(A,\ra,^*,1)$ are term-equivalent to the corresponding involutive algebras from the framework of algebras 
of the form $(A,\odot,^*,1)$.
The purpose of introducing these implicative versions is, first of all, to bring the structure closer to the corresponding logic (where the truth is represented by 1 and we have an implication, $\ra$, and a negation, $\neg$) -- which can thus easily derived. \\
J.R. Dacey was the first to study orthogonality spaces from the point of view of the foundations of quantum physics \cite{Dacey} (see also \cite{Paseka2}). Remarkable results on orthogonality spaces were recently presented in \cite{Lind1, Paseka1, Paseka2, Vetter1, Vetter2}. \\
The notion of Sasaki projections on orthocomplemented lattices was introduced by U. Sasaki \cite{Sasaki}, and this 
notion was extended to other structures \cite{Finch, Chajda1}. 
The composition of Sasaki projections on orthomodular lattices was investigated in \cite{Chev}. \\
\indent
In this paper, we obtain an orthogonality space by endowing an implicative-ortholattice with an appropriate orthogonality relation, and we investigate the properties of these spaces. Special results are proved for the particular case of implicative-orthomodular lattices as orthogonality spaces. 
We introduce and study the Sasaki projections, and, based on these notions, we define the commutativity relation $\mathcal{C}$ between two elements of an implicative-ortholattice, proving that an implicative-ortholattice $X$ is an implicative-orthomodular lattice if and only if for all $x,y\in X$, $x\mathcal{C} y$ implies $y\mathcal{C} x$. 
We provide characterizations of the implicative-Boolean algebras, proving that the center of an implicative-orthomodular lattice is an implicative-Boolean algebra. 
We also introduce the notion of Sasaki set of projections, proving that an implicative-ortholattice is an 
implicative-orthomodular lattice if and only if it admits a full Sasaki set of projections. 
Inspired by \cite{Lind1}, we define Sasaki maps on the associated orthogonality space of an 
implicative-ortholattice, and for complete implicative-orthomodular lattices, a method for constructing such 
a map is presented. We also characterize the Dacey spaces defined by implicative-ortholattices.   
Finally, we prove that when a complete implicative-ortholattice admits a full Sasaki set of projections, it is 
a Sasaki space. \\
The paper is organized as follows. 
Section 2 recalls the notions of BE algebras, implicative-ortoholattices, implicative-orthomodular 
lattices and implicative-Boolean algebras used in this paper and presents additional results concerning these 
structures. 
In Section 3, we define an orthogonality relation on an implicative-ortholattice, obtaining an orthogonality space,
and we study the properties of these spaces, proving special results for the particular case of implicative-orthomodular lattices as orthogonality spaces. 
Section 4 is devoted to the study of Sasaki projections and the commutativity between two elements of an 
implicative-ortholattice, and, based on these notions, to providing characterizations of implicative-orthomodular 
lattices. 
In Section 5, we define the implicative-Boolean center of an implicative-ortholattice and, 
using the divisibility relation, we provide characterizations of implicative-Boolean algebras. 
The Sasaki sets of projections are introduced and investigated in Section 6, and it is proved that 
an implicative-ortholattice is an implicative-orthomodular lattice if and only if it admits a full Sasaki 
set of projections. 
Finally, in Section 7, we define and characterize Dacey and Sasaki orthogonality spaces associated with implicative-ortholattices. \\

Studying orthogonality spaces based on implicative-ortholattices may offer a new perspective on quantum 
logic by revealing a more direct connection among various quantum structures that include an implication. 
As benefits of this work, we can mention the following: \\
$\hspace*{0.50cm}$ $-$ Within the framework of orthogonality over implicative-ortholattices, certain results in 
quantum logic may admit simpler and more natural proofs, and new properties may be explored. \\
$\hspace*{0.50cm}$ $-$ When analogues of probabilities are introduced on certain quantum structures, orthogonality plays the role of addition for disjoint sets in the event space. 
In studying states, measures, and state operators on quantum structures with an implication, the new concept of orthogonality seems to be more appropriate to use (see for example \cite{Ciu85}). \\

$\vspace*{1mm}$

\section{Preliminaries}

As we have already mentioned, based on implicative-ortholattices, we redefined the orthomodular lattices 
by introducing the implicative-orthomodular lattices. 
With simpler definitions and signatures, the implicative versions of ortholattices and orthomodular lattices 
allow us to prove new properties and provide new characterizations of these quantum structures. \\
We recall the notions of BE algebras, implicative-ortoholattices, implicative-orthomodular lattices and 
implicative-Boolean algebras that will be used in the paper. 
Additionally, we prove new results regarding implicative-ortholattices and implicative-orthomodular lattices. 

\emph{BE algebras} were introduced in \cite{Kim1} as algebras $(X,\ra,1)$ of type $(2,0)$ satisfying the 
following conditions for all $x,y,z\in X$: 
$(BE_1)$ $x\ra x=1;$ 
$(BE_2)$ $x\ra 1=1;$ 
$(BE_3)$ $1\ra x=x;$ 
$(BE_4)$ $x\ra (y\ra z)=y\ra (x\ra z)$. 
A relation $\le$ is defined by: for all $x, y\in X$, $x\le y$ if and only if $x\ra y=1$. 
A BE algebra $X$ is \emph{bounded} if there exists $0\in X$ such that $0\le x$ for all $x\in X$. 
In a bounded BE algebra $(X,\ra,0,1)$ we define $x^*=x\ra 0$ for all $x\in X$. 
A bounded BE algebra $(X,\ra,0,1)$ with the additional operation ``$^*$" will be denoted by $(X,\ra,^*,1)$ 
(we have $0=1^*)$. 
A bounded BE algebra $X$ is called \emph{involutive} if $x^{**}=x$ for any $x\in X$. 

\begin{lemma} \label{qbe-10} \cite{Ciu83} 
Let $(X,\ra,1)$ be a BE algebra. The following hold for all $x,y,z\in X$: \\
$(1)$ $x\ra (y\ra x)=1;$ 
$(2)$ $x\le (x\ra y)\ra y$. \\
If $X$ is bounded, then: \\
$(3)$ $x\ra y^*=y\ra x^*;$ 
$(4)$ $x\le x^{**}$. \\
If $X$ is involutive, then: \\
$(5)$ $x^*\ra y=y^*\ra x;$ 
$(6)$ $x^*\ra y^*=y\ra x;$ \\
$(7)$ $(x\ra y)^*\ra z=x\ra (y^*\ra z);$ 
$(8)$ $x\ra (y\ra z)=(x\ra y^*)^*\ra z;$ \\ 
$(9)$ $(x^*\ra y)^*\ra (x^*\ra y)=(x^*\ra x)^*\ra (y^*\ra y)$.  
\end{lemma}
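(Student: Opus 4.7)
The plan is to derive identities $(1)$--$(9)$ in order, each building on those already established, using only the BE axioms and, where required, boundedness and involutivity. The central tool is the exchange axiom $(BE_4)$, together with the unfolding $x^*=x\ra 0$.

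For the two identities valid in every BE algebra I would apply $(BE_4)$ directly: $(1)$ follows because $x\ra(y\ra x)=y\ra(x\ra x)=y\ra 1=1$, and $(2)$ reduces to showing $x\ra((x\ra y)\ra y)=1$, which by $(BE_4)$ equals $(x\ra y)\ra(x\ra y)=1$. In the bounded case, $(3)$ is just $(BE_4)$ specialised to $z=0$, and $(4)$ follows immediately from $(3)$ applied to the pair $(x,x)$, giving $x\ra x^{**}=x^*\ra x^*=1$.

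Assuming involutivity, I would obtain $(5)$ and $(6)$ from $(3)$ by substituting $y^*$ for $y$ (or $x^*$ for $x$) and then cancelling double negations. For $(7)$ the chain would be: rewrite $(x\ra y)^*\ra z$ as $z^*\ra(x\ra y)$ using $(5)$, exchange by $(BE_4)$ to $x\ra(z^*\ra y)$, and apply $(5)$ once more to the inner expression to reach $x\ra(y^*\ra z)$. Identity $(8)$ is its mirror image, obtained by rewriting $y\ra z$ as $z^*\ra y^*$ via $(5)$--$(6)$, exchanging with $(BE_4)$, and then folding the outer $z^*$ back into $(x\ra y^*)^*\ra z$ through $(5)$.

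The real bookkeeping challenge is $(9)$. I would start from $(7)$ with $z:=x^*\ra y$ to obtain
\[
(x^*\ra y)^*\ra(x^*\ra y)=x^*\ra\bigl(y^*\ra(x^*\ra y)\bigr),
\]
then use $(BE_4)$ internally to migrate $x^*$ outward, yielding $x^*\ra\bigl(x^*\ra(y^*\ra y)\bigr)$. Finally $(8)$, applied with outer antecedent $x^*$ and inner antecedent $x^*$, collapses this expression to $(x^*\ra x^{**})^*\ra(y^*\ra y)$, which becomes the required $(x^*\ra x)^*\ra(y^*\ra y)$ after one use of involutivity. The delicate point is to sequence these rewrites so that the term $y^*\ra y$ emerges cleanly before $(8)$ is invoked; several natural attempts instead produce cycles of $(BE_4)$ applications that never consolidate the inner factor.
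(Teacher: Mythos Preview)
Your proof is correct. The paper does not supply its own argument for this lemma (it is merely cited from \cite{Ciu83}), so there is nothing to compare against; your derivations of $(1)$--$(9)$ from the BE axioms, boundedness, and involutivity are all valid, including the chain for $(9)$ via $(7)$, $(BE_4)$, and $(8)$. One cosmetic point: for $(4)$ you write ``$(3)$ applied to the pair $(x,x)$'', but the substitution you actually need is $y:=x^*$ in $(3)$, which is exactly what your displayed computation $x\ra x^{**}=x^*\ra x^*=1$ uses; the phrasing ``pair $(x,x)$'' is misleading since taking $y=x$ in $(3)$ gives only the tautology $x\ra x^*=x\ra x^*$.
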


\noindent
In a BE algebra $X$, we define the additional operation: \\
$\hspace*{3cm}$ $x\vee_Q y=(x\ra y)\ra y$. \\
If $X$ is involutive, we define the operation: \\
$\hspace*{3cm}$ $x\wedge_Q y=(x^*\vee_Q y^*)^*=((x^*\ra y^*)\ra y^*)^*$, \\
and the relations $\le_Q$ and $\le_L$ by: \\
$\hspace*{3cm}$ for all $x, y\in X$, $x\le_Q y$ iff $x=x\wedge_Q y$ and $x\le_L y$ iff $x=(x\ra y^*)^*$. \\
\noindent
Obviously, $x\le_L y$ iff $x^*=x\ra y^*=y\ra x^*$. \\
Note that the relation $\le_L$ was introduced earlier in \cite[page 213]{Ior35} under the name $\le^P$. 

\begin{proposition} \label{qbe-20} 
Let $X$ be an involutive BE algebra. The following hold for all $x,y,z,u\in X$: \\
$(1)$ $x\le_Q y$ implies $x=y\wedge_Q x$ and $y=x\vee_Q y;$ \\
$(2)$ $\le_Q$ is reflexive and antisymmetric; \\
$(3)$ $x\vee_Q y=(x^*\wedge_Q y^*)^*;$ \\ 
$(4)$ $x\le_Q y$ implies $x\le y;$ \\
$(5)$ $x, y\le_Q z$ and $z\ra x=z\ra y$ imply $x=y$ \emph{(cancellation law)}; \\
$(6)$ $x\le_L y$ implies $x\le y;$ \\ 
$(7)$ $\le_L$ is antisymmetric and transitive; \\
$(8)$ $z\le_L x$ and $z\le_L y$ imply $z\le_L (x\ra y^*)^*;$ \\
$(9)$ $(x\ra y^*)^*\ra (z\ra u^*)=(x\ra z^*)^*\ra (y\ra u^*)$.   
\end{proposition}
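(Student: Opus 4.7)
The plan is to verify the nine items roughly in the stated order, grouping (1)--(5) (properties of $\le_Q$ and of the pair $\wedge_Q, \vee_Q$) together, and then (6)--(9) (properties of $\le_L$ and the final identity). Each argument is a short symbolic manipulation using Lemma~\ref{qbe-10} together with the involution $x^{**}=x$; the only conceptual move is to prove (4) first and use it as a wedge to open (1), (2), and the cancellation law.

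The decisive observation is that $x\le_Q y$ unfolds to $x^*=(x^*\ra y^*)\ra y^*$, and Lemma~\ref{qbe-10}(1) gives $y^*\le (x^*\ra y^*)\ra y^* = x^*$, so $x\le y$, which is precisely (4). From this, (1) is immediate: $x\ra y=1$ collapses $x\vee_Q y=(x\ra y)\ra y$ to $y$, and $y^*\ra x^*=1$ collapses $y\wedge_Q x=((y^*\ra x^*)\ra x^*)^*$ to $x^{**}=x$. Reflexivity in (2) is immediate from the definition via $(BE_1)$ and $(BE_3)$, and antisymmetry follows by the same collapsing trick applied twice. For (5), Lemma~\ref{qbe-10}(6) rewrites $x^*\ra z^*=z\ra x$, so that $x=x\wedge_Q z=((z\ra x)\ra z^*)^*$; the hypothesis $z\ra x=z\ra y$ then forces $x=y$. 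Statement (3) is a formal expansion of the definition of $\wedge_Q$ followed by one application of involution.

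The $\le_L$ part runs in parallel. For (6), $x\le_L y$ reads $x^*=x\ra y^*$, and Lemma~\ref{qbe-10}(1) forces $y^*\le x\ra y^*=x^*$. Antisymmetry in (7) drops out of combining $x^*=y\ra x^*$ and $y^*=y\ra x^*$, both obtained from the two $\le_L$ hypotheses via Lemma~\ref{qbe-10}(3). Transitivity is the slickest step: given $x\le_L y$ and $y\le_L z$, Lemma~\ref{qbe-10}(8) yields $x\ra(y\ra z^*)=(x\ra y^*)^*\ra z^*=x\ra z^*$, while the two hypotheses give $x\ra(y\ra z^*)=x\ra y^*=x^*$, so $x\ra z^*=x^*$. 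Part (8) is a short chain of the same kind: $(BE_4)$ followed by Lemma~\ref{qbe-10}(3) yields $z\ra(x\ra y^*)=x\ra(z\ra y^*)=x\ra z^*=z^*$, which is exactly the condition $z\le_L(x\ra y^*)^*$.

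Finally, (9) is a direct calculation. Lemma~\ref{qbe-10}(7) transforms the left-hand side into $x\ra(y\ra(z\ra u^*))$ and the right-hand side into $x\ra(z\ra(y\ra u^*))$, and $(BE_4)$ swaps the two innermost implications, so both sides coincide. I do not expect any serious obstacle anywhere; the main bookkeeping risk is simply choosing correctly between Lemma~\ref{qbe-10}(3), (5), (6), (7), (8) at each step and keeping careful track of stars through the involution.
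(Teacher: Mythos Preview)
Your proposal is correct. For items (8) and (9) it is essentially the paper's own argument: the paper also reduces (8) to the chain $z\ra(x\ra y^*)=x\ra(z\ra y^*)=x\ra z^*=z^*$ (written in starred form) and proves (9) via Lemma~\ref{qbe-10}(7) followed by $(BE_4)$, exactly as you do. For items (1)--(7) the paper simply cites earlier work, so your explicit arguments go beyond what the paper displays; they are all sound, including the one point worth watching---antisymmetry of $\le_Q$---where your ``same collapsing trick'' indeed works because $x\le_Q y$ gives $x=y\wedge_Q x$ by (1) while $y\le_Q x$ gives $y=y\wedge_Q x$ by definition.
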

\begin{proof}
$(1)$-$(6)$ See \cite{Ciu83, Ciu84}. \\
$(7)$ Since $z\le_L x,y$ implies $z=(z\ra x^*)^*=(z\ra y^*)^*$, we get 
$(z\ra (x\ra y^*))^*=(x\ra (z\ra y^*))^*=(x\ra z^*)^*=(z\ra x^*)^*=z$; hence, $z\le_L (x\ra y^*)^*$. \\
$(8)$ Using Lemma \ref{qbe-10}$(7)$, we have: \\
$\hspace*{2.10cm}$ $(x\ra y^*)^*\ra (z\ra u^*)=x\ra (y\ra (z\ra u^*))$ \\
$\hspace*{6.00cm}$ $=x\ra (z\ra (y\ra u^*))$ \\
$\hspace*{6.00cm}$ $=(x\ra z^*)^*\ra (y\ra u^*)$. 
\end{proof}

An \emph{ortholattice} (OL for short) is an algebra $(X,\wedge,\vee,^*,0,1)$ such that $(X,\wedge,\vee,0,1)$ is a bounded lattice and 
the unary operation $^*$ satisfies the following properties for all $x,y\in X$: 
$(OL_1)$ $(x^*)^*=x$ (double negation); $(OL_2)$ $(x\vee y)^*=x^*\wedge y^*$ and $(x\wedge y)^*=x^*\vee y^*$ 
(De Morgan laws); $(OL_3)$ $x\wedge x^*=0$ (noncontradiction principle); $(OL_4)$ $x\vee x^*=1$ (excluded midle 
principle). 
An \emph{orthomodular lattice} (OML for short) is an ortholattice $(X,\wedge,\vee,^{\prime},0,1)$ verifying: \\
$(OM)$ $(x\wedge y)\vee ((x\wedge y)^{\prime}\wedge x)=x$ for all $x,y\in X$, or, equivalently \\ 
$(OM^{\prime})$ $x\vee (x^{\prime}\wedge y)=y$, whenever $x\le y$ (where $x\le y$ iff $x=x\wedge y$) \cite{PadRud}. \\
The implicative property is old in the literature, it was studied for BCK algebras in 1978 by K. Is$\rm\acute{e}$ki  and S. Tanaka. 

\begin{definition} \label{iol-10} \cite[Def. 2.2.1]{Ior35}
\emph{
A BE algebra is called \emph{implicative} if it satisfies the following condition for all $x,y\in X$: \\
$(impl)$ $(x\ra y)\ra x=x$. 
}
\end{definition}

\begin{definition} \label{iol-10-10} \cite[Def. 3.4.3]{Ior35}
\emph{
An implicative involutive BE algebra is called an \emph{implicative-ortholattice} (i-OL for short). 
}
\end{definition}

\begin{remark} \label{qbe-10-10}
As we mentioned, the relation $\le_L$ was introduced earlier in \cite[page 213]{Ior35} under the name $\le^P$ 
totgether with the binary relations: \\
$\hspace*{1.50cm}$ for all $x, y\in X$, $x\wedge^P y:=(x\ra y^*)^*$ $\hspace*{0.1cm}$ and 
$\hspace*{0.1cm}$ $x\vee^P y:=(x^*\wedge^P y^*)^*=x^*\ra y$. \\
Moreover, in an implicative-ortholattice $(X,\ra,^*,1)$, the binary relation $\le^P$ is a lattice order, the 
bounded lattice being $(X,\wedge^P,\vee^P,0,1)$ \cite[Thm. 3.4.6]{Ior35}. 
\end{remark}

\begin{remark} \label{qbe-10-20}
A. Iorgulescu introduced in \cite{Ior30,Ior35} the notion of involutive m-BE algebras as algebras of the form $(X,\odot,^*,1)$ satisfying the following conditions for all $x,y,z\in X$: 
$(PU)$ $1\odot x=x$; $(Pcomm)$ $x\odot y=y\odot x$; $(Pass)$ $x\odot (y\odot z)=(x\odot y)\odot z$; 
$(m$-$La)$ $x\odot 0=0$; $(m$-$Re)$ $x\odot x^*=0$; $(DN)$ $(x^*)^*=x$, where $0=1^*$. 
The ortholattices were redefined as involutive m-BE algebras verifying $(m$-$Pimpl)$ $((x\odot y^*)^*\odot x^*)^*=x$ \cite[Def. 9.2.17]{Ior35}). 
Using the two mappings $\Phi$ and $\Psi$: $ x\odot y:=(x\ra y^*)^*$ and $x\ra y:=(x\odot y^*)^*$, it follows 
from \cite[Thm. 17.1.1]{Ior35} that the implicative-ortholattices are term-equivalent to the above redefined 
ortholattices, since involutive BE algebras are term-equivalent to involutive m-BE algebras and 
$(impl)\Longleftrightarrow (m$-$Pimpl)$. 
\end{remark}

\begin{remark} \label{qbe-10-30} Note that \cite[Thm. 17.1.1]{Ior35}$(3)$,$(3^{\prime}$,$(3^{\prime\prime})$: \\
$-$ $x\le y$ $\Longleftrightarrow$ $x\le_m y$, where $x\le_m y$ iff $x\odot y^*=0$ \cite[page 295]{Ior35}; \\
$-$ $x\le_Q y$ $\Longleftrightarrow$ $x\le^M_m y$, where $x\le^M_m y$ iff $x\wedge^M_m=x$ 
$\Longleftrightarrow$ $y\odot (y\odot x^*)^*=x$ \cite[page 314]{Ior35}; \\ 
$-$ $x\le_L y$ $\Longleftrightarrow$ $x\le^P_m y$, where $x\le^P_m y$ iff $x\wedge^P_m y=x$ 
               $\Longleftrightarrow$ $x\odot y=x$ \cite[page 309]{Ior35}.                                          
\end{remark}

\begin{lemma} \label{iol-30} \cite{Ciu83} Let $(X,\ra,^*,1)$ be an implicative-ortholattice. 
Then $X$ verifies the following axioms for all $x,y\in X$: \\
$(iG)$       $x^*\ra x=x$, or equivalently, $x\ra x^*=x^*;$ \\
$(Iabs$-$i)$ $(x\ra (x\ra y))\ra x=x;$ \\
$(pi)$       $x\ra (x\ra y)=x\ra y$, \\
where $(iG)\Longleftrightarrow (G)$ $x\odot x=x$.  
\end{lemma}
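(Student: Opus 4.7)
The plan is to derive each of $(iG)$, $(Iabs\text{-}i)$, and $(pi)$ as direct consequences of the implicative axiom $(impl)$: $(x\to y)\to x=x$, combined with involutivity and the identities already recorded in Lemma~\ref{qbe-10}. Each identity is essentially a clever substitution followed by a rewrite, so I would present them in the order $(iG)$, then $(Iabs\text{-}i)$, then $(pi)$ (since $(pi)$ will reuse $(iG)$), and finally the equivalence $(iG)\Longleftrightarrow (G)$ via the translation $\Phi$.

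For $(iG)$, I would simply substitute $y:=0$ in $(impl)$ to get $(x\to 0)\to x=x$, i.e.\ $x^*\to x=x$. The equivalent form $x\to x^*=x^*$ is obtained by applying the same identity to $x^*$ in place of $x$ and invoking involutivity $x^{**}=x$; alternatively, one can invoke Lemma~\ref{qbe-10}(5), $x^*\to y=y^*\to x$, to transport between the two forms. For $(Iabs\text{-}i)$, the substitution $y:=x\to y$ in $(impl)$ is immediate: $(x\to(x\to y))\to x=x$, so nothing further is needed.

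The only step that requires a little more than a substitution is $(pi)$. Here the plan is to appeal to Lemma~\ref{qbe-10}(8), $x\to(y\to z)=(x\to y^*)^*\to z$, with $y:=x$ and $z$ equal to the original $y$. This gives
\[
x\to(x\to y)=(x\to x^*)^*\to y,
\]
and applying $(iG)$ in its second form $x\to x^*=x^*$ together with $x^{**}=x$ collapses $(x\to x^*)^*$ to $x$, yielding the right-hand side $x\to y$. Finally, for the equivalence $(iG)\Longleftrightarrow (G)$, I would use the mapping $\Phi$ from \cite[Thm.~17.1.1]{Ior35}, $x\odot y:=(x\to y^*)^*$: then $x\odot x=(x\to x^*)^*$, so $x\odot x=x$ holds iff $x\to x^*=x^*$, which is exactly $(iG)$.

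There is no substantive obstacle here; the only point that might momentarily trip one up is recognizing that $(pi)$ is not immediate from $(impl)$ alone but needs the associativity-style identity of Lemma~\ref{qbe-10}(8) to reduce the left iteration $x\to(x\to y)$ to a single implication, at which stage $(iG)$ does the remaining work. In particular, $(pi)$ is precisely the implicative shadow of the idempotence $x\odot x=x$, matching the equivalence $(iG)\Longleftrightarrow (G)$.
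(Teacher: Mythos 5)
Your derivation is correct: $(iG)$ and $(Iabs$-$i)$ are indeed immediate substitution instances of $(impl)$ (with $y:=0$ and $y:=x\ra y$ respectively), and $(pi)$ follows from Lemma~\ref{qbe-10}$(8)$ together with $(iG)$ exactly as you describe; the translation of $(G)$ via $\Phi$ is also right. The paper itself does not prove this lemma --- it is recalled from \cite{Ciu83} with only a citation --- so there is no in-paper argument to compare against, but your proof is self-contained, uses only facts available in the preliminaries, and is surely the intended one.
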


\begin{remarks} \label{iol-30-05} $(1)$ Let $(X,\ra,^*,1)$ be an involutive BE algebra. Then $\le_L$ is an order relation on $X$ if and only if $X$ satisfies condition $(iG)$. 
Indeed, by Proposition \ref{qbe-20}$(7)$, $X$ is antisymmetric and transitive. 
Moreover, $x\le_L x$ if and only if $x=(x\ra x^*)^*$, that is, $\le_L$ is reflexive if and only if $X$ satisfies $(iG)$. \\
$(2)$ If $X$ is implicative, then $\le_L$ is an order relation on $X$. 
\end{remarks}

\begin{lemma} \label{iol-30-10} \cite{Ciu83}
Let $(X,\ra,^*,1)$ be an involutive BE algebra. The following are equivalent: \\
$(a)$ $X$ is implicative; \\
$(b)$ $X$ verifies axioms $(iG)$ and $(Iabs$-$i)$; \\ 
$(c)$ $X$ verifies axioms $(pi)$ and $(Iabs$-$i)$. 
\end{lemma}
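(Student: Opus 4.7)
The plan is to prove the cycle $(a)\Rightarrow (b)\Rightarrow (c)\Rightarrow (a)$; each step is short once one notices that Lemma \ref{qbe-10}$(8)$ is the mechanism converting $(iG)$ into $(pi)$.

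For $(a)\Rightarrow (b)$, I would simply specialise $(impl)$. Setting $y=0$ in $(x\ra y)\ra x=x$ gives $x^*\ra x=x$, which is $(iG)$. Replacing $y$ by $x\ra y$ in $(impl)$ gives $(x\ra (x\ra y))\ra x=x$, which is $(Iabs$-$i)$.

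For $(b)\Rightarrow (c)$, I need to derive $(pi)$ from $(iG)$, since $(Iabs$-$i)$ is already assumed. The idea is to apply Lemma \ref{qbe-10}$(8)$ with $y:=x$ and $z:=y$: this yields
\[
x\ra (x\ra y)=(x\ra x^*)^*\ra y.
\]
Now $(iG)$ in its equivalent form states $x\ra x^*=x^*$; since $X$ is involutive, $(x\ra x^*)^*=x^{**}=x$, so the right-hand side collapses to $x\ra y$, which is exactly $(pi)$.

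For $(c)\Rightarrow (a)$, the two axioms of (c) combine in a single line: $(x\ra y)\ra x=(x\ra (x\ra y))\ra x=x$, where the first equality is $(pi)$ and the second is $(Iabs$-$i)$. This gives $(impl)$ and closes the cycle.

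The only non-routine point is recognising that $(pi)$ is a direct consequence of $(iG)$ via the identity in Lemma \ref{qbe-10}$(8)$; everything else is a substitution or a two-step rewrite. Hence I do not expect any real obstacle, as Lemma \ref{qbe-10} was deliberately recorded with (7)–(8) in the preliminaries for precisely this kind of manipulation.
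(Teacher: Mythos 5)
Your proof is correct. The paper itself gives no proof of this lemma (it is simply cited from \cite{Ciu83}), so there is nothing to compare against; but your cycle $(a)\Rightarrow(b)\Rightarrow(c)\Rightarrow(a)$ is sound: the specializations $y:=0$ and $y:=x\ra y$ in $(impl)$ do yield $(iG)$ and $(Iabs$-$i)$; Lemma \ref{qbe-10}$(8)$ with $y:=x$, $z:=y$ together with $x\ra x^*=x^*$ and involutivity does give $(pi)$; and $(pi)$ plus $(Iabs$-$i)$ recovers $(impl)$ in one line. (The use of $y=0$ is legitimate since involutive BE algebras are bounded by definition in this paper.)
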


\begin{lemma} \label{iol-30-20} 
Let $X$ be an implicative-ortholattice. The following hold for all $x,y,z,u\in X$: \\ 
$(1)$ $x\le_L y$ iff $y^*\le_L x^*;$ 
$(2)$ $x\le_Q y$ implies $x\le_L y;$ \\
$(3)$ $x\le_L y\ra x$ and $x\le_L x^*\ra y;$ 
$(4)$ $(x\ra y^*)^*\le_L x, y;$ \\
$(5)$ $x^*=x\ra y$ iff $y^*=y\ra x;$ 
$(6)$ $x\wedge_Q y=x$ implies $x\wedge_Q y^*=0;$ \\
$(7)$ $x\le_L y$ implies $x\wedge_Q y^*=0$. \\
If $x\le_L y$, then: \\
$(8)$ $y\ra z\le_L x\ra z$ and $z\ra x\le_L z\ra y;$\\
$(9)$ $x\vee_Q z\le_L y\vee_Q z$ and $x\wedge_Q z\le_L y\wedge_Q z;$ \\
$(10)$ if $x,y\le_L z$, then $x^*\ra y\le_L z;$ \\
$(11)$ $(x\ra y^*)\ra (x\ra y)^*\le_L x;$ \\
$(12)$ if $x\le_L y$ and $z\le_L u$, then $x^*\ra z\le_L y^*\ra u$. 
\end{lemma}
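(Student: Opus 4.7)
The plan is to work through the twelve claims in order, exploiting the multiple equivalent reformulations of $x\le_L y$: by definition $x=(x\ra y^*)^*$, equivalently $x^*=x\ra y^*=y\ra x^*$ (Lemma \ref{qbe-10}(3)), and, once (1) is in hand, also $y^*\ra x=y$ and (via Lemma \ref{qbe-10}(5)) $x^*\ra y=y$. For (1), $(impl)$ in the form $(y\ra x^*)\ra y=y$ together with $y\ra x^*=x^*$ yields $x^*\ra y=y$; Lemma \ref{qbe-10}(5) converts this to $y^*\ra x=y$, which is exactly $y^*\le_L x^*$. For (2), starting from $x^*=(x^*\ra y^*)\ra y^*$, I set $u:=x^*\ra y^*$, and a short chain using $(BE_4)$ and $(iG)$ collapses $y\ra x^*=y\ra(u\ra y^*)=u\ra(y\ra y^*)=u\ra y^*=x^*$, delivering $x\le_L y$.

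Items (3)--(5) are short applications of $(impl)$ together with Lemma \ref{qbe-10}. For (3), $(impl)$ gives $(x^*\ra y)\ra x^*=x^*$ directly, and Lemma \ref{qbe-10}(3) repackages this as $x\ra(x^*\ra y)^*=x^*$, i.e.\ $x\le_L x^*\ra y$; the other half rewrites $y\ra x=x^*\ra y^*$ via Lemma \ref{qbe-10}(6) and then applies $(impl)$. For (4), the computation $(x\ra y^*)^*\ra x^*=x\ra(y\ra x^*)=y\ra(x\ra x^*)=y\ra x^*=x\ra y^*$, using Lemma \ref{qbe-10}(7), $(BE_4)$, $(iG)$, Lemma \ref{qbe-10}(3), shows $(x\ra y^*)^*\le_L x$; the $\le_L y$ side is analogous, using $(iG)$ in the form $y\ra y^*=y^*$. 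For (5), $(impl)$ applied to the rewritten form $y^*\ra x^*=x^*$ gives $x^*\ra y^*=y^*$, which by Lemma \ref{qbe-10}(6) is $y\ra x=y^*$.

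The central identity driving (6)--(8) is the enhanced characterization $x\le_L y\Longleftrightarrow x^*\ra y=y$ (from (1) and Lemma \ref{qbe-10}(5)). Using it, (7) reduces to $x\wedge_Q y^*=((x^*\ra y)\ra y)^*=(y\ra y)^*=0$, and (6) then follows from (2) combined with (7). The main work lies in proving the monotone direction (8b): I want $(z\ra y)^*\ra(z\ra x)=z\ra y$, and the chain Lemma \ref{qbe-10}(7) $\to$ $(BE_4)$ $\to$ $(pi)$ rewrites the left side as $z\ra(y^*\ra(z\ra x))=z\ra(z\ra(y^*\ra x))=z\ra(y^*\ra x)$, after which $y^*\ra x=y$ closes the calculation. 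The antitone direction (8a) follows by dualizing: (1) converts $x\le_L y$ into $y^*\le_L x^*$, then (8b) gives $z\ra y^*\le_L z\ra x^*$, and Lemma \ref{qbe-10}(3) (after renaming $z$ to $z^*$) produces $y\ra z\le_L x\ra z$. This (8) step---finding the right order of rewrites so that $(pi)$ can collapse the double implication---is the main obstacle of the lemma.

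The remaining items are direct consequences. (9) is two applications of (8a): once to move from $x\le_L y$ to $y\ra z\le_L x\ra z$, once more to $x\vee_Q z=(x\ra z)\ra z\le_L(y\ra z)\ra z=y\vee_Q z$; the $\wedge_Q$ case dualizes via (1). (10) is the three-line computation $z^*\ra(x^*\ra y)=x^*\ra(z^*\ra y)=x^*\ra z=z$ using $(BE_4)$ and the identities $z^*\ra y=z$, $x^*\ra z=z$ from $y,x\le_L z$. For (11), the key observation is that $(x\ra y^*)\ra(x\ra y)^*=u^*\ra v$ where $u:=(x\ra y^*)^*$ and $v:=(x\ra y)^*$; item (4) gives $u,v\le_L x$, and (10) then immediately delivers $u^*\ra v\le_L x$. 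Finally, (12) chains (8a) (applied to $y^*\le_L x^*$ from (1)) with (8b): $x^*\ra z\le_L y^*\ra z\le_L y^*\ra u$, and the transitivity of $\le_L$ finishes.
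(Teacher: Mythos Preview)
Your proof is correct and follows essentially the same approach as the paper's, relying on the same reformulations of $\le_L$ and the same toolkit of $(impl)$, $(iG)$, $(pi)$, and Lemma~\ref{qbe-10}. The only noteworthy differences are local: you derive (6) from (2)+(7) rather than directly, and for (8) you prove the covariant half first via $(pi)$ and the characterization $a\le_L b\iff b^*\ra a=b$, whereas the paper proves the contravariant half first by establishing $x\ra z=x\ra(y\ra z)$ and then applying $(impl)$---but these are reorderings rather than genuinely different arguments.
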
 
\begin{proof}
$(1)$ Since $x\le_L y$, we have $x^*=x\ra y^*$; therefore, by $(impl)$, we obtain 
$y^*\ra x=x^*\ra y=(x\ra y^*)\ra y=(y\ra x^*)\ra y=y$, that is, $y^*\le_L x^*$. 
The converse follows similarly. \\
$(2)$ From $x\le_Q y$ we have $x=x\wedge_Q y$, and by $(iG)$ we get  
$x\ra y^*=(x\wedge_Q y)\ra y^*=y\ra (x\wedge_Q y)^*=y\ra ((x^*\ra y^*)\ra y^*)=(x^*\ra y^*)\ra (y\ra y^*)=
(x^*\ra y^*)\ra y^*=(x\wedge_Q y)^*=x^*$. It follows that $x\le_L y$. \\
$(3)$ Applying $(impl)$, we have $x\ra (y\ra x)^*=(y\ra x)\ra x^*=(x^*\ra y^*)\ra x^*=x^*$ and 
$x\ra (x^*\ra y)^*=(x^*\ra y)\ra x^*=x^*$, so $x\le_L y\ra x$ and $x\le_L x^*\ra y$. \\
$(4)$ It follows from $x^*\le_L x\ra y^*$ and $y^*\le_L x\ra y^*$, applying $(1)$. \\
$(5)$ If $x^*=x\ra y$, using $(impl)$ we get: 
$y\ra x=y\ra (x\ra y)^*=(x\ra y)\ra y^*=(y^*\ra x^*)\ra y^*=y^*$. 
Conversely, if $y^*=y\ra x$, then $x\ra y=x\ra (y\ra x)^*=(y\ra x)\ra x^*=(x^*\ra y^*)\ra x^*=x^*$. \\
$(6)$ If $x\wedge_Q y=x$, then $(x^*\ra y^*)\ra y^*=x^*$, so $x^*=y\ra (y\ra x)^*$. Using $(impl)$, we get: 
$x\wedge_Q y^*=((x^*\ra y)\ra y)^*=(((y\ra (y\ra x)^*)\ra y)\ra y)^*=(y\ra y)^*=0$. \\
$(7)$ Since $x\le_L y$ implies $x^*=x\ra y^*$, the proof is similar to $(6)$. \\
$(8)$ Since $x\le_L y$, we have $x^*=x\ra y^*$, so \\
$\hspace*{2.00cm}$ $x\ra z=z^*\ra x^*=z^*\ra (x\ra y^*)=x\ra (z^*\ra y^*)=x\ra (y\ra z)$. \\
Applying $(impl)$, it follows that: \\
$\hspace*{1.00cm}$ $((y\ra z)\ra (x\ra z)^*)^*=((x\ra z)\ra (y\ra z)^*)^*$ \\
$\hspace*{5.00cm}$ $=((x\ra (y\ra z))\ra (y\ra z)^*)^*$ \\
$\hspace*{5.00cm}$ $=(((y\ra z)^*\ra x^*)\ra (y\ra z)^*)^*$ \\
$\hspace*{5.00cm}$ $=(y\ra z)^{**}=y\ra z$, \\
hence, $y\ra z\le_L x\ra z$. 
Moreover, $x\le_L y$ implies $y^*\le_L x^*$, so $x^*\ra z^*\le_L y^*\ra z^*$, that is, $z\ra x\le_L z\ra y$. \\
$(9)$ From $x\le_L y$, using $(8)$, we get $y\ra z\le_L x\ra z$ and $(x\ra z)\ra z\le_L (y\ra z)\ra z$, so   
$x\vee_Q z\le_L y\vee_Q z$. 
Moreover, $y^*\le_L x^*$ implies $y^*\vee_Q z^*\le_L x^*\vee_Q z^*$ hence, $(x^*\vee_Q z^*)^*\le_L (y^*\vee_Q z^*)^*$,  
that is, $x\wedge_Q z\le y\wedge_Q z$. \\
$(10)$ $x,y\le_L z$ implies $z^*\le_L x^*,y^*$, so from $(8)$ and $(iG)$ we have  
$x^*\ra y\le_L z^*\ra y\le_L z^*\ra z=z$. \\
$(11)$ It follows from $(10)$, since $(x\ra y^*)^*\le_L x$ and $(x\ra y)^*\le_L x$. \\
$(12)$ From $x\le_L y$ we have $y^*\le_L x^*$; hence, $x^*\ra z\le_L y^*\ra z\le_L y^*\ra u$.  
\end{proof}

\begin{lemma} \label{ioml-30} Let $X$ be an involutive BE algebra. 
The following are equivalent: \\
$(IOM)$ $x\wedge_Q (y\ra x)=x$ for all $x,y\in X;$ \\
$(IOM^{\prime})$ $x\wedge_Q (x^*\ra y)=x$ for all $x,y\in X;$ $\hspace*{5cm}$ \\
$(IOM^{\prime\prime})$ $x\vee_Q (x\ra y)^*=x$ for all $x,y\in X$. 
\end{lemma}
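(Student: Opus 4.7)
The three conditions should be equivalent by straightforward substitutions using the involutivity of $X$ together with the identities from Lemma~\ref{qbe-10}(3),(5),(6) and the De Morgan duality
$x\wedge_Q y=(x^*\vee_Q y^*)^*$ (equivalently, $x^*\wedge_Q z=(x\vee_Q z^*)^*$) from Proposition~\ref{qbe-20}(3). The plan is to establish a cycle $(IOM)\Ri (IOM^{\prime})\Ri (IOM^{\prime\prime})\Ri (IOM)$.

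First, $(IOM)\Ri (IOM^{\prime})$: apply $(IOM)$ with $y^*$ in place of $y$ to get $x\wedge_Q (y^*\ra x)=x$, and then use Lemma~\ref{qbe-10}(5) ($y^*\ra x=x^*\ra y$) to rewrite the right factor as $x^*\ra y$. Conversely, starting from $(IOM^{\prime})$ and substituting $y$ by $y^*$, Lemma~\ref{qbe-10}(6) turns $x^*\ra y^*$ into $y\ra x$, recovering $(IOM)$.

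Next, $(IOM^{\prime})\Ri (IOM^{\prime\prime})$: apply $(IOM^{\prime})$ with $x^*$ in place of $x$, obtaining $x^*\wedge_Q (x\ra y)=x^*$ (after using $x^{**}=x$). By the De Morgan identity from Proposition~\ref{qbe-20}(3), $x^*\wedge_Q (x\ra y)=(x\vee_Q (x\ra y)^*)^*$, so taking stars yields $x\vee_Q(x\ra y)^*=x$, which is $(IOM^{\prime\prime})$.

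Finally, $(IOM^{\prime\prime})\Ri (IOM)$: in $(IOM^{\prime\prime})$ substitute $x\mapsto x^*$ and $y\mapsto y^*$ to obtain $x^*\vee_Q (x^*\ra y^*)^*=x^*$, then rewrite $x^*\ra y^*$ as $y\ra x$ via Lemma~\ref{qbe-10}(6). Taking stars and applying Proposition~\ref{qbe-20}(3) again converts $x^*\vee_Q(y\ra x)^*=x^*$ into $x\wedge_Q (y\ra x)=x$. The only care required is the bookkeeping of variable substitutions and the correct application of the dual identity; no computation beyond invoking the listed lemmas is needed, and there is no substantial obstacle.
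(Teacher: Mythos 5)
Your proof is correct: each substitution and each appeal to Lemma \ref{qbe-10}(5),(6) and to the De Morgan identity $x\wedge_Q y=(x^*\vee_Q y^*)^*$ checks out, and the cycle $(IOM)\Ri(IOM^{\prime})\Ri(IOM^{\prime\prime})\Ri(IOM)$ establishes the equivalence. The paper dismisses this lemma as ``straightforward'' and gives no details, so your argument is precisely the routine verification the author had in mind.
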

\begin{proof}
The proof is straightforward. 
\end{proof}

\begin{definition} \label{ioml-30-10} \cite{Ciu83}
\emph{
An \emph{implicative-orthomodular lattice} (i-OML for short) is an implicative-ortholattice satisfying one of 
the equivalent conditions from Lemma \ref{ioml-30}.
}
\end{definition}

\begin{remark} \label{ioml-30-15} 
A. Iorgulescu redefined the orthomodular lattices as ortholattices $(A,\odot,^*,1)$ satisfying 
condition $(Pom)$ $(x\odot y)\oplus ((x\odot y)^*\odot x)=x$, where $x\oplus y=(x^*\odot y^*)^*$. 
In other words, an orthomodular lattice is an involutive m-BE algebra satisfying $(m$-$Pimpl)$ and $(Pom)$ 
\cite[Def. 13.1.4]{Ior35}.   
By \cite[Thm. 17.1.1]{Ior35}, we can prove that $(IOM)\Longleftrightarrow (Pom)$, and, hence, the implicative-orthomodular lattices are term-equivalent to the above redefined orthomodular lattices. 
\end{remark}

\begin{remark} \label{ioml-30-20} 
The implicative-ortholattices $(X,\ra,^*,1)$ are term-equivalent to ortholattices $(X,\wedge,\vee,^{\prime},0,1)$, by the mutually inverse transformations: \\
$\hspace*{3cm}$ $\varphi:$\hspace*{0.2cm}$ x\ra y:=(x\wedge y^{\prime})^{\prime}$ $\hspace*{0.1cm}$ and  
                $\hspace*{0.1cm}$ $\psi:$\hspace*{0.2cm}$ x\wedge y:=(x\ra y^*)^*$, \\
and the relation: for all $x, y\in X$, $x\vee y:=(x^{\prime}\wedge y^{\prime})^{\prime}=x^*\ra y$. 
The unary operation ``$^*$" is defined by: for all $x\in X$, $x^*:=x\ra 0=(x\wedge 0^{\prime})^{\prime}=x^{\prime}$. 
Note that $\le_L$ is the partial order of the lattice $(X,\wedge,\vee,^*,0,1)$. 
Also, by these transformations the implicative-orthomodular lattices are term-equivalent to orthomodular lattices.
\end{remark}

Due to this term-equivalence, the implicative-orthomodular lattices can also be regarded as models for 
quantum logic, and the properties and characterizations of orthomodular lattices can be reformulated and   
established for implicative-orthomodular lattices. 
We review several results on implicative-orthomodular lattices and establish new ones that will be used in the 
next sections. We will see that the connections between relations $\le$, $\le_L$ and $\le_Q$ play a crucial role in 
establishing these results and, consequently, in proving the main results of the paper. 

\begin{theorem} \label{dioml-05-50} \cite{Ciu84} Let $X$ be an implicative-ortholattice. 
The following are equivalent: \\
$(a)$ $X$ is an implicative-orthomodular lattice; \\
$(b)$ $\le_L$ $\subseteq$ $\le_Q;$ \\
$(c)$ $x\le_L y$ implies $y=y\vee_Q x$.  
\end{theorem}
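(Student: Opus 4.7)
The plan is to prove the cycle $(a)\Rightarrow(b)\Rightarrow(c)\Rightarrow(a)$, leaning throughout on parts $(1)$ and $(3)$ of Lemma \ref{iol-30-20} together with the De Morgan-style identity $y^{*}\wedge_{Q}x^{*}=(y\vee_{Q}x)^{*}$, which is immediate from the definitions of $\wedge_{Q}$ and $\vee_{Q}$.

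For $(a)\Rightarrow(b)$, suppose $x\le_{L}y$. Lemma \ref{iol-30-20}$(1)$ dualizes this to $y^{*}\le_{L}x^{*}$, and the definition of $\le_{L}$ then gives $y=y^{*}\ra x$. Now apply $(IOM)$ with the ``$y$''-slot taken to be $y^{*}$: it asserts $x\wedge_{Q}(y^{*}\ra x)=x$, i.e.\ $x=x\wedge_{Q}y$, so $x\le_{Q}y$.

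For $(b)\Rightarrow(c)$, start again from $x\le_{L}y$ and dualize via Lemma \ref{iol-30-20}$(1)$ to $y^{*}\le_{L}x^{*}$; hypothesis $(b)$ upgrades this to $y^{*}\le_{Q}x^{*}$, so $y^{*}=y^{*}\wedge_{Q}x^{*}$. Unfolding the right-hand side gives $y^{*}\wedge_{Q}x^{*}=((y\ra x)\ra x)^{*}=(y\vee_{Q}x)^{*}$, and taking complements yields $y=y\vee_{Q}x$, as required.

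For $(c)\Rightarrow(a)$, fix $x,y\in X$ and set $u:=y\ra x$. Lemma \ref{iol-30-20}$(3)$ supplies $x\le_{L}u$ automatically, so Lemma \ref{iol-30-20}$(1)$ gives $u^{*}\le_{L}x^{*}$. Applying $(c)$ to this last inequality yields $x^{*}=x^{*}\vee_{Q}u^{*}=(x^{*}\ra u^{*})\ra u^{*}$, whence $x\wedge_{Q}u=(x^{*}\vee_{Q}u^{*})^{*}=x$, which is exactly $(IOM)$. I do not anticipate a serious obstacle: the only conceptual point is the De Morgan passage between $\wedge_{Q}$ and $\vee_{Q}$ under the involution, combined with the ``starring'' trick of Lemma \ref{iol-30-20}$(1)$ and the free bound $x\le_{L}y\ra x$ from Lemma \ref{iol-30-20}$(3)$; each of the three implications then collapses to a one-line computation.
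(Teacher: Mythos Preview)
Your argument is correct. The paper does not supply its own proof of this theorem (it is quoted from \cite{Ciu84}), so a direct comparison is not possible, but each of your three implications checks out: in $(a)\Rightarrow(b)$ the key identity $y=y^{*}\ra x$ obtained from $y^{*}\le_{L}x^{*}$ lets you feed $(IOM)$ directly; in $(b)\Rightarrow(c)$ the passage through $y^{*}\wedge_{Q}x^{*}=(y\vee_{Q}x)^{*}$ is just Proposition~\ref{qbe-20}$(3)$; and in $(c)\Rightarrow(a)$ the free inequality $x\le_{L}y\ra x$ from Lemma~\ref{iol-30-20}$(3)$ is exactly what is needed to trigger $(c)$ and recover $(IOM)$.
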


\begin{corollary} \label{dioml-05-55}
If $X$ is an implicative-orthomodular lattice, then $\le_Q = \le_L$. 
\end{corollary}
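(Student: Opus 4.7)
The plan is to obtain the equality as a double inclusion, reading off each inclusion from a result stated earlier in the excerpt. The forward inclusion $\le_L \subseteq \le_Q$ is exactly the content of Theorem \ref{dioml-05-50}, equivalence $(a)\Leftrightarrow(b)$: since $X$ is assumed to be an implicative-orthomodular lattice, we immediately have $x \le_L y \Rightarrow x \le_Q y$. The reverse inclusion $\le_Q \subseteq \le_L$ does not require the orthomodular hypothesis at all — it holds in every implicative-ortholattice by Lemma \ref{iol-30-20}(2).

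Putting these two one-line implications together yields $\le_Q = \le_L$. There is really no obstacle here; the only thing to be careful about is invoking the correct direction of Theorem \ref{dioml-05-50} (the $(a)\Rightarrow(b)$ direction, not the converse) and noting that Lemma \ref{iol-30-20}(2) is available in every i-OL, so it applies in particular to the i-OML $X$. I would write the proof as a two-sentence argument citing exactly these two results.
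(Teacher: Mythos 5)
Your proof is correct and follows exactly the paper's own argument: the inclusion $\le_L\subseteq\le_Q$ from Theorem \ref{dioml-05-50} combined with the inclusion $\le_Q\subseteq\le_L$ from Lemma \ref{iol-30-20}$(2)$, which indeed holds in any implicative-ortholattice. Nothing to add.
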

\begin{proof}
It follows from Theorem \ref{dioml-05-50} and Lemma \ref{iol-30-20}. 
\end{proof}

\begin{proposition} \label{ioml-50} \cite{Ciu83}
Let $X$ be an implicative-orthomodular lattice. Then the following hold for all $x,y,z\in X$: \\
$(1)$ $x\ra (y\wedge_Q x)=x\ra y;$ 
$(2)$ $(x\vee_Q y)\ra (x\ra y)^*=y^*;$ \\ 
$(3)$ $x\wedge_Q ((y\ra x)\wedge_Q (z\ra x))=x;$ 
$(4)$ $(x\ra y)\ra (y\wedge_Q x)=x;$ \\
$(5)$ $x\le_L y$ and $x\le y$ imply $x=y;$ 
$(6)$ $x\wedge_Q y\le_L y\le_L x\vee_Q y;$ \\
$(7)$ $(x\wedge_Q y)\ra (y\wedge_Q x)=1;$ 
$(8)$ $(x\vee_Q y)\ra (y\vee_Q x)=1;$ \\
$(9)$ $(x\vee_Q y)\ra y=x\ra y;$ 
$(10)$ $(x\wedge_Q y)\wedge_Q (y\wedge_Q z)=(x\wedge_Q y)\wedge_Q z$.             
\end{proposition}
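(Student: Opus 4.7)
The plan is to prove all ten identities using three primary tools: the equivalent orthomodular axioms $(IOM)$, $(IOM^{\prime})$, $(IOM^{\prime\prime})$ from Lemma \ref{ioml-30}; the implicative property $(impl)$ together with involution and the identities of Lemma \ref{qbe-10}; and (critically) Corollary \ref{dioml-05-55}, which gives $\le_L = \le_Q$ in an i-OML. The general strategy is to convert each identity into a manipulation involving $\ra$ and $^*$, then apply the appropriate orthomodular axiom to collapse the $\wedge_Q$ or $\vee_Q$ expression.

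Parts $(1)$, $(2)$, $(3)$, $(4)$, and $(9)$ are arrow-identity computations. For $(1)$, I would unfold $y\wedge_Q x = ((y^*\ra x^*)\ra x^*)^*$, apply the involutive identity $x\ra z^* = z\ra x^*$, and use $(impl)$ to collapse the resulting nested expression $((y^*\ra x^*)\ra x^*)\ra x^*$ down to $y^*\ra x^* = x\ra y$. For $(2)$, I would expand $x\vee_Q y = (x\ra y)\ra y$ and apply $(impl)$ together with Lemma \ref{qbe-10}(3) to reduce $((x\ra y)\ra y)\ra (x\ra y)^*$ to $y^*$. Part $(4)$ is essentially orthomodularity restated: applying $(IOM)$ with a suitable substitution (e.g.\ sending $y$ to $x\ra y$) and then dualizing via involution yields the identity. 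Part $(3)$ uses two successive applications of $(IOM)$ -- first with $(y\ra x)$, then with $(z\ra x)$ -- while $(9)$ reduces to a single use of $(IOM^{\prime\prime})$ once $x\le_L x\vee_Q y$ is noted.

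Items $(5)$--$(8)$ and $(10)$ rely on the order coincidence. Part $(5)$ combines antisymmetry of $\le_L$ (Proposition \ref{qbe-20}(7)) with Corollary \ref{dioml-05-55} and the fact $\le_Q\subseteq\le$ from Proposition \ref{qbe-20}(4). For $(6)$, I would verify $x\wedge_Q y\le_Q y$ directly from the definition of $\wedge_Q$, at which point Corollary \ref{dioml-05-55} upgrades this to $\le_L$, and dually for $y\le_L x\vee_Q y$. Parts $(7)$ and $(8)$ then follow by unfolding $\le$ via Lemma \ref{qbe-10} and the axiom $(iG)$. Finally, $(10)$ is obtained by establishing two $\le_L$ inequalities, using monotonicity of $\wedge_Q$ (Lemma \ref{iol-30-20}(9)) together with part $(6)$, and invoking antisymmetry. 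The main obstacle will be pinpointing the precise substitution patterns for $(IOM)$ in $(3)$, $(4)$, and $(10)$; once the correct substitution is identified each manipulation is routine but bookkeeping-heavy, and Corollary \ref{dioml-05-55} plays the pivotal role throughout of letting orthomodularity propagate between $\le_L$ and $\le_Q$ without which these arrow-based identities could not be derived.
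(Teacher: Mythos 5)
First, note that this paper contains no proof of Proposition \ref{ioml-50}: it is imported from \cite{Ciu83}, so there is no in-text argument to measure your plan against, and it must be judged on its own terms. Your overall strategy --- reduce everything to the equivalent orthomodularity axioms of Lemma \ref{ioml-30} together with the coincidence $\le_L=\le_Q$ from Corollary \ref{dioml-05-55} --- is the right one, and your treatment of $(2)$, $(4)$ and $(6)$ is essentially sound: $(2)$ is $(IOM)$ unfolded via $x^*\ra(y\ra x)^*=(y\ra x)\ra x$, $(4)$ is $(2)$ applied to the pair $(y^*,x^*)$, and both halves of $(6)$ already hold in an arbitrary i-OL, since $y\le_L x\vee_Q y$ follows from $(impl)$ and $x\wedge_Q y\le_L y$ is its dual under Lemma \ref{iol-30-20}$(1)$.

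Three of your specific claims, however, are wrong or incomplete. For $(1)$ you assert that $(impl)$ collapses $((y^*\ra x^*)\ra x^*)\ra x^*$ to $y^*\ra x^*$; it does not --- that identity is exactly item $(9)$ with $x,y$ replaced by $y^*,x^*$, and it fails in the benzene ring of Example \ref{dioml-60}, where $((a\ra d)\ra d)\ra d=d\neq c=a\ra d$. So orthomodularity is indispensable for $(1)$, and $(1)$ cannot be settled before $(9)$. For $(9)$ itself you invoke ``$x\le_L x\vee_Q y$'', which is false in a general i-OML: by Theorem \ref{cioml-100-10}, that inequality holding for all $x,y$ characterizes i-Boolean algebras. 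The fact you actually need is $y\le_L x\vee_Q y$, equivalently $(x\ra y)\ra y^*=y^*$ (an instance of $(impl)$, since $(x\ra y)\ra y^*=(y^*\ra x^*)\ra y^*$); with it, $(IOM^{\prime\prime})$ applied to the pair $(x\ra y,\,y^*)$ does yield $((x\ra y)\ra y)\ra y=x\ra y$. Finally, for $(10)$ monotonicity of $\wedge_Q$ together with $y\wedge_Q z\le_L z$ gives only $(x\wedge_Q y)\wedge_Q(y\wedge_Q z)\le_L(x\wedge_Q y)\wedge_Q z$; the reverse inequality cannot come from monotonicity (that would require $z\le_L y\wedge_Q z$, which is false), so ``invoking antisymmetry'' conceals the genuinely hard half of the identity. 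A similar vagueness affects $(5)$: as printed the hypotheses must be read as $x\le y$ and $y\le_L x$ (compare Theorem \ref{dioml-05-60}$(c)$ and the way $(5)$ is used in Theorem \ref{cioml-30}), and the workable argument is not antisymmetry of $\le_L$ but rather passing from $y\le_L x$ to $y=y\wedge_Q x$ via Corollary \ref{dioml-05-55} and computing $y\wedge_Q x=((x\ra y)\ra x^*)^*=x$ from $x\ra y=1$.
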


\begin{proposition} \label{ioml-50-10} Let $X$ be an implicative-orthomodular lattice. The following hold 
for all $x, y, z\in X$: \\ 
$(1)$ $x\le_L y$, $x\le_L z$ imply $x\le_L y\wedge_Q z;$ \\
$(2)$ $x\le_L y$ implies $(z\wedge_Q y)\wedge_Q x=z\wedge_Q x;$ \\ 
$(3)$ if $x\le y$, $y\le_L x$ imply $x=y;$ \\
$(4)$ $y\le_L x$, $z\le_L x$ imply $y\vee_Q z\le_L x;$ \\ 
$(5)$ $x\ra (x\wedge_Q y)=x\ra y;$ \\
$(6)$ $x\wedge_Q y^*=0$ implies $x\wedge_Q y=x$. 
\end{proposition}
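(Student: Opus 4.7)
The plan is to leverage the identification $\le_L = \le_Q$ on an implicative-orthomodular lattice (Corollary \ref{dioml-05-55}), which converts each hypothesis of the form $u\le_L v$ into the operational identity $u = u\wedge_Q v$. Combined with the quasi-associativity of $\wedge_Q$ from Proposition \ref{ioml-50}(10) and the commutativity of $\wedge_Q$ and $\vee_Q$ that holds on an i-OML (extracted from Proposition \ref{ioml-50}(7),(8) via antisymmetry of $\le$), all six items reduce to short algebraic manipulations.

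For (1), replacing the hypotheses by $x = x\wedge_Q y = x\wedge_Q z$, the instance of Proposition \ref{ioml-50}(10) with the triple $(x,y,z)$ gives
\[
x\wedge_Q(y\wedge_Q z) = (x\wedge_Q y)\wedge_Q(y\wedge_Q z) = (x\wedge_Q y)\wedge_Q z = x\wedge_Q z = x,
\]
whence $x\le_Q y\wedge_Q z$, and Lemma \ref{iol-30-20}(2) upgrades this to $x\le_L y\wedge_Q z$. The same instantiation supplies (2): it yields $x\wedge_Q(y\wedge_Q z) = x\wedge_Q z$, and applying commutativity of $\wedge_Q$ twice rewrites the left side as $(z\wedge_Q y)\wedge_Q x$ and the right side as $z\wedge_Q x$. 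Item (4) then follows from (1) by involutive duality: Lemma \ref{iol-30-20}(1) converts $y\le_L x$ and $z\le_L x$ into $x^*\le_L y^*$ and $x^*\le_L z^*$, so (1) gives $x^*\le_L y^*\wedge_Q z^*$; dualizing again and invoking Proposition \ref{qbe-20}(3) yields $y\vee_Q z = (y^*\wedge_Q z^*)^*\le_L x$.

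Items (3) and (5) are short. For (3), Lemma \ref{iol-30-20}(6) converts $y\le_L x$ into $y\le x$; together with the hypothesis $x\le y$ and the antisymmetry of $\le$ on an i-OML this forces $x = y$. For (5), commutativity of $\wedge_Q$ rewrites $x\ra(x\wedge_Q y)$ as $x\ra(y\wedge_Q x)$, which equals $x\ra y$ by Proposition \ref{ioml-50}(1).

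For (6), unfolding $x\wedge_Q y^* = ((x^*\ra y)\ra y)^* = 0$ gives $x^*\ra y\le y$; combined with the general inequality $y\le x^*\ra y$ from Lemma \ref{qbe-10}(1) and antisymmetry, one obtains $x^*\ra y = y$. Applying $(impl)$ to this equality produces $y\ra x^* = (x^*\ra y)\ra x^* = x^*$, which by Lemma \ref{qbe-10}(3) is equivalent to $x^* = x\ra y^*$, i.e.\ $x\le_L y$; Corollary \ref{dioml-05-55} then delivers $x = x\wedge_Q y$. The central technical hurdle I anticipate is that several of these steps rest on the antisymmetry of $\le$ on an i-OML (and the consequent commutativity of $\wedge_Q$ and $\vee_Q$), which is not stated explicitly in the preliminaries; I will have to establish it either by appealing to the term-equivalence with orthomodular lattices (Remark \ref{ioml-30-20}) or by a short direct argument from Proposition \ref{ioml-50}(7)--(9), after which the remainder is routine rewriting in the $\wedge_Q$-calculus of Proposition \ref{ioml-50}.
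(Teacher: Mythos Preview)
Your plan rests on two assertions that are \emph{false} in a general implicative-orthomodular lattice: that $\le$ is antisymmetric, and (equivalently, via Proposition~\ref{ioml-50}(7)) that $\wedge_Q$ is commutative. In fact the paper shows later (Theorem~\ref{cioml-100}) that $x\wedge_Q y=y\wedge_Q x$ for all $x,y$ characterises implicative-\emph{Boolean} algebras among i-OMLs, so assuming it here would collapse the statement to the Boolean case. Concretely, in Example~\ref{dioml-60-10} one has $a\ra c=c\ra a=1$ while $a\ne c$ (so $\le$ is not antisymmetric), and $a\wedge_Q c=c\ne a=c\wedge_Q a$. Neither of your proposed workarounds helps: the term-equivalence of Remark~\ref{ioml-30-20} sends the \emph{lattice} meet to $(x\ra y^*)^*$, whereas $x\wedge_Q y=((x^*\ra y^*)\ra y^*)^*$ translates to the Sasaki projection $(x\vee y')\wedge y$, which is not commutative in an OML; and the ``direct argument from Proposition~\ref{ioml-50}(7)--(9)'' would itself require antisymmetry of $\le$.

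This breaks items (2), (3), (5) and (6) of your proof. For (3) you cannot simply combine $x\le y$ with $y\le x$; the paper instead computes $y\wedge_Q x$ two ways (once using $x\le y$, once using $y\le_L x$ together with Proposition~\ref{ioml-50}(1)) and compares. For (5) you cannot swap $x\wedge_Q y$ to $y\wedge_Q x$; the paper gives a direct calculation expanding $x$ via $(impl)$ and then invoking $(IOM)$. For (2) the paper also avoids commutativity, expanding $(z\wedge_Q y)\wedge_Q x$ by hand and using $x=(x\ra y^*)^*$ together with Proposition~\ref{ioml-50}(1). For (6) your idea is close to the paper's, but the passage from $x^*\ra y\le y$ and $y\le_L x^*\ra y$ to equality must go through item~(3) (or Theorem~\ref{dioml-05-60}(c)), not through antisymmetry of $\le$; and then a further computation is needed since $x\le_L y$ yields $x\wedge_Q y=x$ only after some work. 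Your arguments for (1) and (4) are fine and match the paper.
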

\begin{proof}
$(1)$ From $x\le_L y$, $x\le_L z$ we have $x\le_Q y$, $x\le_Q z$; hence, $x=x\wedge_Q y$ and $x=x\wedge_Q z$. Applying Proposition \ref{ioml-50}$(10)$, we get: $x\wedge_Q (y\wedge_Q z)=(x\wedge_Q y)\wedge_Q (y\wedge_Q z)=(x\wedge_Q y)\wedge_Q z=x\wedge_Q z=x$; hence, $x\le_Q y\wedge_Q z$, that is, $x\le_L y\wedge_Q z$.\\
$(2)$ From $x\le_L y$, we have $x=(x\ra y^*)^*$, and Applying Proposition \ref{ioml-50}$(1)$, it follows that: \\
$\hspace*{1.10cm}$ $(z\wedge_Q y)\wedge_Q x=(((z\wedge_Q y)^*\ra x^*)\ra x^*)^*=(((z\wedge_Q y)^*\ra (x\ra y^*))\ra x^*)^*$ \\
$\hspace*{3.00cm}$ $=((x\ra ((z\wedge_Q y)^*\ra y^*))\ra x^*)^*=((x\ra (y\ra (z\wedge_Q y)))\ra x^*)^*$ \\
$\hspace*{3.00cm}$ $=((x\ra (y\ra z))\ra x^*)^*=((x\ra (z^*\ra y^*))\ra x^*)^*$ \\
$\hspace*{3.00cm}$ $=((z^*\ra (x\ra y^*))\ra x^*)^*=((z^*\ra x^*)\ra x^*)^*=z\wedge_Q x$. \\
$(3)$ From $x\le y$, we get $y\wedge_Q x=((y^*\ra x^*)\ra x^*)^*=((x\ra y)\ra x^*)^*=(1\ra x^*)^*=x$. 
On the other hand, since $y\le_L x$, we have $y^*=y\ra x^*$, and we get: \\
$\hspace*{2.00cm}$ $y\wedge_Q x=((y^*\ra x^*)\ra x^*)^*=(((y\ra x^*)\ra x^*)\ra x^*)^*$ \\
$\hspace*{3.00cm}$ $=((y\vee_Q x^*)\ra x^*)^*=(x\ra (y^*\wedge_Q x))^*$ \\
$\hspace*{3.00cm}$ $=(x\ra y^*)^*=y$ (by Prop. \ref{ioml-50}$(1)$). \\
It follows that $x=y$. \\  
$(4)$ From $y, z\le_L x$, we have $x^*\le_L y^*, z^*$, and applying $(1)$, we get $x^*\le_L y^*\wedge_Q z^*=(y\vee_Q z)^*$, 
that is, $y\vee_Q z\le_L x$. \\ 
$(5)$ Since by $(impl)$, $x=((x^*\ra y)\ra x^*)^*=(x\ra (x^*\ra y)^*)^*$, we get: \\
$\hspace*{2.00cm}$ $x\ra (x\wedge_Q y)=x\ra ((x^*\ra y^*)\ra y^*)^*$ \\ 
$\hspace*{4.10cm}$ $=(x\ra (x^*\ra y)^*)^*\ra ((x^*\ra y^*)\ra y^*)^*$ \\ 
$\hspace*{4.10cm}$ $=x\ra ((x^*\ra y^*)\ra ((x^*\ra y^*)\ra y^*)^*)$ (Lemma \ref{qbe-10}$(7)$) \\
$\hspace*{4.10cm}$ $=x\ra (((x^*\ra y^*)\ra y^*)\ra (x^*\ra y^*)^*)$ (Lemma \ref{qbe-10}$(6)$) \\
$\hspace*{4.10cm}$ $=x\ra ((y\ra (x^*\ra y^*)^*)\ra (x^*\ra y^*)^*)$ (Lemma \ref{qbe-10}$(6)$) \\
$\hspace*{4.10cm}$ $=x\ra (y\vee_Q (x^*\ra y^*)^*)$ \\
$\hspace*{4.10cm}$ $=x\ra (y^*\wedge_Q (x^*\ra y^*))^*$ \\
$\hspace*{4.10cm}$ $=x\ra (y^*)^*$ (by $(IOM)$) \\
$\hspace*{4.10cm}$ $=x\ra y$. \\
$(6)$ Assume that $x\wedge_Q y^*=0$; then $(x^*\ra y)\ra y=1$, and therefore $x^*\ra y\le y$. 
On the other hand, $y\le_L x^*\ra y$, and by $(3)$, we get $y=x^*\ra y=y^*\ra x$. 
It follows that: \\
$\hspace*{2.00cm}$ $x\wedge_Q y=((x^*\ra y^*)\ra y^*)^*=((y\ra x)\ra y^*)^*=((x^*\ra y)\ra x)\ra y^*)^*$ \\
$\hspace*{3.00cm}$ $=(((y^*\ra x)\ra x)\ra y^*)^*=((y^*\vee_Q x)\ra y^*)^*$ \\
$\hspace*{3.00cm}$ $=(y\ra (y\wedge_Q x^*))^*=(y\ra x^*)^*$ (by $(5)$). \\
$\hspace*{3.00cm}$ $=((x^*\ra y)\ra x^*)^*=(x^*)^*=x$ (by $(impl)$). 
\end{proof}

Characterizations of implicative-orthomodular lattices are important in the study of quantum logic, as they support the computational analysis of logic and help building proof systems and formal calculi for quantum logic. 
Implication based characterizations often make easier to connect the semantics of quantum logic with 
algorithmic or syntactic frameworks. 
The next theorem offers new characterizations of implicative-orthomodular lattices and plays an important role in proving many of the results presented in this paper. Interestingly, an i-OL is an i-OML precisely when $x\le y$ and $y\le_L x$ entail $x=y$. 

\begin{theorem} \label{dioml-05-60} Let $X$ be an implicative-ortholattice. 
The following are equivalent: \\
$(a)$ $X$ is an implicative-orthomodular lattice; \\ 
$(b)$ $(x\ra y)\ra (y\wedge_Q x)=x$ for all $x,y\in X;$ \\
$(c)$ $x\le y$ and $y\le_L x$ imply $x=y;$ \\
$(d)$ $x\wedge_Q y^*=0$ implies $x\wedge_Q y=x;$ \\
$(e)$ $x\ra (x\wedge_Q y)=x\ra y$ for all $x,y\in X$.   
\end{theorem}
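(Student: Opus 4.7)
The forward implications $(a)\Ri (b), (c), (d), (e)$ are already established as Proposition \ref{ioml-50}$(4)$ and Proposition \ref{ioml-50-10}$(3)$, $(5)$, $(6)$, respectively. For the four converses, my plan is to invoke Theorem \ref{dioml-05-50}, which reduces $(a)$ to the inclusion $\le_L\subseteq\le_Q$: assuming $x\le_L y$, one must deduce $x=x\wedge_Q y$. I will handle $(c)$ and $(d)$ directly via this reduction and then reduce $(b)$ and $(e)$ to $(c)$ by a uniform substitution trick.

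Two generic identities in any i-OL will power the direct arguments. First, $(x\wedge_Q y)\ra x = 1$ for all $x,y$: expanding $(x\wedge_Q y)\ra x$ via Lemma \ref{qbe-10}$(5)$ gives $x^*\ra((x^*\ra y^*)\ra y^*)$, which by $(BE_4)$ and $(BE_1)$ collapses to $(x^*\ra y^*)\ra(x^*\ra y^*)=1$. Second, $x\le_L y$ implies $x\le_L x\wedge_Q y$, since $x\ra(x\wedge_Q y)^* = (x^*\ra y^*)\ra(x\ra y^*) = (x^*\ra y^*)\ra x^* = x^*$, using $(BE_4)$, the hypothesis $x\ra y^* = x^*$, and $(impl)$. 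From these, $(d)\Ri(a)$ is immediate: if $x\le_L y$ then Lemma \ref{iol-30-20}$(7)$ gives $x\wedge_Q y^* = 0$, and $(d)$ yields $x\wedge_Q y = x$. And $(c)\Ri (a)$ will follow by applying $(c)$ to $A = x\wedge_Q y$, $B = x$: the first identity gives $A\le B$, the second gives $B\le_L A$, so $A=B$.

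For $(b)\Ri (c)$ and $(e)\Ri (c)$, I would assume $x\le y$ and $y\le_L x$ and aim at $x = y$. In each case the key step is to establish an orthogonality ($y^*\wedge_Q x = 0$ for $(b)$, $x\wedge_Q y^* = 0$ for $(e)$), then substitute $y\leftarrow y^*$ in the given identity to read off $x\le_L y$; antisymmetry of $\le_L$ (Proposition \ref{qbe-20}$(7)$) then delivers $x = y$. For $(b)$, $y^*\wedge_Q x = ((y\ra x^*)\ra x^*)^* = (y^*\ra x^*)^* = (x\ra y)^* = 0$ using $y\le_L x$, Lemma \ref{qbe-10}$(6)$, and $x\le y$; then $(b)$ with $y\leftarrow y^*$ collapses to $(x\ra y^*)^* = x$, i.e., $x\le_L y$. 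For $(e)$, the relation $y^*\le_L x^*$ (Lemma \ref{iol-30-20}$(1)$), combined with Lemma \ref{iol-30-20}$(8)$ and $(iG)$, gives $x^*\ra y\le_L y^*\ra y = y$, whence $(x^*\ra y)\ra y = 1$, i.e., $x\wedge_Q y^* = 0$; then $(e)$ with $y\leftarrow y^*$ yields $x^* = x\ra y^*$, i.e., $x\le_L y$.

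The main technical obstacle is the pair of generic identities above, especially the second: once they are in place, $(c)\Ri (a)$ becomes a transparent application of Theorem \ref{dioml-05-50}. A secondary subtlety is that the BE order $\le$ is not antisymmetric in a general i-OL, so each converse must be routed through the antisymmetric $\le_L$ at its final step; the substitution $y\leftarrow y^*$ is the uniform device that effects this routing for $(b)$ and $(e)$.
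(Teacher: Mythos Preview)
Your arguments for $(c)\Ri(a)$ and $(d)\Ri(a)$ are correct and essentially coincide with the paper's: both of you show, under $x\le_L y$, that $x\wedge_Q y\le x$ and $x\le_L x\wedge_Q y$, and then invoke Theorem \ref{dioml-05-50}. Your $(b)\Ri(c)$ is also correct, though the paper instead proves $(b)\Ri(a)$ directly by computing $x\vee_Q(x\ra y)^*$ and recognising $(IOM'')$; your detour through $(c)$ is a legitimate alternative.

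There is, however, a genuine slip in your $(e)\Ri(c)$ argument. From $y\le_L x$, Lemma \ref{iol-30-20}$(1)$ yields $x^*\le_L y^*$, not $y^*\le_L x^*$ as you claim; with the correct direction, Lemma \ref{iol-30-20}$(8)$ gives $y^*\ra y\le_L x^*\ra y$, i.e.\ $y\le_L x^*\ra y$, which is the wrong inequality. Your desired conclusion $x\wedge_Q y^*=0$ is nonetheless true, but for a different reason: from $y\le_L x$ one has $y^*=y\ra x^*=x\ra y^*$, whence $y^*\ra x=(x\ra y^*)\ra x=x$ by $(impl)$; therefore $(x^*\ra y)\ra y=(y^*\ra x)\ra y=x\ra y=1$, i.e.\ $x\wedge_Q y^*=0$. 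With this repair your substitution $y\leftarrow y^*$ in $(e)$ then correctly yields $x\ra 0=x\ra y^*$, hence $x\le_L y$, and antisymmetry finishes. (The paper avoids this issue altogether by proving $(e)\Ri(a)$ directly: substituting $x\leftarrow x\ra y$ and $y\leftarrow x^*$ into a consequence of $(e)$ and reading off $(IOM'')$.)
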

\begin{proof}
$(a)\Leftrightarrow (b)$ If $X$ is an i-OML and $x,y\in X$, by Proposition \ref{ioml-50}$(4)$, 
$(x\ra y)\ra (y\wedge_Q x)=x$. 
Conversely, let $X$ be an i-OL such that $(x\ra y)\ra (y\wedge_Q x)=x$ for all $x,y\in X$. 
Then we have: \\
$\hspace*{2.00cm}$ $x\vee_Q (x\ra y)^*=(x\ra (x\ra y)^*)\ra (x\ra y)^*$ \\ 
$\hspace*{4.20cm}$ $=((x\ra y)\ra x^*)\ra (x\ra y)^*$ \\
$\hspace*{4.20cm}$ $=((y^*\ra x^*)\ra x^*)\ra (x\ra y)^*$ \\
$\hspace*{4.20cm}$ $=(y\wedge_Q x)^*\ra (x\ra y)^*$ \\
$\hspace*{4.20cm}$ $=(x\ra y)\ra (y\wedge_Q x)=x$. \\
Hence, $X$ satisfies condition $(IOM^{''})$, that is, $X$ is an i-OML. \\
$(a)\Leftrightarrow (c)$ Let $X$ be an i-OML and let $x,y\in X$ such that $x\le y$ and $y\le_L x$. 
Then by Proposition \ref{ioml-50-10}$(3)$, $x=y$.  
Conversely, let $X$ be an i-OL satisfying condition $(c)$ and let $x,y\in X$ such that 
$x\le_L y$. We have 
$(x\wedge_Q y)\ra x=x^*\ra (x\wedge_Q y)^*=x^*\ra ((x^*\ra y^*)\ra y^*)=(x^*\ra y^*)\ra (x^*\ra y^*)=1$; 
hence, $x\wedge_Q y\le x$. 
Since $x\le_L y$, $x=(x\ra y^*)^*$, so $x^*=x\ra y^*$. It follows that: \\
$\hspace*{2.10cm}$ $(x\ra (x\wedge_Q y)^*)^*=(x\ra ((x^*\ra y^*)\ra y^*))^*$ \\
$\hspace*{4.80cm}$ $=((x^*\ra y^*)\ra (x\ra y^*))^*$ \\
$\hspace*{4.80cm}$ $=((x^*\ra y^*)\ra x^*)^*=(x^*)^*=x$ (by $(impl)$), \\
thus, $x\le_L x\wedge_Q y$.  By hypothesis, $x\wedge_Q y\le x$ and $x\le_L x\wedge_Q y$ implies $x=x\wedge_Q y$, that is, $x\le_Q y$. 
We proved that $x\le_L y$ implies $x\le_Q y$, hence, by Theorem \ref{dioml-05-50}, it follows that $X$ is 
an i-OML. \\
$(a)\Leftrightarrow (d)$ Let $X$ be an i-OML and let $x,y\in X$ such that $x\wedge_Q y^*=0$.  
Then by Proposition \ref{ioml-50-10}$(6)$, $x\wedge_Q y^*=0$ implies $x\wedge_Q y=x$.  
Conversely, let $X$ be an i-OL satisfying condition $(d)$. 
For all $x,y\in X$, using $(iG)$ we have: \\
$\hspace*{2.00cm}$ $x\wedge_Q (y\ra x)^*=((x^*\ra (y\ra x))\ra (y\ra x))^*$ \\
$\hspace*{4.20cm}$ $=((y\ra (x^*\ra x))\ra (y\ra x))^*$ \\
$\hspace*{4.20cm}$ $=((y\ra x)\ra (y\ra x))^*=0$. \\
By hypothesis, $x\wedge_Q (y\ra x)=x$, hence, $X$ satisfies condition $(IOM)$, and thus $X$ is an i-OML. \\
$(a)\Leftrightarrow (e)$ If $X$ is an i-OML, by Proposition \ref{ioml-50-10}$(5)$ condition $(e)$ is satisfied. 
Conversely, assume that $X$ is an i-OL verifying condition $(e)$. 
It follows that $x\ra (x\wedge_Q (y\ra x^*))=x\ra (y\ra x^*)$, and using $(impl)$ we get 
$x\ra (x\ra (y\ra x^*)^*)^*=y\ra x^*$ for all $x,y\in X$. Replacing $x$ by $x\ra y$ and $y$ by $x^*$ and 
applying again $(impl)$, we get $((x\ra y)\ra ((x\ra y)\ra x^*)^*=x$. 
Finally, since $x\vee_Q (x\ra y)^*)=x$ for all $x,y\in X$, it follows that $X$ satisfies condition $(IOM^{\prime\prime})$. Hence, $X$ is an i-OML. 
\end{proof}

\begin{remarks} \label{dioml-05-70} Let $X$ be an implicative-orthomodular lattice and let $x,y\in X$. \\
$(1)$ By Lemma \ref{iol-30-20}$(6)$ and Proposition \ref{ioml-50-10}$(6)$, $x\wedge_Q y=x$ iff $x\wedge_Q y^*=0$, 
or equivalently, $x\le_Q y$ iff $x\wedge_Q y^*=0$. \\
$(2)$ By $(1)$ and Corollary \ref{dioml-05-55}, $x\le_Q y$ iff $x\le_L y$ iff $x\wedge_Q y^*=0$. 
\end{remarks} 

A typical example of non-orthomodular lattice is the ortholattice $\mathbf{O}_6=(O_6,\wedge,\vee,^*,0,1)$, where  
$O_6=\{0,x,y,x^*,y^*,1\}$ (Benzene ring; see for example \cite{Bonzio1}). 
We adapt this example for the case of implicative-ortholattices.  

\begin{example} \label{dioml-60}   
Consider the set $X=\{0,a,b,c,d,1\}$ and the operations $\ra$, $\wedge_Q$ defined below. 
\[
\begin{picture}(50,-70)(0,60)
\put(37,11){\circle*{3}}
\put(34,0){$0$}
\put(37,11){\line(3,4){20}}
\put(57,37){\circle*{3}}
\put(61,35){$d$}
\put(37,11){\line(-3,4){20}}
\put(18,37){\circle*{3}}
\put(8,35){$a$}
\put(18,37){\line(0,1){30}}
\put(18,68){\circle*{3}}
\put(8,68){$b$}
\put(57,37){\line(0,1){30}}
\put(57,68){\circle*{3}}
\put(61,68){$c$}
\put(18,68){\line(3,4){20}}
\put(38,95){\circle*{3}}
\put(35,100){$1$}
\put(57,68){\line(-3,4){20}}
\end{picture}
\hspace*{2cm}
\begin{array}{c|cccccc}
\rightarrow & 0 & a & b & c & d & 1 \\ \hline
0 & 1 & 1 & 1 & 1 & 1 & 1 \\
a & c & 1 & 1 & c & c & 1 \\
b & d & 1 & 1 & c & d & 1 \\
c & a & a & b & 1 & 1 & 1 \\
d & b & b & b & 1 & 1 & 1 \\
1 & 0 & a & b & c & d & 1
\end{array}
\hspace{10mm}
\begin{array}{c|ccccccc}
\wedge_Q & 0 & a & b & c & d & 1 \\ \hline
0    & 0 & 0 & 0 & 0 & 0 & 0 \\ 
a    & 0 & a & b & 0 & 0 & a \\ 
b    & 0 & a & b & 0 & 0 & b \\ 
c    & 0 & 0 & 0 & c & d & c \\
d    & 0 & 0 & 0 & c & d & d \\
1    & 0 & a & b & c & d & 1 
\end{array}
.
\]

$\vspace*{5mm}$

Then $(X,\ra,^*,1)$, where $x^*=x\ra 0$, is an i-OL, but not an i-OML. Indeed, $a\wedge_Q (d\ra a)=b\neq a$; hence, condition $(IOM)$ is not satisfied. We have the following remarks: \\
$(1)$ Since $(a\ra b^*)^*=a$ and $a\wedge_Q b=b\neq a$, we have $a\le_L b$ and $a\nleq_Q b$. 
Moreover, $a\le_L b$, but $b\vee_Q a=(b^*\wedge_Q a^*)^*=(d\wedge_Q c)^*=c^*=a\neq b$. 
These observations confirm that conditions $(b)$ and $(c)$ of Theorem \ref{dioml-05-50} are satisfied for 
all $x, y\in X$ only if $X$ is an i-OML.\\
$(2)$ We also have: \\
$\hspace*{3cm}$ $(b\ra c)\ra (c\wedge_Q b)=a\neq b;$ \\
$\hspace*{3cm}$ $b\le a$ and $a\le_L b$, but $a\neq b;$ \\
$\hspace*{3cm}$ $a\wedge_Q b^*=0$ and $a\wedge_Q b=b\neq a;$ \\
$\hspace*{3cm}$ $b\ra (b\wedge_Q c)=d\neq c=b\ra c$. \\
Therefore, conditions $(b)$, $(c)$, $(d)$ of Theorem \ref{dioml-05-60} are satisfied for all $x, y\in X$ 
only if $X$ is an i-OML.
\end{example}

\noindent
The following example is derived from \cite[Ex. 13.3.2]{Ior35}. 

\begin{example} \label{dioml-60-10}
Let $X=\{0,a,b,c,d,e,f,g,h,1\}$ and let $(X,\ra,^*,1)$ be the i-OL with $\ra$ and the 
corresponding operation $\wedge_Q$ given in the following tables: \\
\[
\begin{array}{c|cccccccccc}
\ra & 0 & a & b & c & d & e & f & g & h & 1 \\ \hline
0   & 1 & 1 & 1 & 1 & 1 & 1 & 1 & 1 & 1 & 1 \\ 
a   & b & 1 & b & 1 & 1 & h & f & f & h & 1 \\ 
b   & a & a & 1 & 1 & 1 & a & 1 & a & 1 & 1 \\ 
c   & d & 1 & 1 & 1 & d & 1 & 1 & 1 & 1 & 1 \\
d   & c & 1 & 1 & c & 1 & 1 & 1 & 1 & 1 & 1 \\
e   & f & 1 & f & 1 & 1 & 1 & f & f & 1 & 1 \\
f   & e & a & h & 1 & 1 & e & 1 & a & h & 1 \\
g   & h & 1 & h & 1 & 1 & h & 1 & 1 & h & 1 \\
h   & g & a & f & 1 & 1 & a & f & g & 1 & 1 \\
1   & 0 & a & b & c & d & e & f & g & h & 1
\end{array}
\hspace{10mm}
\begin{array}{c|cccccccccc}
\wedge_Q & 0 & a & b & c & d & e & f & g & h & 1 \\ \hline
0    & 0 & 0 & 0 & 0 & 0 & 0 & 0 & 0 & 0 & 0 \\ 
a    & 0 & a & 0 & c & d & e & g & g & e & a \\ 
b    & 0 & 0 & b & c & d & 0 & b & 0 & b & b \\ 
c    & 0 & a & b & c & 0 & e & f & g & h & c \\
d    & 0 & a & b & 0 & d & e & f & g & h & d \\
e    & 0 & e & 0 & c & d & e & 0 & 0 & e & e \\
f    & 0 & g & b & c & d & 0 & f & g & b & f \\
g    & 0 & g & 0 & c & d & 0 & g & g & 0 & g \\
h    & 0 & e & b & c & d & e & b & 0 & h & h \\
1    & 0 & a & b & c & d & e & f & g & h & 1
\end{array}
.
\]

Then $(X,\ra,^*,1)$ is an i-OML, and this example can be used to verify the results presented in this section. 
\end{example}

A major distinction between classical logic, modeled by Boolean algebras, and quantum logic, modeled by 
orthomodular lattices, is that distributivity fails in quantum logic. 
D.J. Foulis \cite{Foulis2} and S.S. Holland \cite{Holl1} independently established 
conditions under which a triple of elements in an orthomodular lattice is distributive. 
We defined distributivity in an implicative-orthomodlular lattice in \cite{Ciu84} and proved an analogue of the
Foulis-Holland theorem for such structures. 

\begin{definition} \label{dioml-70} \cite{Ciu84}
\emph{
An implicative-ortholattice $X$ is \emph{distributive} if it satisfies one of the following equivalent conditions: 
for all $x, y, z\in X$, \\
$(Idis_1)$ $((x^*\ra y)\ra z^*)^*=(x\ra z^*)\ra (y\ra z^*)^*$, \\
$(Idis_2)$ $((x\ra y^*)\ra z)^*=((z^*\ra x)\ra (z^*\ra y)^*$. 
}
\end{definition}

Using the mutually inverse transformations $\varphi$ and $\psi$ from 
Remark \ref{ioml-30-20}, we can easily see that $(Idis_1)$ and $(Idis_2)$ are equivalent to the lattice distributivity 
laws: for all $x, y, z\in X$, \\
$(Dis_1)$ $z\wedge (x\vee y)=(z\wedge x)\vee (z\wedge y)$ and 
$(Dis_2)$ $z\vee (x\wedge y)=(z\vee x)\wedge (z\vee y)$, respectively. 

\begin{proposition} \label{dioml-90} \cite{Ciu84} 
\emph{
An implicative-orthomodular lattice $X$ is distributive if and only if it satisfies the 
\emph{divisibility condition}: for all $x, y\in X$, \\
$(Idiv)$ $x\ra (x\ra y)^*=x\ra y^*$.  
}
\end{proposition}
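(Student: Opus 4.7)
The plan is to reduce the proposition to an equivalent statement in the term-equivalent orthomodular lattice via Remark~\ref{ioml-30-20}. Writing $x\ra y=(x\wedge y^{\prime})^{\prime}$ and using $^*={}^{\prime}$, one computes $(x\ra y)^*=x\wedge y^{\prime}$, $x\ra y^*=(x\wedge y)^{\prime}$, and
\[
x\ra (x\ra y)^*=\bigl(x\wedge (x^{\prime}\vee y)\bigr)^{\prime}.
\]
Hence $(Idiv)$ is equivalent to the lattice identity
\[
x\wedge (x^{\prime}\vee y)=x\wedge y \quad\text{for all } x,y\in X,
\]
while, by the paragraph following Definition~\ref{dioml-70}, $(Idis_1)$ and $(Idis_2)$ are equivalent to the classical distributive laws $(Dis_1)$ and $(Dis_2)$. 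Thus it suffices to prove the proposition for the associated orthomodular lattice in its lattice-theoretic formulation.

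For the forward direction, assume $X$ is distributive. Then $x\wedge (x^{\prime}\vee y)=(x\wedge x^{\prime})\vee (x\wedge y)=0\vee (x\wedge y)=x\wedge y$, which is the lattice form of $(Idiv)$.

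For the converse, assume $x\wedge (x^{\prime}\vee y)=x\wedge y$ for all $x,y\in X$. Substituting $y^{\prime}$ for $y$ and applying De~Morgan yields $x\wedge (x\wedge y)^{\prime}=x\wedge y^{\prime}$. Applying the orthomodular law $(OM^{\prime})$ to the inequality $x\wedge y\le x$ gives $x=(x\wedge y)\vee \bigl((x\wedge y)^{\prime}\wedge x\bigr)=(x\wedge y)\vee (x\wedge y^{\prime})$, so every pair $x,y\in X$ commutes in the orthomodular sense. Invoking the analogue of the Foulis--Holland theorem proved for implicative-orthomodular lattices in \cite{Ciu84}, whenever one element of a triple commutes with the other two, the triple generates a distributive sublattice. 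Since all pairs of elements commute, this applies to every triple, so $(Dis_1)$ and $(Dis_2)$ hold throughout $X$, and translating back yields $(Idis_1)$ and $(Idis_2)$.

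The main obstacle is the converse direction: one must recognise $(Idiv)$ as the algebraic encoding of global commutativity of the lattice. The decisive step is the orthomodularity computation above, which converts the single-variable identity into the full commutativity condition $x=(x\wedge y)\vee (x\wedge y^{\prime})$; once commutativity of every pair is established, Foulis--Holland closes the argument. An alternative, more i-OL native route would be to manipulate $(Idis_2)$ directly using Proposition~\ref{ioml-50}, Proposition~\ref{ioml-50-10}, and the characterizations in Theorem~\ref{dioml-05-60}, but the lattice reduction is substantially cleaner.
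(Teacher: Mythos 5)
Your proof is correct. Note first that the paper does not actually prove this proposition: it is quoted from \cite{Ciu84}, so there is no internal proof to compare against. Your translation is accurate ($x\ra(x\ra y)^*=(x\wedge(x'\vee y))'$ and $x\ra y^*=(x\wedge y)'$, so $(Idiv)$ becomes $x\wedge(x'\vee y)=x\wedge y$, which is exactly the condition the paper calls $(Div)$), the forward direction is immediate, and the converse correctly uses $(OM)$ applied to $x\wedge y\le x$ together with the derived identity $x\wedge(x\wedge y)'=x\wedge y'$ to get $x=(x\wedge y)\vee(x\wedge y')$, i.e.\ global commutativity, after which Foulis--Holland gives distributivity. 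Two small observations: (i) the Foulis--Holland step you invoke is the classical one for orthomodular lattices, which is what you need after translating; citing its implicative analogue from \cite{Ciu84} is fine but not necessary once you are on the lattice side. (ii) There is a shorter route already implicit in the paper's own remarks after Definition \ref{dioml-100}: by \cite[Thm.\ 9.2.12]{Ior35}, $(Div)$ is equivalent to $(Dis_1)$ in \emph{any} ortholattice, so $(Idiv)\Leftrightarrow(Idis_1)$ follows directly from the term-equivalence without using orthomodularity at all (this is how the paper arrives at Theorem \ref{dioml-110}). Your argument is the more self-contained one and explains \emph{why} $(Idiv)$ encodes commutativity, at the cost of using the orthomodular hypothesis; both are valid for the proposition as stated.
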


A \emph{Boolean algebra} is an ortholattice $(X,\wedge,\vee,^*,0,1)$ satisfying one of the equivalent conditions $(Dis_1)$ and $(Dis_2)$. 
Motivated by the axioms system of the classical propositional logic, A. Iorgulescu introduced the implicative-Boolean algebras in 2009 and proved that they are term-equivalent to Boolean algebras. Consequently, implicative-Boolean algebras may also be regarded as models of classical logic. 
The following definition is an equivalent of \cite[Def. 3.3.23]{Ior35} for implicative-Boolean algebras. 

\begin{definition} \label{dioml-100} \cite[Def. 3.4.12]{Ior35}
\emph{
An implicative-ortholattice satisfying condition: for all $x, y\in X$, 
$(@)$ $(y^*\ra x)\ra y=x\ra y$, 
is called an \emph{implicative-Boolean algebra} (\emph{i-Boolean algebra} for short). 
}
\end{definition}

In other words, i-Boolean algebra = i-OL + $(@)$. \\
Using the mutually inverse transformations $\varphi$ and $\psi$ from Remark \ref{ioml-30-20}, the implicative-Boolean algebras are term-equivalent to Boolean algebras \cite[Prop. 3.3.24]{Ior35}. \\
According to \cite[Thm. 9.2.12]{Ior35}, in any ortholattice, condition $(Dis_1)$ is equivalent to condition: 
for all $x, y\in X$, \\ 
$(Div)$ $x\wedge (x\wedge y^*)^*=x\wedge y$. \\
According to \cite[Thm. 9.2.25]{Ior35}, the class of ortholattices satisfying one of the equivalent conditions 
$(Div)$, $(Dis_1)$ or $(Dis_2)$ is equivalent to the class of Boolean algebras. \\ 
We also mention that, by \cite[Thm. 9.2.26]{Ior35}, a Boolean algebra is an involutive m-BE algebra 
$(X, \odot, ^*, 1)$ satisfying condition $(m$-$Pdiv)$ $x\odot (x\odot y^*)^*=x\odot y$. 

\begin{remark} \label{cioml-80} Let $X$ be an implicative-ortholattice and let $x,y\in X$. 
Conditions $(Idiv)$ and $(@)$ are equivalent. 
Indeed, by $(Idiv)$, $x\ra y=y^*\ra x^*=y^*\ra (y^*\ra x)^*=(y^*\ra x)\ra y$, that is, $(@)$. 
Conversely, $(@)$ implies $x\ra y^*=y\ra x^*=(x\ra y)\ra x^*=x\ra (x\ra y)^*$, that is, $(Idiv)$. 
\end{remark}

Consequently, i-Boolean algebra = i-OL + $(Idiv)$.  
Taking the above results into account, we obtain an analogue of \cite[Thm. 9.2.25]{Ior35}: 

\begin{theorem} \label{dioml-110} The class of implicative-ortholattices satisfying one of the equivalent conditions $(Idiv)$, $(Idis_1)$ or $(Idis_2)$ is term-equivalent to the class of implicative-Boolean algebras. 
\end{theorem}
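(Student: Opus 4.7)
The plan is to package this statement as a consequence of several already-available facts, passing through the term-equivalence between implicative-ortholattices and ortholattices supplied by the mutually inverse maps $\varphi,\psi$ from Remark \ref{ioml-30-20}.

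First, I would note that by Remark \ref{cioml-80} the condition $(Idiv)$ is equivalent to $(@)$ in any implicative-ortholattice, so an i-OL satisfying $(Idiv)$ is by Definition \ref{dioml-100} exactly an implicative-Boolean algebra; and implicative-Boolean algebras are already known to be term-equivalent to Boolean algebras by \cite[Prop. 3.3.24]{Ior35} (quoted right after Definition \ref{dioml-100}). So the claim reduces to showing that $(Idiv)$, $(Idis_1)$ and $(Idis_2)$ pick out the same subclass of i-OLs.

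For this, I would invoke $\varphi,\psi$ to transport each of the three implicative conditions to its ortholattice counterpart. The translations $(Idis_1)\leftrightarrow (Dis_1)$ and $(Idis_2)\leftrightarrow (Dis_2)$ are precisely the equivalences flagged in the paragraph after Definition \ref{dioml-70}. For $(Idiv)\leftrightarrow (Div)$, substituting $x\ra y:=(x\wedge y^*)^*$ on the left-hand side of $(Idiv)$ and using $(OL_1)$ together with the De Morgan laws $(OL_2)$ yields $x\wedge (x\wedge y^*)^*$ on one side and $x\wedge y$ on the other, which is exactly $(Div)$; the reverse substitution $x\wedge y:=(x\ra y^*)^*$ runs identically in the other direction. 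Now \cite[Thm. 9.2.12]{Ior35} gives $(Div)\Leftrightarrow (Dis_1)$ in any OL, and $(Dis_1)\Leftrightarrow (Dis_2)$ is a classical lattice identity under De Morgan duality, so transporting back along $\psi,\varphi$ yields $(Idiv)\Leftrightarrow (Idis_1)\Leftrightarrow (Idis_2)$ in every i-OL. Chaining with the first paragraph, i-OL $+$ (any one of the three conditions) $=$ i-Boolean algebra, and the term-equivalence of i-Boolean algebras with Boolean algebras closes the circle, matching the format of \cite[Thm. 9.2.25]{Ior35}.

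The only non-mechanical step is verifying the $\varphi,\psi$-translation of $(Idiv)$ to $(Div)$; once this routine substitution is checked, the statement follows by assembling results from Section 2 together with \cite[Thm. 9.2.12, Prop. 3.3.24, Thm. 9.2.25]{Ior35}, with no genuinely new computation required.
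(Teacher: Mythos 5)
Your proposal is correct and follows essentially the same route as the paper, which gives no explicit proof for this theorem but states it as a direct consequence of the preceding results: Remark \ref{cioml-80} ($(Idiv)\Leftrightarrow(@)$, so i-Boolean $=$ i-OL $+$ $(Idiv)$), the $\varphi,\psi$-translation of $(Idis_1),(Idis_2)$ to $(Dis_1),(Dis_2)$, and the cited ortholattice results \cite[Thm. 9.2.12, Thm. 9.2.25]{Ior35}. Your explicit verification that $(Idiv)$ transports to $(Div)$ under $\varphi,\psi$ is a small but welcome addition that the paper leaves implicit.
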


$\vspace*{1.0mm}$

\section{Implicative-ortholattices as orthogonality spaces}

Most algebraic structures that serve as foundations of physics, such as projective Hilbert spaces and orthomodular 
lattices, can be endowed with orthogonality relations giving rise to orthogonality spaces. 
Orthogonality spaces are not themselves models for quantum logic, but they determine other features of such models.  
Quantum structures can often be uniquely obtained from the orthogonality relations defined on them (as is the case, 
for example, with ortholattices). 
The characterization of quantum structures solely based on orthogonality relations was proposed by D.J. Foulis 
in 1960s, while J.R. Dacey studied the orthogonality spaces from the point of view of quantum physics \cite{Dacey}. 
Since then, orthogonality spaces have been investigated by many authors, but the subject is still of 
current interest. 
This motivated us to redefine certain orthomodular structures and their corresponding orthogonality relations, 
with the intention of creating a new framework to study the connection between these structures and quantum logic. \\ 
Given an implicative-ortholattice $(X,\ra,^*,1)$, we define an orthogonality relation 
$\perp$ on $X^{\prime}=X\setminus \{0\}$, and we prove that $(X^{\prime},\perp)$ is an orthogonality space. 

\begin{definition} \label{ooml-05} \cite{Lind1}
\emph{
An \emph{orthogonality space} (or an \emph{orthoset}) $(X,\perp)$ is a set $X$ equipped with a symmetric and irreflexive binary relation $\perp$, called the \emph{orthogonality relation}. 
We call $x,y\in X$ \emph{orthogonal elements} if $x\perp y$. 
}
\end{definition}

If $(X,\perp)$ is an orthogonality space, and $A\subseteq X$, denote $A^{\perp}=\{x\in X\mid x\perp a$ 
for all $a\in A\}$. 
For $A=\{x\}$ ($A$ is a singleton), we denote $x^{\perp}$ instead of $\{x\}^{\perp}$. 
The subset $A$ is called \emph{orthoclosed} if $A=A^{\perp\perp}$, and denote by $\mathcal{CL}(X,\perp)$ the set of 
all orthoclosed subsets of $X$. Obviously, $X^{\perp}=\emptyset$ and $\emptyset^{\perp}=X$. 
A subset of mutually orthogonal elements of an orthogonality space is called a \emph{$\perp$-set}. 

\begin{definition} \label{ooml-10}
\emph{
Given an implicative-ortholattice $(X,\ra,^*,1)$, we define the binary relation $\perp$ by: for all $x, y\in X$,  
$x\perp y$ if and only if $x^*=x\ra y$.  
}
\end{definition}

\begin{lemma} \label{ooml-20} Let $X$ be an implicative-ortholattice. 
The following hold for all $x,y\in X$: \\
$(1)$ $x\perp y$ iff $y\perp x;$ $(2)$ $x\perp x$ iff $x=0;$ $(3)$ $0\perp x;$ $(4)$ $1\perp x$ iff $x=0;$ \\
$(5)$ $x\le_L y$ implies $x\perp y^*;$ $(6)$ $x\perp (y\ra x)^*;$ $(7)$ $x\perp y$ iff $x\le_L y^*$. 
\end{lemma}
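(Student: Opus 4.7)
The plan is to verify each of the seven items directly from the definition $x\perp y\Longleftrightarrow x^*=x\ra y$, invoking the algebraic identities already established for implicative-ortholattices, especially those in Lemma \ref{iol-30} and Lemma \ref{iol-30-20}. None of the items should require more than a couple of lines once the definitions are unpacked, so the main challenge is organizational rather than mathematical: to present the verifications in an order that exploits earlier items to shorten later ones.

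For item $(1)$, symmetry, the natural move is to cite Lemma \ref{iol-30-20}$(5)$, which states that $x^*=x\ra y$ iff $y^*=y\ra x$; this is exactly the symmetry of $\perp$. For item $(2)$, $x\perp x$ unfolds to $x^*=x\ra x=1$ by $(BE_1)$, forcing $x=1^*=0$; conversely, $0\ra 0=1=0^*$. Item $(3)$ follows from $0\le x$ and $(BE_3)$-style arguments giving $0\ra x=1=0^*$. Item $(4)$ uses $(BE_3)$: $1\perp x$ reads $0=1^*=1\ra x=x$.

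For item $(5)$, $x\le_L y$ is by definition $x=(x\ra y^*)^*$, equivalently $x^*=x\ra y^*$, which is exactly $x\perp y^*$. Item $(6)$ then follows immediately by combining Lemma \ref{iol-30-20}$(3)$ (which says $x\le_L y\ra x$) with item $(5)$ applied to $y\ra x$ in place of $y$. Alternatively, one can verify it by direct computation using $(impl)$: $x\ra(y\ra x)^*=(y\ra x)\ra x^*=(x^*\ra y^*)\ra x^*=x^*$. Finally, for item $(7)$, one simply notes that $x\perp y$ means $x^*=x\ra y=x\ra y^{**}$, equivalently $x=(x\ra y^{**})^*$, which is the definition of $x\le_L y^*$ since $X$ is involutive.

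The only minor subtlety to watch for is making sure involutivity is used when needed (in items $(6)$ and $(7)$) so that $y^{**}=y$ silently identifies the two forms of the $\le_L$ condition. I do not anticipate any real obstacle; the verifications are essentially a translation between the orthogonality relation and the order $\le_L$, mediated by the involution.
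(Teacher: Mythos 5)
Your proposal is correct and follows essentially the same route as the paper: item $(1)$ via Lemma \ref{iol-30-20}$(5)$, items $(2)$--$(5)$ by direct unwinding of the definitions, item $(6)$ by the same $(impl)$ computation (your alternative via Lemma \ref{iol-30-20}$(3)$ plus item $(5)$ is a harmless streamlining), and item $(7)$ by the same chain of equivalences using involutivity. No gaps.
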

\begin{proof}
$(1)$ follows from Lemma \ref{iol-30-20}$(5)$, and $(2)$-$(5)$ are straightforward. \\
$(6)$ By $(impl)$, $x\ra (y\ra x)^*=(y\ra x)\ra x^*=(x^*\ra y^*)\ra x^*$; hence, $x\perp (y\ra x)^*$. \\
$(7)$ We have $x\perp y$ iff $x^*=x\ra y$ iff $x=(x\ra y)^*$ iff $x\le_L y^*$. 
\end{proof}

\begin{lemma} \label{ooml-30} Let $X$ be an implicative-ortholattice and let $x, y\in X$ such that $x\perp y$.  
The following hold: \\
$(1)$ $x^*\ra y^*=y^*$ and $y^*\ra x^*=x^*;$ \\ 
$(2)$ $(x^*\ra y)\ra x=y^*;$ \\
$(3)$ $(x^*\ra y)\ra x=x^*;$ \\
$(4)$ $x^*\ra (x^*\ra y)^*=y^*$. 
\end{lemma}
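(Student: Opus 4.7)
The central tool is Lemma~\ref{ooml-20}(1),(7): from $x\perp y$ I get both equalities $x^*=x\ra y$, $y^*=y\ra x$ (using symmetry) and both $\le_L$-inequalities $x\le_L y^*$, $y\le_L x^*$. The plan is to treat these as standing hypotheses and reduce each identity either to a routine BE-algebra manipulation or to the absorption $v=u\vee_Q v$ provided by Proposition~\ref{qbe-20}(1).

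For (1), I would substitute $y^*=y\ra x$ and apply $(BE_4)$ together with $(iG)$ of Lemma~\ref{iol-30}, obtaining $x^*\ra y^*=x^*\ra (y\ra x)=y\ra (x^*\ra x)=y\ra x=y^*$. The companion identity $y^*\ra x^*=x^*$ then follows by swapping the roles of $x$ and $y$.

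For (2), I would first rewrite $x^*\ra y=y^*\ra x$ via Lemma~\ref{qbe-10}(5), so that $(x^*\ra y)\ra x=(y^*\ra x)\ra x=y^*\vee_Q x$. What remains is $y^*\vee_Q x=y^*$, equivalently $x\le_Q y^*$; this is obtained from $x\le_L y^*$ via the orthomodular coincidence $\le_L=\le_Q$ (Corollary~\ref{dioml-05-55}) combined with the absorption in Proposition~\ref{qbe-20}(1). Part (3) is then settled by exactly the same computation after swapping $x$ and $y$ (invoking $y\perp x$ and $y\le_L x^*$). For (4), I would reduce directly to (2): Lemma~\ref{qbe-10}(5) gives $x^*\ra (x^*\ra y)^*=((x^*\ra y)^*)^*\ra x=(x^*\ra y)\ra x$, which by (2) equals $y^*$.

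The principal obstacle is the passage from $\le_L$ to $\le_Q$ needed inside (2)--(4): this is precisely where the orthomodular property becomes indispensable, so I would expect (1) to hold in any implicative-ortholattice whereas (2)--(4) implicitly rely on the i-OML characterizations of Section~2. The BE-algebra steps in (1) and (4) are otherwise straightforward applications of $(BE_4)$, $(iG)$ and Lemma~\ref{qbe-10}(5).
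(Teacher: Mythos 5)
Your argument follows essentially the same route as the paper's: item (1) is a direct computation, and items (2)--(4) are all funneled through the single absorption identity $(y^*\ra x)\ra x=y^*$ (the paper phrases this as $(y\vee_Q x)\ra x=y\ra x$, citing Proposition \ref{ioml-50}(9)); your reduction of (4) to (2) via Lemma \ref{qbe-10}(5) is exactly the paper's step, and your reading of (3) as $(x^*\ra y)\ra y=x^*$ is what the paper's own proof actually computes (the printed statement of (3) repeats the left-hand side of (2) and is evidently a typo). Two remarks. First, the one citation that needs repair is the absorption step: Proposition \ref{qbe-20}(1) turns $x\le_Q y^*$ into $x\vee_Q y^*=(x\ra y^*)\ra y^*=y^*$, whereas what you need is $y^*\vee_Q x=(y^*\ra x)\ra x=y^*$, and $\vee_Q$ is not commutative in an implicative-ortholattice; the correct reference is Theorem \ref{dioml-05-50}(c) applied to $x\le_L y^*$ (or Proposition \ref{ioml-50}(9), as in the paper). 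Second, your closing caveat is not just prudence but is verifiably correct: items (2)--(4) genuinely fail in a general implicative-ortholattice. In the Benzene example (Example \ref{dioml-60}) one has $a\perp d$, yet $(a^*\ra d)\ra a=(c\ra d)\ra a=1\ra a=a\ne b=d^*$, so (2) fails there. The paper's proof imports the orthomodular law silently through Proposition \ref{ioml-50}(9), which is stated only for implicative-orthomodular lattices; your proof makes the same importation explicit. In sum: correct modulo the single mis-citation, same strategy as the paper, with a legitimate observation about the hypothesis that the paper itself glosses over.
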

\begin{proof}
Let $x, y\in X$ such that $x\perp y$, that is $y\perp x$. Hence, $x^*=x\ra y$ and $y^*=y\ra x$. 
According to Proposition \ref{ioml-50}$(9)$, $(x\vee_Q y)\ra y=x\ra y$ and we have: \\
$(1)$ $x^*\ra y^*=y\ra x=y^*$ and $y^*\ra x^*=x\ra y=y^*$. \\
$(2)$ $(x^*\ra y)\ra x=(y^*\ra x)\ra x=((y\ra x)\ra x)\ra x=(y\vee_Q x)\ra x=y\ra x=y^*$. \\
$(3)$ $(x^*\ra y)\ra y=((x\ra y)\ra y)\ra y=(x\vee_Q y)\ra y=x\ra y=y^*$. \\
$(4)$ $x^*\ra (x^*\ra y)^*=(x^*\ra y)\ra x=(y^*\ra x)\ra x=((y\ra x)\ra x)\ra x=(y\vee_Q x)\ra x=y\ra x=y^*$.  
\end{proof}

\begin{proposition} \label{ooml-50} Let $X$ be an implicative-orthomodular lattice and let $x,y\in X$. 
Then $x\perp y$ if and only if $x\wedge_Q y=0$.   
\end{proposition}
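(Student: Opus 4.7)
The plan is to reduce the equivalence to the already-established chain of relations between $\perp$, $\le_L$, $\le_Q$, and $\wedge_Q$. The key observation is that $x\perp y$ is, by Lemma~\ref{ooml-20}(7), just a reformulation of $x\le_L y^*$, while $x\wedge_Q y=0$ is a statement about the $\le_Q$-side. In an implicative-orthomodular lattice, the two relations $\le_L$ and $\le_Q$ coincide (Corollary~\ref{dioml-05-55}), so the proof will amount to bridging these characterizations.

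More concretely, I would argue as follows. First, by Lemma~\ref{ooml-20}(7), $x\perp y$ is equivalent to $x\le_L y^*$. Next, by Corollary~\ref{dioml-05-55}, in the i-OML $X$ we have $\le_L\,=\,\le_Q$, so $x\le_L y^*$ is equivalent to $x\le_Q y^*$, i.e.\ to $x=x\wedge_Q y^*$. The remaining step is the equivalence $x=x\wedge_Q y^*$ iff $x\wedge_Q y=0$. One direction is Lemma~\ref{iol-30-20}(6) (applied with $y$ replaced by $y^*$ and using involutivity $y^{**}=y$): from $x\wedge_Q y^*=x$ one obtains $x\wedge_Q y=0$. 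The converse is Proposition~\ref{ioml-50-10}(6), which holds precisely because $X$ is an i-OML: from $x\wedge_Q y=0$, i.e.\ $x\wedge_Q (y^*)^*=0$, one concludes $x\wedge_Q y^*=x$. Combining these three equivalences gives $x\perp y$ iff $x\wedge_Q y=0$.

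I do not expect any real obstacle, since both of the equivalences in the middle of the argument are recorded in the preliminary section, and the final synthesis is precisely what Remark~\ref{dioml-05-70}(2) states after a negation of $y$. The only place where the orthomodularity hypothesis is essential is in the step $x\wedge_Q y=0\Rightarrow x\wedge_Q y^*=x$, which fails in a general i-OL (as illustrated by Example~\ref{dioml-60}, where $a\wedge_Q b^*=0$ but $a\wedge_Q b=b\neq a$); this is exactly why Proposition~\ref{ioml-50-10}(6) is needed. So the proof can be written in a few lines by assembling the cited facts, without any new computation.
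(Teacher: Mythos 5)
Your proof is correct: every cited fact checks out, all references point to results established before Proposition \ref{ooml-50}, and there is no hidden circularity (in particular, the proof of Proposition \ref{ioml-50-10}$(6)$ does not rely on Proposition \ref{ooml-50}). However, your route differs from the paper's. The paper proves the forward direction by a direct computation: from $x^*=x\ra y$ it substitutes into $x\wedge_Q y=((x^*\ra y^*)\ra y^*)^*$ and collapses the expression to $0$ using only $(impl)$ — so that direction holds in any i-OL, which is why Example \ref{ooml-60}$(2)$ can exhibit $b\wedge_Q c=0$ with $b\not\perp c$ to show only the converse needs orthomodularity. For the converse the paper argues from $(x^*\ra y^*)\ra y^*=1$ that $x^*\ra y^*\le y^*$, pairs this with $y^*\le_L x^*\ra y^*$, and invokes Proposition \ref{ioml-50-10}$(3)$ to get $y^*=y\ra x$. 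Your argument instead assembles the chain $x\perp y\iff x\le_L y^*\iff x\le_Q y^*\iff x=x\wedge_Q y^*\iff x\wedge_Q y=0$ from Lemma \ref{ooml-20}$(7)$, Corollary \ref{dioml-05-55}, Lemma \ref{iol-30-20}$(6)$ and Proposition \ref{ioml-50-10}$(6)$, with no new computation; as you note, it is essentially Remark \ref{dioml-05-70}$(2)$ applied to $y^*$. This is more modular and arguably cleaner, at the cost of one small imprecision: in \emph{your} write-up the forward direction also passes through $\le_L\subseteq\le_Q$ and hence uses orthomodularity, so your closing remark that orthomodularity enters only at the step $x\wedge_Q y=0\Rightarrow x\wedge_Q y^*=x$ is accurate about the mathematics (the paper's computation shows the forward implication is i-OL-valid) but not about the structure of your own argument. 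That is a cosmetic point, not a gap.
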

\begin{proof}
Assume that $x\perp y$; hence, $x^*=x\ra y$, and by $(impl)$, we obtain: \\ 
$\hspace*{2.00cm}$  $x\wedge_Q y=((x^*\ra y^*)\ra y^*)^*=(((x\ra y)\ra y^*)\ra y^*)^*$ \\
$\hspace*{3.0cm}$  $=(((y^*\ra x^*)\ra y^*)\ra y^*)^*=(y^*\ra y^*)^*=0$. \\
Conversely, if $x\wedge_Q y=0$, then $(x^*\ra y^*)\ra y^*=1$; hence, $x^*\ra y^*\le y^*$. 
On the other hand, $y^*\le_L x^*\ra y^*$, and by Proposition \ref{ioml-50-10}$(3)$, we get $y^*=x^*\ra y^*=y\ra x$. 
Thus, $y\perp x$, and hence $x\perp y$. 
\end{proof}

In the next result, we give a characterization of implicative-orthomodular lattices using the orthogonality relation. 

\begin{proposition} \label{ooml-40} Let $X$ be an implicative-ortholattice. Then $X$ is an implicative-orthomodular 
lattice if and only if for all $x, y\in X$, $x\perp y$ implies $x\wedge_Q y^*=x$. 
\end{proposition}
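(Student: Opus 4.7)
The plan is to prove both directions by exploiting the characterization of orthogonality via the order $\le_L$ given by Lemma \ref{ooml-20}$(7)$ together with the equivalence between the i-OML condition and the inclusion $\le_L\, \subseteq\, \le_Q$ from Theorem \ref{dioml-05-50}.

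For the forward direction, assume $X$ is an implicative-orthomodular lattice and let $x\perp y$. By Lemma \ref{ooml-20}$(7)$, this is equivalent to $x\le_L y^*$. By Corollary \ref{dioml-05-55}, in an i-OML the relations $\le_L$ and $\le_Q$ coincide, so $x\le_Q y^*$, which by definition of $\le_Q$ means exactly $x=x\wedge_Q y^*$.

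For the converse, assume $X$ is an i-OL in which every orthogonal pair $x\perp y$ satisfies $x\wedge_Q y^*=x$. I would like to verify the hypothesis of Theorem \ref{dioml-05-50}$(b)$, namely $\le_L\,\subseteq\, \le_Q$. So take any $x,y\in X$ with $x\le_L y$. By Lemma \ref{ooml-20}$(5)$, $x\le_L y$ implies $x\perp y^*$. Applying the standing hypothesis to the pair $x,y^*$ yields $x\wedge_Q (y^*)^*=x$, and using involutivity this reads $x\wedge_Q y=x$, that is, $x\le_Q y$. Hence $\le_L\,\subseteq\, \le_Q$, and Theorem \ref{dioml-05-50} gives that $X$ is an implicative-orthomodular lattice.

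There is no real obstacle here: the proof is a direct translation through Lemma \ref{ooml-20} between the orthogonality relation $\perp$ and the order $\le_L$, followed by the already-established bridge between $\le_L$, $\le_Q$ and the i-OML property. The only point requiring minor care is to apply Lemma \ref{ooml-20}$(5)$ in the form $x\le_L y\Rightarrow x\perp y^*$ (not $x\perp y$), so that after invoking the hypothesis the double negation $(y^*)^*=y$ produces the meet with $y$ rather than with $y^*$, exactly what is needed to conclude $x\le_Q y$.
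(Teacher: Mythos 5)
Your proof is correct. It takes a somewhat different route from the paper's, though both rest entirely on results already established in the text. For the forward direction the paper first converts $x\perp y$ into $x\wedge_Q y=0$ via Proposition \ref{ooml-50} and then applies the characterization $(d)$ of Theorem \ref{dioml-05-60} (with $y$ replaced by $y^*$) to get $x\wedge_Q y^*=x$; you instead pass through Lemma \ref{ooml-20}$(7)$ ($x\perp y$ iff $x\le_L y^*$) and the coincidence $\le_L=\le_Q$ of Corollary \ref{dioml-05-55}, which is a slightly more direct chain. For the converse the paper instantiates the hypothesis at the single orthogonal pair $x\perp (y\ra x)^*$ furnished by Lemma \ref{ooml-20}$(6)$, which immediately yields the defining axiom $(IOM)$; you instead instantiate it at $x\perp y^*$ whenever $x\le_L y$ (Lemma \ref{ooml-20}$(5)$) to establish $\le_L\subseteq\le_Q$ and then invoke Theorem \ref{dioml-05-50}. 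Your version has the virtue of being uniform -- both directions are a translation through the order-theoretic bridge between $\perp$, $\le_L$ and $\le_Q$ -- while the paper's converse is marginally more self-contained in that it lands directly on $(IOM)$ without the detour through Theorem \ref{dioml-05-50}. Either argument is complete and correct.
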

\begin{proof}
Let $X$ be an i-OML and let $x,y\in X$ such that $x\perp y$.  
By Proposition \ref{ooml-50}, $x\wedge_Q y=0$, and applying Theorem \ref{dioml-05-60} we get $x\wedge_Q y^*=x$. 
Conversely, suppose that for any $x,y\in X$ such that $x\perp y$, we have $x\wedge_Q y^*=x$. 
Since by Lemma \ref{ooml-20}$(6)$ we have $x\perp (y\ra x)^*$, it follows that $x\wedge_Q (y\ra x)=x$
Hence, $X$ satisfies condition $(IOM)$, and therefore it is an i-OML. 
\end{proof}

Let $(X,\ra,^*,1)$ be an implicative-ortholattice and let $X^{\prime}=X\setminus \{0\}$. 
According to Lemma \ref{ooml-20}, $\perp$ is an orthogonality relation on $X^{\prime}=X\setminus \{0\}$; hence,  $(X^{\prime},\perp)$ is an orthogonality space, called the \emph{associated orthogonality space} to the 
implicative-ortholattice. \\
If $A,B\subseteq X^{\prime}$, denote: $A\wedge B=A\cap B$, $A\vee B=(A^{\perp}\wedge B^{\perp})^{\perp}$. 
Clearly, $A\subseteq B$ implies $B^{\perp}\subseteq A^{\perp}$. \\
For any $A,B\in \mathcal{CL}(X^{\prime},\perp)$, define $A\le_L B$ iff $A\subseteq B$ and 
$A\ra B=(A\cap B^{\perp})^{\perp}$. 
We can see that $A^*=A\ra \emptyset=(A\cap \emptyset^{\perp})^{\perp}=A^{\perp}$. 

\begin{remarks} \label{dacey-10} $(1)$ For any $A\subseteq X^{\prime}$, we have $A\subseteq A^{\perp\perp}$ and 
$A^{\perp\perp\perp}=A^{\perp}$. \\
Indeed, let $x\in A$. For any $y\in A^{\perp}$, we have $y\perp A$, that is, $x\perp y$. 
Hence, $x\in A^{\perp\perp}$, and $A\subseteq A^{\perp\perp}$. 
It follows that $A^{\perp\perp\perp}\subseteq A^{\perp}$, and $A\subseteq A^{\perp\perp}$ implies also 
$A^{\perp}\subseteq (A^{\perp})^{\perp\perp}=A^{\perp\perp\perp}$. 
Hence, $A^{\perp\perp\perp}=A^{\perp}$. \\
$(2)$ For any $A,B\in \mathcal{CL}(X^{\prime},\perp)$, the definition of $A\le_L B$ is justified by the fact that  
$A\subseteq B$ iff $A=A\cap B=(A\ra B^{\perp})^{\perp}=(A\ra B^*)^*$ iff $A\le_L B$.  
\end{remarks}

\begin{remark} \label{dacey-10-10} For any $A,B \in \mathcal{CL}(X^{\prime},\perp)$, we have 
$A^{\perp}, A\cap B, A\vee B\in \mathcal{CL}(X^{\prime},\perp)$. \\ 
Indeed, $A^{\perp}=A^{\perp\perp\perp}$ implies $A^{\perp}\in \mathcal{CL}(X^{\prime},\perp)$. 
Since $A\cap B\subseteq A,B$ implies $(A\cap B)^{\perp\perp}\subseteq A,B$, we get  
$(A\cap B)^{\perp\perp}\subseteq A\wedge_Q B$. On the other hand, $A\cap B\subseteq (A\cap B)^{\perp\perp}$; therefore  
$(A\cap B)^{\perp\perp}= A\wedge_Q B$ and $A\cap B \in \mathcal{CL}(X^{\prime},\perp)$. 
Since $A\vee B=(A^{\perp}\cap B^{\perp})^\perp$, it follows that $A\vee B\in \mathcal{CL}(X^{\prime},\perp)$.
\end{remark}

\begin{proposition} \label{dacey-20} Let $(X,\ra,^*,1)$ be an implicative-ortholattice and let 
$(X^{\prime},\perp)$ be the associated orthogonality space.
Then $(\mathcal{CL}(X^{\prime},\perp),\ra,^*,X^{\prime})$ is an implicative-ortholattice. 
\end{proposition}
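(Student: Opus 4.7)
The plan is to reduce the claim to the classical fact that the orthoclosed subsets of any orthogonality space form an ortholattice, and then invoke the term-equivalence between ortholattices and implicative-ortholattices from Remark \ref{ioml-30-20} (together with Remark \ref{qbe-10-10} for the operations $\wedge^P$, $\vee^P$). Concretely, I would first show that $(\mathcal{CL}(X^{\prime},\perp), \cap, \vee, {}^{\perp}, \emptyset, X^{\prime})$ is a bounded ortholattice, where $A\vee B = (A^{\perp}\cap B^{\perp})^{\perp}$, and then observe that the operation $A\ra B = (A\cap B^{\perp})^{\perp}$ and the unary $A^{*}=A^{\perp}$ are exactly the image of this ortholattice under the mapping $\varphi$ of Remark \ref{ioml-30-20}.

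The first step is to verify that $\emptyset, X^{\prime}\in \mathcal{CL}(X^{\prime},\perp)$: by Lemma \ref{ooml-20}(4) we have $(X^{\prime})^{\perp}=\emptyset$, hence $\emptyset^{\perp\perp}=(X^{\prime})^{\perp}=\emptyset$ and $X^{\prime\perp\perp}=\emptyset^{\perp}=X^{\prime}$. Next I would show $\mathcal{CL}(X^{\prime},\perp)$ is closed under arbitrary intersections: for any family $\{A_i\}$ of orthoclosed sets, $A_i\supseteq\bigcap_j A_j$ gives $A_i=A_i^{\perp\perp}\supseteq (\bigcap_j A_j)^{\perp\perp}$, so $(\bigcap_j A_j)^{\perp\perp}\subseteq \bigcap_i A_i$, and combined with the always-valid inclusion $\bigcap_j A_j\subseteq (\bigcap_j A_j)^{\perp\perp}$ (Remark \ref{dacey-10}(1)) we obtain equality. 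Hence $\cap$ is the meet and, by definition, $\vee$ is the join in $\mathcal{CL}(X^{\prime},\perp)$.

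Next I would verify the ortholattice axioms. Involutivity $A^{**}=A^{\perp\perp}=A$ holds by the very definition of orthoclosed. The non-contradiction axiom $A\cap A^{\perp}=\emptyset$ follows from irreflexivity: an element $x\in A\cap A^{\perp}$ would satisfy $x\perp x$, forcing $x=0\notin X^{\prime}$ by Lemma \ref{ooml-20}(2). The excluded-middle axiom $A\vee A^{\perp}=(A^{\perp}\cap A^{\perp\perp})^{\perp}=(A^{\perp}\cap A)^{\perp}=\emptyset^{\perp}=X^{\prime}$. The De Morgan laws are immediate from the definition of $\vee$ together with $A^{\perp\perp}=A$: $(A\vee B)^{\perp}=A^{\perp}\cap B^{\perp}$ by construction, and $(A\cap B)^{\perp}=(A^{\perp\perp}\cap B^{\perp\perp})^{\perp}=A^{\perp}\vee B^{\perp}$.

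Finally, I would apply Remark \ref{ioml-30-20}: starting from the ortholattice $(\mathcal{CL}(X^{\prime},\perp), \cap, \vee, {}^{\perp}, \emptyset, X^{\prime})$, the recipe $A\ra B:=(A\cap B^{\perp})^{\perp}$ and $A^{*}:=A^{\perp}$ yields an implicative-ortholattice $(\mathcal{CL}(X^{\prime},\perp), \ra, ^{*}, X^{\prime})$ that matches the operations introduced just before the proposition. I expect the main obstacle to be purely notational: confirming that the ``implicative'' operation $\ra$ defined on $\mathcal{CL}$ in the statement is precisely the image under $\varphi$ of the lattice meet together with orthocomplementation, so that the term-equivalence legitimately transports the ortholattice structure to an i-OL structure. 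Once this identification is made, no separate verification of $(BE_1)$–$(BE_4)$ or of $(impl)$ is required.
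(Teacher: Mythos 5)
Your proposal is correct, but it follows a genuinely different route from the paper's. The paper works entirely in the implicative signature: it asserts that $\mathcal{CL}(X^{\prime},\perp)$ is an involutive BE algebra (treating $(BE_1)$--$(BE_4)$ and involutivity as routine) and then explicitly verifies only the axiom $(impl)$, via the one-line computation $(A\ra B)\ra A=((A\cap B^{\perp})^{\perp}\cap A^{\perp})^{\perp}=(A^{\perp})^{\perp}=A$, using $A^{\perp}\subseteq (A\cap B^{\perp})^{\perp}$. You instead verify the ortholattice axioms for $(\mathcal{CL}(X^{\prime},\perp),\cap,\vee,{}^{\perp},\emptyset,X^{\prime})$ and transport the structure through the term-equivalence of Remark \ref{ioml-30-20}, checking that $A\ra B=(A\cap B^{\perp})^{\perp}$ is exactly the image of meet-plus-orthocomplement under $\varphi$. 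Your route buys freedom from checking $(BE_4)$ and $(impl)$ by hand (they come for free from the cited equivalence), at the cost of verifying the lattice structure; in particular you should spell out the one missing line that $\vee$ is genuinely the join, i.e.\ that $A,B\subseteq C$ with $C$ orthoclosed forces $(A^{\perp}\cap B^{\perp})^{\perp}\subseteq C^{\perp\perp}=C$. The remaining verifications (closure under intersection, irreflexivity giving $A\cap A^{\perp}=\emptyset$, De Morgan from $Y^{\perp\perp\perp}=Y^{\perp}$) are all sound, so there is no gap, only a reliance on the external term-equivalence that the paper's direct computation avoids.
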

\begin{proof}
Obviously, $\mathcal{CL}(X^{\prime},\perp)$ is involutive and it verifies the axioms of BE algebras. 
Moreover, $(A\ra B)\ra A=((A\cap B^{\perp})^{\perp}\cap A^{\perp})^{\perp}=(A^{\perp})^{\perp}=A$, 
since $A\cap B^{\perp}\subseteq A$ implies $A^{\perp}\subseteq (A\cap B^{\perp})^{\perp}$.  
Hence, axiom $(impl)$ is also verified.               
\end{proof}

As shown in Proposition \ref{dacey-20}, starting from the set of all orthoclosed subsets of an orthogonality space,  one can construct a quantum structure (in our case an implicative-ortholattice). We will see that, under additional conditions such as $(IOM)$, $(IOM^{\prime})$ or $(IOM^{\prime\prime})$, this structure becomes an implicative-orthomodular lattice, that is, a model of quantum logic.  
Consequently, orthogonality spaces can characterize quantum logic abstractly, without assuming an underlying Hilbert space. 

\begin{example} \label{ooml-60}
Consider the i-OL $(X,\ra,^*,1)$ from Example \ref{dioml-60}, where $X=\{0,a,b,c,d,1\}$. 
Denoting $X^{\prime}=X\setminus \{0\}$, then $(X^{\prime},\perp)$ is an orthogonality space. \\
$(1)$ We can see that $a^{\perp}=\{c,d\}$, $b^{\perp}=\{d\}$, $c^{\perp}=\{a\}$, $d^{\perp}=\{a,b\}$, 
$1^{\perp}=\emptyset$. 
Denote $A=\{a\}$, $B=\{d\}$, $C=\{a,b\}$, $D=\{c,d\}$; hence, 
$\mathcal{CL}(X^{\prime},\perp)=\{\emptyset, A, B, C, D, X^{\prime}\}$. \\
$(2)$ We have $b\wedge_Q c=0$ while $b\not\perp c$; therefore, the condition for $X$ to be an i-OML in 
Proposition \ref{ooml-50} is essential. \\
$(3)$ Since $a\perp d$ and $a\wedge_Q d^*=b\neq a$, this confirms Proposition \ref{ooml-40}. \\
$(4)$ Consider the structure $(\mathcal{CL}(X^{\prime},\perp),\ra,^*,X^{\prime})$ with the operations $\ra$, $\wedge_Q$ 
given below. 
\[
\begin{array}{c|cccccc}
\ra       & \emptyset  & A          & B          & C          & D          & X^{\prime} \\ \hline
\emptyset & X^{\prime} & X^{\prime} & X^{\prime} & X^{\prime} & X^{\prime} & X^{\prime} \\
A         & D          & X^{\prime} & D          & X^{\prime} & D          & X^{\prime} \\
B         & C          & C          & X^{\prime} & C          & X^{\prime} & X^{\prime} \\
C         & B          & X^{\prime} & B          & X^{\prime} & D          & X^{\prime} \\
D         & A          & A          & X^{\prime} & C          & X^{\prime} & X^{\prime} \\
X^{\prime} & \emptyset & A & B & C & D & X^{\prime}
\end{array}
\hspace{10mm}
\begin{array}{c|ccccccc}
\wedge_Q  & \emptyset  & A          & B          & C          & D          & X^{\prime} \\ \hline
\emptyset & \emptyset  & \emptyset  & \emptyset  & \emptyset  & \emptyset  & \emptyset  \\
A         & \emptyset  & A          & \emptyset  & C          & \emptyset  & A          \\
B         & \emptyset  & \emptyset  & B          & \emptyset  & D          & B          \\
C         & \emptyset  & A          & \emptyset  & C          & \emptyset  & C          \\
D         & \emptyset  & \emptyset  & B          & \emptyset  & D          & D          \\
X^{\prime} & \emptyset & A          & B          & C          & D          & X^{\prime}
\end{array}
.
\]

The structure $(\mathcal{CL}(X^{\prime},\perp),\ra,^*,X^{\prime})$ is an i-OL, but not an i-OML. 
Indeed, since $A\wedge_Q (B\ra A)=C\neq A$, condition $(IOM)$ is not satisfied. 
\end{example}

$\vspace*{1.0mm}$

\section{Sasaki projections on implicative-ortholattices} 

Sasaki projections were introduced by U. Sasaki in \cite{Sasaki} as projections on the lattice $L(H)$ of closed  subspaces of a Hilbert space $H$, mapping each subspace to a certain segment of that lattice. 
In quantum logic, the Sasaki projection and its dual can serve as the logical connectives, such as conjunction and implication (see for example \cite{Chajda1}). 
In this section, we adapt the notion of Sasaki projections to the case of implicative-ortholattices and 
study their properties. Using the orthogonality relation and the Sasaki projections on implicative-ortholattices, 
we prove a characterization theorem for implicative-orthomodular lattices. 
We mention that Sasaki projections play an important role in the definition and study of adjointness in 
implicative-ortholattices. 

\begin{definition} \label{sioml-10} 
\emph{
Let $X$ be an implicative-ortholattice and let $a\in X$. A \emph{Sasaki projection} on $X$ is a map 
$\varphi_a:X\longrightarrow X$ defined by: for all $x\in X$, $\varphi_a(x)=x\wedge_Q a$. 
}
\end{definition}

Denote $\mathcal{SP}(X)=\{\varphi_a\mid a\in X\}$ the set of all Sasaki projections on $X$. 
We write $\varphi_a x$, $\varphi_a\varphi_b x$ and $\varphi^*_a x$, instead of $\varphi_a(x)$,  
$(\varphi_a\circ \varphi_b)(x)$ and $(\varphi_a x)^*$, respectively. 

\begin{proposition} \label{sioml-20} 
Let $X$ be an implicative-ortholattice and let $a\in X$. The following hold for all $x\in X$: \\
$(1)$ $\varphi_x x=x$, $\varphi_1 x=\varphi_x 1=x$, $\varphi_0 x=\varphi_x 0=0$, $\varphi_x x^*=\varphi_{x^*} x=0;$ \\
$(2)$ $a\le_L x$ implies $\varphi_a x=a;$ 
$(3)$ $\varphi_{\varphi_a x}x=\varphi_a x;$ \\
$(4)$ $a\le_Q x$ implies $\varphi_x a=a;$ 
$(5)$ $\varphi_a$ is monotone.  
\end{proposition}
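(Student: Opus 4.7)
The plan is to expand each assertion via the formula $\varphi_a(x) = x\wedge_Q a = ((x^*\ra a^*)\ra a^*)^*$, relying on the BE-algebra axioms (especially $(BE_4)$), the identity $(iG)$ in the form $x\ra x^* = x^*$, involutivity, and the chain $\le_Q\,\subseteq\,\le_L\,\subseteq\,\le$ recalled in Section~2.

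The identities in (1) are pure rewrites. For instance, $\varphi_x x = ((x^*\ra x^*)\ra x^*)^* = (1\ra x^*)^* = x$, and $\varphi_x x^* = ((x\ra x^*)\ra x^*)^* = (x^*\ra x^*)^* = 0$ by $(iG)$; the $\varphi_{x^*}x$, $\varphi_0$, $\varphi_1$ clauses reduce analogously using $0\ra y = 1$, $y\ra 1 = 1$, $(BE_3)$, and involutivity. Part (4) is immediate from the definition of $\le_Q$: the hypothesis $a\le_Q x$ reads $a = a\wedge_Q x$, which is exactly $\varphi_x a = a$.

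For (2), from $a\le_L x$ Proposition~\ref{qbe-20}(6) yields $a\le x$, i.e.\ $a\ra x = 1$. Applying Lemma~\ref{qbe-10}(6) then gives $x^*\ra a^* = a\ra x = 1$, so
\[
(x^*\ra a^*)\ra a^* = 1\ra a^* = a^*,
\]
whence $\varphi_a x = (a^*)^* = a$. For (3), set $b = \varphi_a x$, so that $b^* = (x^*\ra a^*)\ra a^*$; by $(BE_4)$ followed by $(BE_1)$,
\[
x^*\ra b^* = x^*\ra\bigl((x^*\ra a^*)\ra a^*\bigr) = (x^*\ra a^*)\ra (x^*\ra a^*) = 1,
\]
so $\varphi_b x = ((x^*\ra b^*)\ra b^*)^* = (1\ra b^*)^* = b$. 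Finally, (5) is a direct instance of Lemma~\ref{iol-30-20}(9): if $x\le_L y$, then $\varphi_a x = x\wedge_Q a\le_L y\wedge_Q a = \varphi_a y$.

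The main obstacle, such as it is, is conceptual rather than computational: one must resist treating $(x^*\ra a^*)\ra a^*$ as a $\vee_Q$-join requiring an orthomodular-type cancellation. In (2) only the weak consequence $a\le x$ of $a\le_L x$ is used, collapsing the inner implication to $1$; in (3) the $(BE_4)$ swap transposes the outer $x^*$ into the inner slot, producing $(x^*\ra a^*)\ra(x^*\ra a^*) = 1$. Once these two collapses are recognized, every item reduces to a single rewrite, and no appeal to orthomodularity is needed anywhere in the proposition.
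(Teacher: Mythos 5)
Your proof is correct and follows essentially the same route as the paper: the computation for (3) is the paper's own $(BE_4)$-collapse $x^*\ra((x^*\ra a^*)\ra a^*)=1$, part (5) invokes the same Lemma~\ref{iol-30-20}(9), and parts (1), (2), (4), which the paper dismisses as straightforward, reduce exactly as you describe. No gaps.
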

\begin{proof} 
$(1)$, $(2)$ and $(4)$ are straightforward. \\
$(3)$ We have: \\
$\hspace*{2.00cm}$ $\varphi_{\varphi_a x}x=((x^*\ra \varphi^*_a x)\ra \varphi^*_a x)^*
                   =((x^*\ra (x\wedge_Q a)^*)\ra \varphi^*_a x)^*$ \\
$\hspace*{3.20cm}$ $=((x^*\ra ((x^*\ra a^*)\ra a^*))\ra \varphi^*_a x)^*$ \\ 
$\hspace*{3.20cm}$ $=(((x^*\ra a^*)\ra (x^*\ra a^*))\ra \varphi^*_a x)^*$ \\
$\hspace*{3.20cm}$ $=(1\ra \varphi^*_a x)^*=\varphi_a x$. \\
$(5)$ Let $x_1,x_2\in X$ such that $x_1\le_L x_2$. 
Since by Lemma \ref{iol-30-20}$(9)$, $x_1\wedge_Q a\le_L x_2\wedge_Q a$, we have 
$\varphi_a x_1\le_L \varphi_a x_2$. Hence, $\varphi_a$ is monotone. 
\end{proof}

\begin{proposition} \label{sioml-30} Let $X$ be an implicative-orthomodular lattice. 
The following hold for all $a,b,x,y\in X$: \\
$(1)$ if $\varphi_a=\varphi_b=\varphi_1$, then $\varphi_{a\wedge_Q b}=\varphi_1;$   
$(2)$ $\varphi_a\varphi_a x=\varphi_a x;$ \\
$(3)$ $\varphi_a\varphi^*_a x=(a\ra x)^*;$  
$(4)$ $\varphi_a\varphi^*_a x\le_L x^*;$ \\
$(5)$ $\varphi_a x\le_L y^*$ iff $\varphi_a y\le_L x^*;$ 
$(6)$ $\varphi_a \varphi_b a=\varphi_a b;$ \\
$(7)$ if $a=a\wedge_Q b$, then $\varphi_a=\varphi_a\varphi_b$.  
\end{proposition}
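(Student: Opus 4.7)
The unifying strategy is to first secure that in an i-OML $\wedge_Q$ behaves as the genuine lattice meet — that is, it is commutative, associative and idempotent — and then to read each of the seven statements as a small manipulation of meets (with the exception of (3), which requires one further rewrite). By Corollary~\ref{dioml-05-55} the relations $\le_L$ and $\le_Q$ coincide in an i-OML, and combining Proposition~\ref{ioml-50}(6) with Proposition~\ref{ioml-50-10}(1) shows that $x\wedge_Q y$ is the infimum of $\{x,y\}$ for $\le_L$, whence the three lattice identities follow.

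With this in place, (1) follows by evaluating the hypothesis at $x=1$: $a=\varphi_a 1=\varphi_1 1=1$, and similarly $b=1$, so $a\wedge_Q b=1$ and $\varphi_{a\wedge_Q b}=\varphi_1$. For (2), $\varphi_a x=x\wedge_Q a\le_L a$ together with the meet property gives $\varphi_a x\wedge_Q a=\varphi_a x$; alternatively one may cite Proposition~\ref{sioml-20}(4). For (6), $(a\wedge_Q b)\wedge_Q a=a\wedge_Q b=\varphi_a b$ by commutativity and idempotence; for (7), $(x\wedge_Q b)\wedge_Q a=x\wedge_Q(a\wedge_Q b)=x\wedge_Q a$ by associativity together with the hypothesis $a\wedge_Q b=a$.

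Part (3) is the main computational step. Expanding $\varphi_a\varphi_a^* x=(x\wedge_Q a)^*\wedge_Q a$, I use that $x\wedge_Q a\le_L a$, so by the definition of $\le_L$ we have $(x\wedge_Q a)\ra a^*=(x\wedge_Q a)^*$. Substituting, the expression reduces to $((x^*\vee_Q a^*)\ra a^*)^*$, and Proposition~\ref{ioml-50}(9) simplifies this to $(x^*\ra a^*)^*$; Lemma~\ref{qbe-10}(6) then identifies $x^*\ra a^*$ with $a\ra x$, giving $(a\ra x)^*$. Part (4) is immediate from (3): Lemma~\ref{iol-30-20}(3) yields $x\le_L a\ra x$, and Lemma~\ref{iol-30-20}(1) reverses this to $(a\ra x)^*\le_L x^*$.

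For (5), Lemma~\ref{ooml-20}(7) rewrites $\varphi_a x\le_L y^*$ as $(x\wedge_Q a)\perp y$, and Proposition~\ref{ooml-50} further rewrites this as $(x\wedge_Q a)\wedge_Q y=0$; commutativity and associativity of $\wedge_Q$ in an i-OML render this condition symmetric in $x$ and $y$, so reversing the chain of equivalences produces $\varphi_a y\le_L x^*$. The main obstacle will be (3), where one must carefully avoid confusing the three orders $\le$, $\le_L$ and $\le_Q$, and where the pivotal rewrite $(x\wedge_Q a)\ra a^*=(x\wedge_Q a)^*$ relies essentially on orthomodularity through the identification $\le_L=\le_Q$.
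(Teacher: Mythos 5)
Your unifying strategy rests on a false premise: in an implicative-orthomodular lattice, $\wedge_Q$ is \emph{not} commutative or associative, and it is not the infimum for $\le_L$. Proposition \ref{ioml-50}$(6)$ gives $x\wedge_Q y\le_L y$, but \emph{not} $x\wedge_Q y\le_L x$; by Theorem \ref{cioml-100-10} the latter holds for all pairs only in an implicative-Boolean algebra, and by Theorem \ref{cioml-100} (or Corollary \ref{cioml-60-20}) commutativity of $\wedge_Q$ likewise characterizes the Boolean case. The operation $y\wedge_Q x=\varphi_x y$ is the Sasaki projection of $y$ onto $x$ (in ortholattice terms $(y\vee x^{\prime})\wedge x$), which coincides with the lattice meet $(x\ra y^*)^*$ exactly when $x\,\mathcal{C}\,y$ (Proposition \ref{cioml-60-10}). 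The paper's own Example \ref{dioml-60-10} refutes your premise: there $a\wedge_Q c=c$ while $c\wedge_Q a=a$. Consequently your arguments for $(5)$, $(6)$ and $(7)$ collapse. In $(6)$ you need $(a\wedge_Q b)\wedge_Q a=b\wedge_Q a$, and your derivation passes through $a\wedge_Q b=b\wedge_Q a$ and $(a\wedge_Q b)\wedge_Q a=a\wedge_Q b$, neither of which is available; the intended computation uses $a\ra(a\wedge_Q b)=a\ra b$ (Proposition \ref{ioml-50-10}$(5)$). In $(7)$ the associativity step fails; the correct tool is Proposition \ref{ioml-50-10}$(2)$, applied after observing that $a=a\wedge_Q b$ gives $a\le_Q b$, hence $a\le_L b$. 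In $(5)$ the condition $(x\wedge_Q a)\wedge_Q y=0$ is not symmetric in $x$ and $y$ without the commutativity and associativity you cannot invoke; the working route is via monotonicity of $\varphi_a$ together with part $(4)$: from $\varphi_a x\le_L y^*$ infer $y\le_L\varphi_a^*x$, hence $\varphi_a y\le_L\varphi_a\varphi_a^*x\le_L x^*$, and symmetrically.

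Parts $(1)$--$(4)$ of your proposal do not actually depend on the false premise and are correct. Your proof of $(1)$, evaluating at $1$ to force $a=b=1$, is valid and in fact simpler than the paper's (which shows $x\le_L a\wedge_Q b$ for all $x$ via Proposition \ref{ioml-50-10}$(1)$). Your $(2)$ works once the vague ``meet property'' is replaced by the definition of $\le_Q$ together with Corollary \ref{dioml-05-55}, or by the citation of Proposition \ref{sioml-20}$(4)$ you offer. Your computation for $(3)$ -- using $x\wedge_Q a\le_L a$ to rewrite $(x\wedge_Q a)\ra a^*$ as $(x\wedge_Q a)^*$ and then applying Proposition \ref{ioml-50}$(9)$ -- is a correct and legitimate alternative to the paper's computation, and $(4)$ is derived exactly as in the paper.
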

\begin{proof}
$(1)$ From $\varphi_a=\varphi_b=\varphi_1$, we have $x\wedge_Q a=x\wedge_Q b=x$; thus, $x\le_Q a,b$, and hence $x\le_L a,b$.  
Applying Proposition \ref{ioml-50-10}$(1)$, we get $x\le_L a\wedge_Q b$; hence, $x\wedge_Q (a\wedge_Q b)=x$ for all $x\in X$. 
It follows that $\varphi_{a\wedge_Q b}=\varphi_1$. \\  
$(2)$ We have: \\
$\hspace*{2.00cm}$ $\varphi_a\varphi_a x=(x\wedge_Q a)\wedge_Q a=(((x\wedge_Q a)^*\ra a^*)\ra a^*)^*$ \\
$\hspace*{3.20cm}$ $=((a\ra (x\wedge_Q a))\ra a^*)^*=((a\ra x)\ra a^*)^*$ (Prop. \ref{ioml-50}$(1)$) \\
$\hspace*{3.20cm}$ $=((x^*\ra a^*)\ra a^*)^*=x\wedge_Q a=\varphi_a x$. \\
$(3)$ Similarly we get: \\
$\hspace*{2.00cm}$ $\varphi_a(\varphi_a x)^*=(x\wedge_Q a)^*\wedge_Q a=(((x\wedge_Q a)\ra a^*)\ra a^*)^*$ \\
$\hspace*{3.55cm}$ $=((a\ra (x\wedge_Q a)^*)\ra a^*)^*=((a\ra ((x^*\ra a^*)\ra a^*))\ra a^*)^*$ \\
$\hspace*{3.55cm}$ $=(((x^*\ra a^*)\ra (a\ra a^*))\ra a^*)^*=((x\wedge_Q a)^*\ra a^*)^*$ (by $(iG)$) \\
$\hspace*{3.55cm}$ $=(a\ra (x\wedge_Q a))^*=(a\ra x)^*$ (Prop. \ref{ioml-50}$(1)$). \\
$(4)$ It follows from $(3)$, since $x\le_L a\ra x$ implies $(a\ra x)^*\le_L x^*$. \\
$(5)$ From $\varphi_a x\le_L y^*$, we have $y\le_L (\varphi_a x)^*$. 
Since $\varphi_a$ is monotone, and applying $(4)$, we get $\varphi_a y\le_L \varphi_a(\varphi_a x)^*\le_L x^*$. 
The converse follows similarly. \\
$(6)$ Applying Proposition \ref{ioml-50-10}$(5)$, we get: \\
$\hspace*{2.00cm}$ $\varphi_a \varphi_b a=(\varphi_b a)\wedge_Q a=(a\wedge_Q b)\wedge_Q a=(((a\wedge_Q b)^*\ra a^*)\ra a^*)^*$ \\
$\hspace*{3.15cm}$ $=((a\ra (a\wedge_Q b))\ra a^*)^*=((a\ra b)\ra a^*)^*$ \\
$\hspace*{3.15cm}$ $=((b^*\ra a^*)\ra a^*)^*=b\wedge_Q a=\varphi_a b$. \\
$(7)$ If $a=a\wedge_Q b$, then $a\le_Q b$; hence, $a\le_L b$. By Proposition \ref{ioml-50-10}$(2)$, we have 
$x\wedge_Q a=(x\wedge_Q b)\wedge_Q a$ for all $x\in X$; thus, $\varphi_a=\varphi_a\varphi_b$. 
\end{proof}

\begin{proposition} \label{sioml-40} Let $X$ be an implicative-orthomodular lattice. 
The following hold for all $a,b,x,y\in X$: \\
$(1)$ $\varphi_a x=x$ iff $x\le_L a;$ 
$(2)$ $\varphi_a x=0$ iff $x\le_L a^*;$ \\
$(3)$ if $a\le_L b$, then $\varphi_a\varphi_b x=\varphi_a x;$ 
$(4)$ $\varphi^*_a x=\varphi_a x\ra y$ iff $\varphi^*_a y=\varphi_a y\ra x;$ \\
$(5)$ $\varphi_a^2=\varphi_0$ iff $\varphi_a 1\le_L \varphi_a^*1;$ 
$(6)$ $\varphi_a x\perp y$ iff $x\perp \varphi_a y;$ 
$(7)$ $x\perp y$ iff $\varphi_xy=0;$ \\
$(8)$ if $x\perp a$, then $\varphi_ax\perp a^*$.        
\end{proposition}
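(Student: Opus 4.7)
The proof naturally breaks into six short parts (1,2,3,5,7,8), one key identity (6), and a short derivation of (4) from (6).

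For the six short parts: (1) unfolds $\varphi_a x = x$ as $x \wedge_Q a = x$, i.e.\ $x \le_Q a$, which coincides with $x \le_L a$ in an i-OML by Corollary~\ref{dioml-05-55}. For (2), $\varphi_a x = 0$ is $x \wedge_Q a = 0$, which by Remark~\ref{dioml-05-70}(2) is exactly $x \le_L a^*$. For (3), apply Proposition~\ref{ioml-50-10}(2) with $a, b, x$ playing the roles of its $x, y, z$: from $a \le_L b$ one obtains $(x \wedge_Q b) \wedge_Q a = x \wedge_Q a$, i.e.\ $\varphi_a \varphi_b x = \varphi_a x$. For (5), Proposition~\ref{sioml-30}(2) gives $\varphi_a \circ \varphi_a = \varphi_a$, so $\varphi_a^2 = \varphi_0$ iff $\varphi_a = \varphi_0$; evaluating at $1$ this is iff $a = \varphi_a 1 = \varphi_0 1 = 0$. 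The other condition $\varphi_a 1 \le_L \varphi^*_a 1$ reads $a \le_L a^*$, which unfolds via the definition of $\le_L$ to $a = (a \ra a)^* = 1^* = 0$; the two conditions coincide. For (7), $x \perp y$ iff $y \perp x$ by symmetry of $\perp$ (Lemma~\ref{ooml-20}(1)) iff $y \wedge_Q x = 0$ by Proposition~\ref{ooml-50} iff $\varphi_x y = 0$ by definition. For (8), $x \perp a$ yields $\varphi_a x = x \wedge_Q a = 0$ via Proposition~\ref{ooml-50}, whence $\varphi_a x \perp a^*$ by Lemma~\ref{ooml-20}(3).

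The core content is (6). Applying Proposition~\ref{ooml-50} on both sides, (6) is equivalent to
\[
(x \wedge_Q a) \wedge_Q y = 0 \;\Longleftrightarrow\; x \wedge_Q (y \wedge_Q a) = 0 .
\]
I plan to establish this via the Sasaki adjunction in i-OML, namely $\varphi_a x \le_L b \Leftrightarrow x \le_L b \vee_Q a^*$, which in the implicative signature reads $(x \wedge_Q a) \le_L b \Leftrightarrow x \le_L (b \ra a^*) \ra a^*$. With the adjunction in hand, Lemma~\ref{ooml-20}(7) together with the identity $(y \wedge_Q a)^* = y^* \vee_Q a^*$ (immediate from the definitions of $\wedge_Q$ and $\vee_Q$) translates both sides of the claimed equivalence into the common condition $x \le_L y^* \vee_Q a^*$. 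The adjunction itself should follow from Theorem~\ref{dioml-05-60}(b), $(x \ra y) \ra (y \wedge_Q x) = x$, combined with Proposition~\ref{ioml-50}(10), since this identity encodes exactly the orthomodular absorption that drives the Sasaki adjunction in the ortholattice reformulation.

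With (6) proved, (4) is immediate: the condition $\varphi^*_a x = \varphi_a x \ra y$ is by definition $\varphi_a x \perp y$; by (6) this equals $x \perp \varphi_a y$; by the symmetry of $\perp$ (Lemma~\ref{ooml-20}(1)) this equals $\varphi_a y \perp x$, i.e.\ $\varphi^*_a y = \varphi_a y \ra x$. The principal obstacle I anticipate is the algebraic proof of the Sasaki adjunction underpinning (6): in the ortholattice language of Remark~\ref{ioml-30-20} it is a standard consequence of orthomodularity together with the De~Morgan formula for $\varphi_a$, but in the implication-based signature one must be careful since $\wedge_Q$ is not literally commutative in an i-OML. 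My intention is to work only with Proposition~\ref{ioml-50}(10) as a partial-associativity substitute and the symmetry of $\perp$ as a ``commutativity at zero'' substitute, and to derive the adjunction within this restricted toolkit.
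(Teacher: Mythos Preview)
Your six short parts are correct, and for (8) your argument (reduce to $\varphi_a x=0$ and invoke $0\perp a^*$) is in fact simpler than the paper's direct computation.

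The difference with the paper lies in the order and method for (4) and (6). The paper proves (4) \emph{first}, directly, using only monotonicity of $\varphi_a$ (Proposition~\ref{sioml-20}(5)) and the inequality $\varphi_a\varphi_a^* x\le_L x^*$ (Proposition~\ref{sioml-30}(4)): from $\varphi_a x\le_L y^*$ one gets $y\le_L \varphi_a^* x$, hence $\varphi_a y\le_L \varphi_a\varphi_a^* x\le_L x^*$. Then (6) is declared an immediate consequence of (4). You instead aim at (6) first via the full Sasaki adjunction $\varphi_a x\le_L b\Leftrightarrow x\le_L b\vee_Q a^*$, and then read off (4). Your route is valid, but note that the special case of the adjunction you actually need (take $b=y^*$, so that $b\vee_Q a^*=(y\wedge_Q a)^*=(\varphi_a y)^*$) is precisely Proposition~\ref{sioml-30}(5), already proved in two lines by the argument just described. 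So the ``principal obstacle'' you anticipate is already removed by existing results; you do not need Theorem~\ref{dioml-05-60}(b) or Proposition~\ref{ioml-50}(10) here, and no careful handling of the non-commutativity of $\wedge_Q$ is required. The paper's ordering $(4)\Rightarrow(6)$ is therefore the more economical one, while yours trades that economy for an explicit adjunction statement that, although true in any i-OML, is stronger than what the proposition demands.
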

\begin{proof}
$(1)$ If $\varphi_a x=x$, then $x\wedge_Q a=x$, and hence $x\le_Q a$; thus, $x\le_L a$. 
Conversely, if $x\le_L a$, then $x=(x\ra a^*)^*$ and applying Proposition \ref{ioml-50}$(9)$, we have: 
$\varphi_a x=x\wedge_Q a=((x^*\ra a^*)\ra a^*)^*=(((x\ra a^*)\ra a^*)\ra a^*)^*=((x\vee_Q a^*)\ra a^*)^*=(x\ra a^*)^*=x$. \\
$(2)$ Assume that $\varphi_a x=0$, that is, $x\wedge_Q a=0$.  It follows that $(x^*\ra a^*)\ra a^*=1$; hence, 
$x^*\ra a^*\le a^*$. Since $a^*\le_L x^*\ra a^*$, by Theorem \ref{dioml-05-60} we get $x^*\ra a^*=a^*$. 
It follows that $a=(a\ra x)^*$, that is, $a\le_L x^*$; hence, $x\le_L a^*$. 
Conversely, $x\le_L a^*$ implies $x=(x\ra a)^*$, thus, $x^*=x\ra a=a^*\ra x^*$. 
Hence, by $(impl)$, $\varphi_a x=x\wedge_Q a=((x^*\ra a^*)\ra a^*)^*=(((a^*\ra x^*)\ra a^*)\ra a^*)^*=(a^*\ra a^*)^*=0$. \\
$(3)$ If $a\le_L b$, then $a^*=a\ra b^*$ and we have: 
$\varphi_a\varphi_b x=(x\wedge_Q b)\wedge_Q a=(((x\wedge_Q b)^*\ra a^*)\ra a^*)^*$. 
Using Proposition \ref{ioml-50}$(9)$, we get: 
$(x\wedge_Q b)^*\ra a^*=(x^*\vee_Q b^*)\ra (a\ra b^*)=a\ra ((x^*\vee_Q b^*)\ra b^*)=a\ra (x^*\ra b^*)=x^*\ra (a\ra b^*)=
x^*\ra a^*$. It follows that $\varphi_a\varphi_b x=((x^*\ra a^*)\ra a^*)^*=x\wedge_Q a=\varphi_a x$. \\
$(4)$ If $\varphi^*_a x=\varphi_a x\ra y$, then $\varphi_a x\le_L y^*$; hence, $y\le_L \varphi^*_a x$. 
Applying Propositions \ref{sioml-20}$(5)$, \ref{sioml-30}$(4)$, we get 
$\varphi_a y\le_L \varphi_a\varphi^*_a x\le_L x^*$; hence, $\varphi^*_a y=\varphi_a y\ra x$. 
The converse follows similarly. \\
$(5)$ If $\varphi_a^2=\varphi_0$, then $\varphi_a(\varphi_a 1)=0$, and by $(2)$, $\varphi_a 1\le_L a^*=\varphi_a^*1$. 
Conversely, assume that $\varphi_a 1\le_L \varphi_a^*1$. 
Since $x\le_L 1$, applying Proposition \ref{sioml-30}$(4)$, we get 
$\varphi_a^2 x\le_L\varphi_a^2 1=\varphi_a(\varphi_a 1)\le_L \varphi_a(\varphi_a^*1)\le_L 1^*=0$.  
It follows that $\varphi_a^2 x=0$ for all $x\in X$. Hence, $\varphi_a^2=\varphi_0$. \\
$(6)$ It is a consequence of $(4)$. \\
$(7)$ It follows from Proposition \ref{ooml-50}. \\
$(8)$ Since $x\perp a$ implies $x^*=x\ra a$, it follows that: \\
$\hspace*{2.00cm}$ $a^*\ra \varphi_a x=a^*\ra \varphi_a(x\ra a)^*=a^*\ra ((x\ra a)^*\wedge_Q a)$ \\
$\hspace*{3.70cm}$ $=a^*\ra (((x\ra a)\ra a^*)\ra a^*)^*=a^*\ra (((a^*\ra x^*)\ra a^*)\ra a^*)^*$ \\
$\hspace*{3.70cm}$ $=a^*\ra (a^*\ra a^*)^*=a$ (by $(impl)$). \\
Hence, $a^*\perp \varphi_ax$, that is, $\varphi_ax \perp a^*$. 
\end{proof}

In the next result, by providing a characterization theorem of implicative-orthomodular lattices, we show how 
orthogonality contributes to the establishment of certain features of quantum logic. 

\begin{theorem} \label{sioml-40-10} An implicative-ortholattice $X$ is an implicative-orthomodular lattice if and  
only if $\varphi_yx\perp z$ implies $x\perp \varphi_y z$ for all $x, y, z\in X$. 
\end{theorem}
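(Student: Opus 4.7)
The forward direction is immediate from Proposition \ref{sioml-40}(6), which already supplies the full biconditional $\varphi_y x \perp z \Leftrightarrow x \perp \varphi_y z$ whenever $X$ is an implicative-orthomodular lattice.

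For the converse, I first note that by the symmetry of $\perp$ (renaming $x \leftrightarrow z$) the assumed implication automatically upgrades to this biconditional. I also record a \emph{separation principle} for i-OLs: two elements sharing the same orthogonality complement must coincide. Indeed, if $y_1^{\perp} = y_2^{\perp}$, setting $z = y_1^*$ in $y_1 \perp z \Leftrightarrow y_2 \perp z$ and using $y_1 \perp y_1^*$ (reflexivity of $\le_L$) yields $y_2 \le_L y_1$; the symmetric argument gives $y_1 \le_L y_2$, and antisymmetry of $\le_L$ (Proposition \ref{qbe-20}(7)) forces $y_1 = y_2$. By Theorem \ref{dioml-05-50} it suffices to show $\le_L\,\subseteq\,\le_Q$, so I fix $x \le_L y$ and aim to prove $\varphi_y x = x$. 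By separation, this reduces to verifying that $\varphi_y x$ and $x$ have the same orthogonality complement, i.e.\ --- via the biconditional --- to the equivalence $x \perp z \Leftrightarrow x \perp (z \wedge_Q y)$ for every $z \in X$.

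For $(\Rightarrow)$ I stay inside the i-OL calculus. From $x \le_L z^*$ and $x \le_L y$, Proposition \ref{qbe-20}(8) gives $x \le_L (z^* \ra y^*)^*$; moreover, a one-step computation shows that $u^* \le_L u \ra v$ for every $u,v \in X$ (since $u^* \ra (u \ra v)^* = (u \ra v) \ra u = u$ by Lemma \ref{qbe-10}(3) and $(impl)$). Applying this with $u = z^* \ra y^*$ and $v = y^*$ yields $(z^* \ra y^*)^* \le_L (z^* \ra y^*) \ra y^* = (z \wedge_Q y)^*$, and transitivity of $\le_L$ then produces $x \le_L (z \wedge_Q y)^*$, i.e.\ $x \perp (z \wedge_Q y)$.

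The direction $(\Leftarrow)$ is the crux, and I expect it to be the main obstacle. My plan is to apply the biconditional a second time with the roles of $x$ and $z$ interchanged: this converts $x \perp (z \wedge_Q y)$ into $z \perp (x \wedge_Q y)$. A short i-OL calculation establishes $x \le_L x \wedge_Q y$ from $x \le_L y$: since $x \le_L y$ gives $x \ra y^* = x^*$, one has
\[x \ra (x \wedge_Q y)^* = x \ra ((x^* \ra y^*) \ra y^*) = (x^* \ra y^*) \ra (x \ra y^*) = (x^* \ra y^*) \ra x^* = x^*\]
by $(BE_4)$ and $(impl)$. Consequently, $z \perp (x \wedge_Q y)$, i.e.\ $x \wedge_Q y \le_L z^*$, forces $x \le_L z^*$, that is, $x \perp z$. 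This closes the equivalence, so $\varphi_y x = x$, hence $x \le_Q y$, and $X$ is therefore an implicative-orthomodular lattice.
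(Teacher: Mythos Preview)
Your proof is correct, but it takes a genuinely different route from the paper's.

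The paper, given $x\le_L y$, works with the projection $\varphi_{x^*}$ rather than $\varphi_y$: it computes $\varphi_{x^*}y=(y\ra x)^*$ directly from $(impl)$, checks that $\varphi_{x^*}y\perp(y\ra x)$, applies the hypothesis \emph{once} (as a one-sided implication, with the concrete choices $x:=y$, $y:=x^*$, $z:=y\ra x$) to obtain $y\perp\varphi_{x^*}(y\ra x)$, and then unwinds this into $y\le_L y\vee_Q x\le_L y$, hence $y=y\vee_Q x$, invoking Theorem~\ref{dioml-05-50}$(c)$. Your argument instead first upgrades the hypothesis to a biconditional by symmetry, extracts a separation principle ($y_1^{\perp}=y_2^{\perp}\Rightarrow y_1=y_2$), and then shows $\varphi_y x$ and $x$ have the same orthogonality complement via two applications of the biconditional together with the i-OL fact $x\le_L x\wedge_Q y$, landing on Theorem~\ref{dioml-05-50}$(b)$. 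The paper's approach is more computational but economical --- a single, specific instantiation of the hypothesis suffices --- whereas your separation principle is a clean, reusable observation that makes the structure of the argument more transparent and avoids the longer symbolic calculations. Both are valid; they simply trade explicit computation for conceptual overhead.
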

\begin{proof}
If $X$ is an i-OML and $x,y,z\in X$, then by Proposition \ref{sioml-40}$(6)$, 
$\varphi_yx\perp z$ implies $x\perp \varphi_y z$. 
Conversely, let $X$ be an i-OL such that $\varphi_yx\perp z$ implies $x\perp \varphi_y z$ for all $x, y, z\in X$. 
Let $x\le_L y$, that is, $x=(x\ra y^*)^*$. 
It follows that: \\
$\hspace*{2.00cm}$ $\varphi_{x^*}y=y\wedge_Q x^*=((y^*\ra x)\ra x)^*=((y^*\ra (x\ra y^*)^*)\ra x)^*$ \\
$\hspace*{3.00cm}$ $=(((y\ra x^*)\ra y)\ra x)^*=(y\ra x)^*$ (by $(impl)$), \\ 
$\hspace*{2.00cm}$ $\varphi_{x^*}y\ra (y\ra x)=(y\ra x)^*\ra (y\ra x)=y\ra x=\varphi_{x^*}^*y$ (by $(iG)$). \\
Hence, $\varphi_{x^*}y\perp (y\ra x)$, and by hypothesis, $\varphi_{x^*}(y\ra x)\perp y$, that is, 
$\varphi_{x^*}^*(y\ra x)=\varphi_{x^*}(y\ra x)\ra y$. 
It follows that: \\
$\hspace*{1.20cm}$ $y\le_L \varphi_{x^*}(y\ra x)\ra y=\varphi_{x^*}^*(y\ra x)=((y\ra x)\wedge_Q x^*)^*=(y\ra x)^*\vee_Q x$ \\
$\hspace*{5.00cm}$ $=((y\ra x)^*\ra x)\ra x=(x^*\ra (x^*\ra y^*))\ra x$ \\
$\hspace*{5.00cm}$ $=(x^*\ra y^*)\ra x$ (by $(pi)$) \\
$\hspace*{5.00cm}$ $=(y\ra x)\ra x=y\vee_Q x$. \\
On the other hand, from $x\le_L y$ and using $(impl)$, we obtain $y\vee_Q x=(y\ra x)\ra x\le_L(y\ra x)\ra y=y$. 
Hence, $x\le_L y$ implies $y=y\vee_Q x$, and by Theorem \ref{dioml-05-50}, $X$ is an i-OML. 
\end{proof}

A \emph{projection} is a map $f$ from a set $X$ to $X$ satisfying $f\circ f=f$ (see for example \cite{Chajda1}). 
Given an implicative-orthomodular lattice $X$, by Propositions \ref{sioml-20}, \ref{sioml-30}, \ref{sioml-40} 
a Sasaki projection $\varphi_a:X\longrightarrow X$ is a monotone projection from $X$ onto $[0, a]$. 
Moreover, the \emph{dual} $\bar{\varphi}_a$ of $\varphi_a$ defined by 
$\bar{\varphi}_a x=(\varphi_a x^*)^*=x\vee_Q a^*$ is a monotone projection from $X$ onto $[a^*, 1]$. \\

Given an orthomodular lattice $(X,\wedge,\vee,^{\prime},0,1)$ and $x, y\in X$, according to \cite{Holl1} and \cite{Foulis2}, $x$ \emph{commutes} with $y$ if $(x\wedge y)\vee (x\wedge y^{\prime})=x$. 
Using the commutativity between two elements of an orthomodular lattice $X$, D.J. Foulis \cite{Foulis2} and 
S.S. Holland \cite{Holl1}, independently, proved that, given a triple of elements of $X$, if at least one 
of them commutes with the other two, then the triple is distributive. 
Based on Sasaki projections, we define the commutativity $\mathcal{C}$ between two elements of implicative-ortholattices, and we study their properties. We show that an implicative-ortholattice $X$ is an implicative-orthomodular lattice if and only if for all $x,y\in X$, $x\mathcal{C} y$ implies $y\mathcal{C} x$. 
Moreover, we prove that the composition of two Sasaki projections $\varphi_a$, $\varphi_b$ on an implicative-orthomodular lattice is commutative if and only if $a$ commutes with $b$. 
Commutativity between two elements plays an crucial role in the study of distributivity of an implicative-orthomodular 
lattice: if any two elements commute, then the implicative-orthomodular lattice is distributive; that is, it 
becomes an implicative-Boolean algebra. In other words, in this case, the quantum logic becomes classical logic. 

\begin{definition} \label{cioml-10} 
\emph{
Let $X$ be an implicative-ortholattice and let $x,y\in X$. We say that $x$ \emph{commutes} with $y$, 
denoted by $x\mathcal{C} y$, if $\varphi_x y=(x\ra y^*)^*$. 
}
\end{definition}

\begin{lemma} \label{cioml-20} Let $X$ be an implicative-ortholattice. 
The following hold for all $x,y\in X$: \\
$(1)$ $x\mathcal{C} x$, $x\mathcal{C} 0$, $0\mathcal{C} x$, $x\mathcal{C} 1$, $1\mathcal{C} x$, 
$x\mathcal{C} x^*$, $x^*\mathcal{C} x;$ \\
$(2)$ if $x\le_L y$ or $x\le_L y^*$, then $x\mathcal{C} y;$ \\
$(3)$ $x\mathcal{C} (y\ra x)$, $x\mathcal{C} (x^*\ra y)$ and $y\mathcal{C} (x^*\ra y)$.
\end{lemma}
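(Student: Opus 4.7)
The plan is to verify each of the three items by reducing them to direct computations with $\wedge_Q$, using the definition $\varphi_x y = y\wedge_Q x = ((y^*\ra x^*)\ra x^*)^*$, the identity $(iG)$, and the results already collected in Lemmas \ref{qbe-10}, \ref{iol-30-20}, and Proposition \ref{qbe-20}. At no point do I expect to need the orthomodular axiom $(IOM)$, so everything should work in a general i-OL.

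For (1), each identity is a one-line check. For instance $\varphi_x x = ((x^*\ra x^*)\ra x^*)^* = (1\ra x^*)^* = x$ and $(x\ra x^*)^* = (x^*)^* = x$ by $(iG)$; similarly $\varphi_x x^* = x^*\wedge_Q x = ((x\ra x^*)\ra x^*)^* = (x^*\ra x^*)^* = 0$ while $(x\ra x^{**})^* = (x\ra x)^* = 0$. The cases with $0$ and $1$ follow the same pattern using $(BE_2), (BE_3)$, and $x\wedge_Q 1=1\wedge_Q x=x$.

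The work is in (2), which has two subcases. If $x\le_L y$, then by Lemma \ref{iol-30-20}(1) we have $y^*\le_L x^*$, and then by Proposition \ref{qbe-20}(6) $y^*\le x^*$, i.e.\ $y^*\ra x^* = 1$; therefore
\[
\varphi_x y \;=\; y\wedge_Q x \;=\; ((y^*\ra x^*)\ra x^*)^* \;=\; (1\ra x^*)^* \;=\; x \;=\; (x\ra y^*)^*,
\]
the last equality being the definition of $x\le_L y$. If instead $x\le_L y^*$, then $x^* = x\ra y$, so by Lemma \ref{iol-30-20}(5) also $y^* = y\ra x$, i.e.\ $y\le_L x^*$. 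Lemma \ref{iol-30-20}(7) (applied with $y$ in the role of $x$ and $x^*$ in the role of $y$) then gives $y\wedge_Q x = 0$. On the other hand,
\[
x\ra y^* \;=\; y\ra x^* \;=\; y\ra (x\ra y) \;=\; x\ra (y\ra y) \;=\; 1,
\]
so $(x\ra y^*)^* = 0$, and $x\mathcal C y$ holds. The only subtle step is recognising that in a plain i-OL (not an i-OML) one must route through $\le_L$-antisymmetry and condition $(iG)$ rather than through $\wedge_Q$-absorption; after that, each subcase is a short computation.

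Finally, (3) is a direct consequence of (2) together with Lemma \ref{iol-30-20}(3). That lemma gives $x\le_L y\ra x$ and $x\le_L x^*\ra y$, whence (2) yields $x\mathcal C(y\ra x)$ and $x\mathcal C(x^*\ra y)$. For $y\mathcal C(x^*\ra y)$, substitute $y$ for $x$ and $x^*$ for $y$ in Lemma \ref{iol-30-20}(3) to obtain $y\le_L x^*\ra y$, and invoke (2) once more. The main potential obstacle, as indicated, is handling (2) correctly inside an i-OL without accidentally invoking $\le_L\subseteq\le_Q$, since that inclusion characterises i-OML rather than i-OL; this is avoided by working only with items (1), (5), (7) of Lemma \ref{iol-30-20} and Proposition \ref{qbe-20}(6).
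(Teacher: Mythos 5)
Your proof is correct and follows essentially the same route as the paper: direct computation of $\varphi_x y=((y^*\ra x^*)\ra x^*)^*$ for part (1), the two-case unwinding of $\le_L$ via $(iG)$, $(impl)$ and the basic BE identities for part (2), and deduction of part (3) from part (2) together with Lemma \ref{iol-30-20}(3). The only cosmetic difference is that in the second subcase of (2) you route through Lemma \ref{iol-30-20}(5),(7) where the paper substitutes $x^*=x\ra y$ directly; both are valid one-line computations in a general i-OL.
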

\begin{proof}
$(1)$ It follows from Proposition \ref{sioml-20}$(1)$. \\ 
$(2)$ Since $x\le_L y$ implies $x=(x\ra y^*)^*$ and $x\le y$, we have 
$\varphi_x y=y\wedge_Q x=((y^*\ra x^*)\ra x^*)^*=((x\ra y)\ra x^*)^*=(1\ra x^*)^*=x=(x\ra y^*)^*$, 
that is, $x\mathcal{C} y$. 
Similarly, from $x\le_L y^*$ we have $x^*=x\ra y$ and $x\le_L y^*$; hence, $x\ra y^*=1$. It follows that  
$\varphi_x y=y\wedge_Q x=((y^*\ra x^*)\ra x^*)^*=((x\ra y)\ra x^*)^*=(x^*\ra x^*)^*=0=(x\ra y^*)^*$; thus,   
$x\mathcal{C} y$. \\
$(3)$ It follows from $(2)$, since $x\le_L y\ra x$ and $x, y\le_L x^*\ra y$.  
\end{proof}

The next theorem provides a characterization of implicative-orthomodular lattices in terms of the commutativity 
between two elements. This characterization is used to prove several important results in this section.

\begin{theorem} \label{cioml-30} Let $X$ be an implicative-ortholattice. The following are equivalent: \\
$(a)$ $X$ is an implicative-orthomodular lattice; \\
$(b)$ for all $x, y\in X$, $x\mathcal{C} y$ implies $y\mathcal{C} x$. 
\end{theorem}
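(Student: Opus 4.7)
The plan is to route both implications through Theorem~\ref{dioml-05-50}, which characterizes implicative-orthomodular lattices among implicative-ortholattices by the inclusion $\le_L\,\subseteq\,\le_Q$.

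For $(b)\Rightarrow(a)$, I would fix $x,y\in X$ with $x\le_L y$. By Lemma~\ref{cioml-20}(2) this already yields $x\mathcal{C} y$, so the assumed symmetry gives $y\mathcal{C} x$, i.e.\ $\varphi_yx=(y\ra x^*)^*$. Now $(y\ra x^*)^*=(x\ra y^*)^*$ by Lemma~\ref{qbe-10}(3), and $x\le_L y$ says exactly $x=(x\ra y^*)^*$; therefore $x\wedge_Qy=\varphi_yx=x$, i.e.\ $x\le_Q y$. This proves $\le_L\,\subseteq\,\le_Q$, so $X$ is an implicative-orthomodular lattice by Theorem~\ref{dioml-05-50}.

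For $(a)\Rightarrow(b)$, I would assume $X$ is an i-OML and $x\mathcal{C} y$, that is $y\wedge_Q x=(x\ra y^*)^*$. The key move is to express $x\wedge_Q y$ as a composition of Sasaki projections, using Proposition~\ref{sioml-30}(6) with $a=y$, $b=x$: $\varphi_y\varphi_xy=\varphi_yx$, which reads
\[
x\wedge_Q y\;=\;(y\wedge_Qx)\wedge_Q y.
\]
By Lemma~\ref{iol-30-20}(4), $(x\ra y^*)^*\le_L y$, so the hypothesis forces $y\wedge_Qx\le_L y$. Since $X$ is an i-OML, Corollary~\ref{dioml-05-55} upgrades this to $y\wedge_Qx\le_Q y$, which means $(y\wedge_Qx)\wedge_Qy=y\wedge_Qx$. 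Combining, $x\wedge_Qy=y\wedge_Qx=(x\ra y^*)^*=(y\ra x^*)^*$ (using Lemma~\ref{qbe-10}(3) once more), which is precisely $y\mathcal{C} x$.

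The main obstacle is the forward direction: $\wedge_Q$ is genuinely non-commutative in a generic i-OML, so $x\wedge_Qy=y\wedge_Qx$ cannot be obtained by any symmetry of the operation itself. The crucial trick is the Sasaki composition identity $\varphi_a\varphi_ba=\varphi_ab$ from Proposition~\ref{sioml-30}(6), which reroutes the computation of $x\wedge_Qy$ through $y\wedge_Qx$, combined with the observation that the specific element $y\wedge_Qx=(x\ra y^*)^*$ happens to sit $\le_L$-below $y$; the i-OML identity $\le_L\,=\,\le_Q$ then makes it a fixed point of $\varphi_y$, and symmetry of $\mathcal{C}$ drops out.
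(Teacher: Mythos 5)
Your proposal is correct, and the two directions deserve separate comment. Your $(b)\Rightarrow(a)$ argument is essentially identical to the paper's: take $x\le_L y$, get $x\mathcal{C}y$ from Lemma~\ref{cioml-20}$(2)$, flip to $y\mathcal{C}x$, and read off $x\wedge_Q y=(y\ra x^*)^*=(x\ra y^*)^*=x$, so $\le_L\subseteq\le_Q$ and Theorem~\ref{dioml-05-50} applies. For $(a)\Rightarrow(b)$, however, you take a genuinely different route. The paper runs an order-theoretic squeeze: from Proposition~\ref{ioml-50}$(7)$ it gets $x\wedge_Q y\le (x\ra y^*)^*$, from $(IOM)$ it gets $(x\ra y^*)^*\le_L x\wedge_Q y$, and then the mixed antisymmetry statement Proposition~\ref{ioml-50}$(5)$ (if $u\le_L v$ and $u\le v$ then $u=v$) forces $x\wedge_Q y=(x\ra y^*)^*$. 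You instead exploit the Sasaki composition identity $\varphi_a\varphi_b a=\varphi_a b$ of Proposition~\ref{sioml-30}$(6)$, which rewrites $x\wedge_Q y$ as $(y\wedge_Q x)\wedge_Q y$, and then use the hypothesis $y\wedge_Q x=(x\ra y^*)^*\le_L y$ together with the collapse $\le_L=\le_Q$ (Corollary~\ref{dioml-05-55}) to see that $y\wedge_Q x$ is a fixed point of $\varphi_y$; the chain $x\wedge_Q y=(y\wedge_Q x)\wedge_Q y=y\wedge_Q x=(y\ra x^*)^*$ then gives $y\mathcal{C}x$. All the ingredients you cite (Proposition~\ref{sioml-30}, Corollary~\ref{dioml-05-55}, Lemma~\ref{iol-30-20}$(4)$) are established before and independently of this theorem, so there is no circularity. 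The paper's version is marginally more self-contained at this point of the text (it only needs Section~2 material plus $(IOM)$), while yours makes the role of the Sasaki projections conceptually transparent and in fact establishes the stronger conclusion $x\wedge_Q y=y\wedge_Q x$ directly, anticipating Corollary~\ref{cioml-40}.
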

\begin{proof}
$(a)\Rightarrow (b)$ Suppose $X$ is an i-OML and let $x,y\in X$ such that $x\mathcal{C} y$, that is, 
$\varphi_x y=y\wedge_Q x=(x\ra y^*)^*$. 
By Proposition \ref{ioml-50}$(7)$, $(x\wedge_Q y)\ra (y\wedge_Q x)=1$; hence, $(x\wedge_Q y)\ra (x\ra y^*)^*=1$;  
therefore $x\wedge_Q y\le (x\ra y^*)^*$. 
On the other hand, by $(IOM)$, $x\le_Q x^*\ra y^*$; thus, $(x^*\ra y^*)\ra y^*\le_Q x\ra y^*$. 
It follows that $(x\ra y^*)^*\le_L x\wedge_Q y$, and applying Proposition \ref{ioml-50}$(5)$, we get 
$x\wedge_Q y=(x\ra y^*)^*$. Hence, $\varphi_y x=(x\ra y^*)^*=(y\ra x^*)^*$, that is, $y\mathcal{C} x$. \\
$(b)\Rightarrow (a)$ Let $x,y\in X$ such that $x\le_L y$, that is, $(x\ra y^*)^*=x$. 
By Lemma \ref{cioml-20}$(2)$, we have $x\mathcal{C} y$, and by $(b)$, $y\mathcal{C} x$. 
Hence, $\varphi_y x=(y\ra x^*)^*=(x\ra y^*)^*=x$; thus, $x\wedge_Q y=x$, that is, $x\le_Q y$. 
Thus, $x\le_L y$ implies $x\le_Q y$, and by Theorem \ref{dioml-05-50}, $X$ is an i-OML. 
\end{proof}

\begin{corollary} \label{cioml-40} Let $X$ be an implicative-ortholattice. 
Then $X$ is an implicative-orthomodular lattice if and only if for all $x, y\in X$, $x\mathcal{C} y$ 
implies $x\wedge_Q y=y\wedge_Q x$. 
\end{corollary}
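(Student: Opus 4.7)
The plan is to leverage Theorem \ref{cioml-30}, which already characterizes implicative-orthomodular lattices via the implication ``$x\mathcal{C}y$ implies $y\mathcal{C}x$'', together with the basic identity $x\ra y^*=y\ra x^*$ from Lemma \ref{qbe-10}$(3)$, valid in any bounded BE algebra. Unfolding the definition, $x\mathcal{C}y$ means $\varphi_xy=y\wedge_Q x=(x\ra y^*)^*$, while $y\mathcal{C}x$ means $\varphi_yx=x\wedge_Q y=(y\ra x^*)^*$. Since $(x\ra y^*)^*=(y\ra x^*)^*$ by the identity above, the conditions ``$y\mathcal{C}x$'' and ``$x\wedge_Q y=y\wedge_Q x$'' are, under the assumption $x\mathcal{C}y$, interchangeable.

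For the forward implication, assume $X$ is an i-OML and $x\mathcal{C}y$. By Theorem \ref{cioml-30} we obtain $y\mathcal{C}x$, so $x\wedge_Q y=(y\ra x^*)^*$ and $y\wedge_Q x=(x\ra y^*)^*$; invoking Lemma \ref{qbe-10}$(3)$ gives $x\wedge_Q y=y\wedge_Q x$.

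For the converse, assume the stated property. Let $x,y\in X$ with $x\mathcal{C}y$, so $y\wedge_Q x=(x\ra y^*)^*$. By the hypothesis, $x\wedge_Q y=y\wedge_Q x=(x\ra y^*)^*$, and by Lemma \ref{qbe-10}$(3)$ this equals $(y\ra x^*)^*$. Therefore $\varphi_yx=(y\ra x^*)^*$, which is precisely $y\mathcal{C}x$. Having established that $x\mathcal{C}y$ implies $y\mathcal{C}x$ under this hypothesis, Theorem \ref{cioml-30} yields that $X$ is an i-OML.

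There is no real obstacle here: the corollary is essentially a rephrasing of Theorem \ref{cioml-30} once one notices that $x\ra y^*=y\ra x^*$ makes the defining equations of $x\mathcal{C}y$ and $y\mathcal{C}x$ symmetric with respect to the corresponding $\wedge_Q$ values. The only care needed is to apply Lemma \ref{qbe-10}$(3)$ correctly and to cite Theorem \ref{cioml-30} as the bridge in both directions.
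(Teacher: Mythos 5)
Your proposal is correct and follows essentially the same route as the paper's proof: both directions reduce the statement to Theorem \ref{cioml-30} by unfolding the definitions of $x\mathcal{C}y$ and $y\mathcal{C}x$ and using the identity $x\ra y^*=y\ra x^*$ to identify $(x\ra y^*)^*$ with $(y\ra x^*)^*$. No gaps; the argument matches the paper's.
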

\begin{proof}
Let $X$ be an i-OML and let $x,y\in X$ such that $x\mathcal{C} y$, that is, $y\wedge_Q x=(x\ra y^*)^*$. 
By Theorem \ref{cioml-30}, $x\mathcal{C} y$ implies $y\mathcal{C} x$; hence, 
$x\wedge_Q y=(y\ra x^*)^*=(x\ra y^*)^*=y\wedge_Q x$. 
Conversely, let $X$ be an i-OL and let $x,y\in X$ such that $x\mathcal{C} y$, that is,  
$y\wedge_Q x=(x\ra y^*)^*$. By hypothesis, $y\wedge_Q x=x\wedge_Q y$; thus, $x\wedge_Q y=(x\ra y^*)^*=(y\ra x^*)^*$. 
It follows that $\varphi_y x=(y\ra x^*)^*$; hence, $y\mathcal{C} x$, and by Theorem \ref{cioml-30}, $X$ is an i-OML. 
\end{proof}

\begin{lemma} \label{cioml-50} Let $X$ be an implicative-orthomodular lattice and let $x,y \in X$ such that  $x\mathcal{C} y$. Then $x\mathcal{C} y^*$, $x^*\mathcal{C} y$ and $x^*\mathcal{C} y^*$.  
\end{lemma}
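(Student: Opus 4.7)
The plan is to reduce all three conclusions to a single key implication: $x\mathcal{C} y \Rightarrow x\mathcal{C} y^*$. Once this is in hand, the symmetry from Theorem \ref{cioml-30} ($x\mathcal{C} y \Leftrightarrow y\mathcal{C} x$ in an i-OML) delivers the other two. Indeed, $x^*\mathcal{C} y$ follows from the chain $x\mathcal{C} y \Rightarrow y\mathcal{C} x \Rightarrow y\mathcal{C} x^* \Rightarrow x^*\mathcal{C} y$, where the middle step is the key implication applied with the roles of $x$ and $y$ exchanged, and the outer steps are Theorem \ref{cioml-30}; then $x^*\mathcal{C} y^*$ is obtained by applying the key implication once more to $x^*\mathcal{C} y$.

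For the key implication, I would unwind the definitions. By involutivity, $x\mathcal{C} y^*$ is equivalent to $y^*\wedge_Q x = (x\ra y)^*$, which in turn, by the definition of $\wedge_Q$, amounts to showing
\[
(y\ra x^*)\ra x^* \;=\; x\ra y.
\]
From the hypothesis $x\mathcal{C} y$ and Theorem \ref{cioml-30} I obtain $y\mathcal{C} x$, that is, $x\wedge_Q y = (y\ra x^*)^*$, and hence $y\ra x^* = (x\wedge_Q y)^*$. Substituting this into the left-hand side and using Lemma \ref{qbe-10}(6) in the form $a^*\ra b^* = b\ra a$, the left-hand side becomes $x\ra (x\wedge_Q y)$. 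The desired equality is then the i-OML identity $x\ra (x\wedge_Q y)=x\ra y$ from Proposition \ref{ioml-50-10}(5), which closes the argument.

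The computation itself is very short, so the main obstacle is organizational rather than technical: one must recognise that a single implication, combined with the symmetry $x\mathcal{C} y \Leftrightarrow y\mathcal{C} x$, covers all three cases in a uniform way. The one point worth flagging is that the essential use of the orthomodular hypothesis enters through Proposition \ref{ioml-50-10}(5); this identity fails in a general implicative-ortholattice, as shown by Example \ref{dioml-60} (where $b\ra(b\wedge_Q c)=d\ne c=b\ra c$), so the i-OML assumption is genuinely needed in the proof.
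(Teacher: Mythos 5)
Your proof is correct and takes essentially the same route as the paper: both establish the key implication $x\mathcal{C} y\Rightarrow x\mathcal{C} y^*$ by reducing $(y\ra x^*)\ra x^*$ to $x\ra(\cdot\wedge_Q\cdot)$ and invoking the i-OML absorption identity (the paper uses Proposition \ref{ioml-50}$(1)$ directly from $x\mathcal{C}y$, you use Proposition \ref{ioml-50-10}$(5)$ after first passing to $y\mathcal{C}x$ via Theorem \ref{cioml-30} — an immaterial difference), and then both propagate to the remaining two claims using the symmetry of $\mathcal{C}$ from Theorem \ref{cioml-30}.
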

\begin{proof}
Assume that $x\mathcal{C} y$, that is, $\varphi_x y=y\wedge_Q x=(x\ra y^*)^*$. Using Proposition \ref{ioml-50}$(1)$, 
we have: \\
$\hspace*{2.10cm}$ $\varphi_x y^*=y^*\wedge_Q x=((y\ra x^*)\ra x^*)^*=((x\ra y^*)\ra x^*)^*$ \\
$\hspace*{3.00cm}$ $=((y\wedge_Q x)^*\ra x^*)^*=(x\ra (y\wedge_Q x))^*$ \\
$\hspace*{3.00cm}$ $=(x\ra y)^*=(x\ra (y^*)^*)^*$. \\
Hence, $x\mathcal{C} y^*$. By Theorem \ref{cioml-30}, we get $y^*\mathcal{C} x$; thus, $y^*\mathcal{C} x^*$. 
Applying again Theorem \ref{cioml-30}, we have $x^*\mathcal{C} y^*$, and finally $x^*\mathcal{C} y$. 
\end{proof}

\begin{proposition} \label{cioml-60} Let $X$ be an implicative-orthomodular lattice and let $x,y\in X$. 
Then $x\mathcal{C} y$ if and only if $(x\ra y^*)\ra (x\ra y)^*=x$. 
\end{proposition}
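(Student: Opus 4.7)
The plan is to translate both conditions into a common notation. Set $w:=x\ra y$ and $v:=(x\ra y^*)^*$. From the definition of $\wedge_Q$ together with Lemma \ref{qbe-10}(6), we have $\varphi_x y=y\wedge_Q x=((x\ra y)\ra x^*)^*=(w\ra x^*)^*$, so the commutativity $x\mathcal{C}y$, which asserts $\varphi_x y=v$, is equivalent to
\[
\text{(A)}\qquad w\ra x^*=v^*,
\]
while the identity to be proved reads
\[
\text{(B)}\qquad v^*\ra w^*=x.
\]
It therefore suffices to prove (A)$\Longleftrightarrow$(B).

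For (A)$\Rightarrow$(B), substitute (A) into the left-hand side of (B) and invoke the contrapositive identity $w\ra x^*=x\ra w^*$ from Lemma \ref{qbe-10}(3):
\[
v^*\ra w^*=(w\ra x^*)\ra w^*=(x\ra w^*)\ra w^*=x\vee_Q w^*.
\]
Since $w=x\ra y$, condition $(IOM^{\prime\prime})$ from Lemma \ref{ioml-30} yields $x\vee_Q(x\ra y)^*=x$, which is (B).

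For (B)$\Rightarrow$(A), compose with $\ra w^*$ on the right of (B) to obtain
\[
x\ra w^*=(v^*\ra w^*)\ra w^*=v^*\vee_Q w^*=(v\wedge_Q w)^*,
\]
the last equality being the defining De Morgan relation $x^*\vee_Q y^*=(x\wedge_Q y)^*$. The decisive observation is that $v\le_L w$: Lemma \ref{iol-30-20}(4) gives $v\le_L y$, Lemma \ref{iol-30-20}(3) gives $y\le_L x\ra y=w$, and transitivity of $\le_L$ (Proposition \ref{qbe-20}(7)) yields $v\le_L w$. Because $X$ is an i-OML, Corollary \ref{dioml-05-55} gives $\le_L=\le_Q$, hence $v\wedge_Q w=v$. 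Combining with the previous display and Lemma \ref{qbe-10}(3) once more, we conclude $v^*=w\ra x^*$, which is (A).

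The principal obstacle is the reverse direction: although the chain $v\le_L y\le_L w$ holds in any i-OL, the crucial collapse $v\wedge_Q w=v$ requires promoting $\le_L$ to $\le_Q$, and this is exactly where the i-OML hypothesis is indispensable. The forward direction depends symmetrically on $(IOM^{\prime\prime})$, so that each implication activates a characteristic feature of the orthomodular setting.
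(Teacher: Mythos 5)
Your proof is correct. The forward direction is essentially the paper's: the paper invokes Proposition \ref{ioml-50}(4), i.e. $(x\ra y)\ra(y\wedge_Q x)=x$, while you unfold the same element as $x\vee_Q(x\ra y)^*$ and apply $(IOM^{\prime\prime})$ directly; as the proof of Theorem \ref{dioml-05-60}$(a)\Leftrightarrow(b)$ shows, these are literally the same computation organized differently. The converse is where you genuinely diverge. The paper computes $x\wedge_Q y=(y\ra x^*)^*$ from the hypothesis, concludes $y\mathcal{C}x$, and then appeals to the symmetry theorem (Theorem \ref{cioml-30}) to recover $x\mathcal{C}y$. You instead establish $x\mathcal{C}y$ directly: from (B) you get $x\ra w^*=v^*\vee_Q w^*=(v\wedge_Q w)^*$, and then the order-theoretic chain $v\le_L y\le_L w$ (Lemma \ref{iol-30-20}(3),(4) plus transitivity) together with the collapse $\le_L=\le_Q$ in an i-OML (Corollary \ref{dioml-05-55}) forces $v\wedge_Q w=v$, whence $w\ra x^*=v^*$. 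This is a clean trade: the paper's route leans on the already-proved symmetry of $\mathcal{C}$ and on Proposition \ref{ioml-50}(1), whereas yours replaces both by a single use of the $\le_L\subseteq\le_Q$ characterization of orthomodularity, and it makes visible exactly where the i-OML hypothesis enters in each direction. Both arguments are sound and of comparable length.
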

\begin{proof}
If $x\mathcal{C} y$, then $\varphi_x y=y\wedge_Q x=(x\ra y^*)^*$. Using Proposition \ref{ioml-50}$(4)$, we have 
$(x\ra y^*)\ra (x\ra y)^*=(y\wedge_Q x)^*\ra (x\ra y)^*=(x\ra y)\ra (y\wedge_Q x)=x$. 
Conversely, suppose $(x\ra y^*)\ra (x\ra y)^*=x$. Then we have: \\
$\hspace*{2.00cm}$ $x\wedge_Q y=((x^*\ra y^*)\ra y^*)^*=((y\ra x)\ra y^*)^*$ \\
$\hspace*{3.00cm}$ $=((y\ra ((x\ra y^*)\ra (x\ra y)^*))\ra y^*)^*$ \\
$\hspace*{3.00cm}$ $=(((x\ra y^*)\ra (y\ra (x\ra y)^*))\ra y^*)^*$ \\
$\hspace*{3.00cm}$ $=(((x\ra y^*)\ra ((x\ra y)\ra y^*))\ra y^*)^*$ \\
$\hspace*{3.00cm}$ $=(((x\ra y^*)\ra ((y^*\ra x^*)\ra y^*))\ra y^*)^*$ \\
$\hspace*{3.00cm}$ $=(((x\ra y^*)\ra y^*)\ra y^*)^*$ (by $(impl)$) \\
$\hspace*{3.00cm}$ $=((x\vee_Q y^*)\ra y^*)^*=(y\ra (x^*\wedge_Q y))^*$ \\
$\hspace*{3.00cm}$ $=(y\ra x^*)^*$ (by Prop. \ref{ioml-50}$(1)$). \\
Hence, $\varphi_y x=(y\ra x^*)^*$, that is, $y\mathcal{C} x$, and by Theorem \ref{cioml-30}, $x\mathcal{C} y$. 
\end{proof}

Recall that in \cite{Ciu84} the commutativity relation between two elements of an implicative-orthomodular 
lattice was defined based on the formula given in Proposition \ref{cioml-60}. 
This formula is useful for an easier proof of some results in the paper. 

\begin{remark} \label{cioml-60-05} Let $X$ be an implicative-orthomodular lattice and let $x,y \in X$. 
Then $x\perp y$ implies $x\mathcal{C} y$. 
Indeed, if $x\perp y$, then $y\perp x$; hence, $x^*=x\ra y$ and $y^*=y\ra x$. 
It follows that $(x\ra y^*)\ra (x\ra y)^*=(x\ra (y\ra x))\ra (x^*)^*=1\ra x=x$, and according to 
Proposition \ref{cioml-60} we get $x\mathcal{C} y$. 
As we will see in Example \ref{cioml-60-70}, the converse is not true. 
\end{remark}

\begin{proposition} \label{cioml-60-10} Let $X$ be an implicative-orthomodular lattice and let $x,y \in X$. 
Then $x\mathcal{C} y$ if and only if $x\wedge_Q y=(x\ra y^*)^*$. 
\end{proposition}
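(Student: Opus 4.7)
The plan is to reduce both directions to results already established, exploiting the fact that the only real difference between the assertion $x\wedge_Q y=(x\ra y^*)^*$ and the definition $\varphi_x y=y\wedge_Q x=(x\ra y^*)^*$ of $x\mathcal{C}y$ is the swap of arguments in $\wedge_Q$. The key ingredients I expect to use are Corollary \ref{cioml-40} (in an i-OML, $x\mathcal{C}y$ implies $x\wedge_Q y=y\wedge_Q x$), Theorem \ref{cioml-30} (the symmetry of $\mathcal{C}$ in an i-OML), and the basic BE-algebra identity $x\ra y^*=y\ra x^*$ from Lemma \ref{qbe-10}(3).

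For the forward direction I would assume $x\mathcal{C} y$, so that by definition $y\wedge_Q x=(x\ra y^*)^*$, and then simply apply Corollary \ref{cioml-40} to replace $y\wedge_Q x$ by $x\wedge_Q y$; this yields $x\wedge_Q y=(x\ra y^*)^*$ at once.

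For the reverse direction I would start from $x\wedge_Q y=(x\ra y^*)^*$ and rewrite the right-hand side as $(y\ra x^*)^*$ using Lemma \ref{qbe-10}(3). This gives $\varphi_y x=x\wedge_Q y=(y\ra x^*)^*$, which is exactly the statement $y\mathcal{C} x$. An appeal to Theorem \ref{cioml-30} then transports this to $x\mathcal{C} y$.

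Since every piece of the argument is a direct invocation of a previous result, I do not anticipate any genuine obstacle; the only thing to watch is keeping the two orderings of $\wedge_Q$ straight and remembering that the identity $x\ra y^*=y\ra x^*$ (and hence the symmetry $(x\ra y^*)^*=(y\ra x^*)^*$) is available already in any involutive BE algebra, so in particular in $X$.
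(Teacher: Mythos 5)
Your proof is correct. The forward direction is essentially the paper's own argument: the paper applies Theorem \ref{cioml-30} to get $y\mathcal{C}x$ and reads off $x\wedge_Q y=\varphi_y x=(y\ra x^*)^*=(x\ra y^*)^*$, while you invoke Corollary \ref{cioml-40} (which is itself a direct consequence of Theorem \ref{cioml-30}) to swap the arguments of $\wedge_Q$; these are the same computation packaged differently. Your reverse direction, however, is genuinely shorter than the paper's. The paper takes the hypothesis $x\wedge_Q y=(x\ra y^*)^*$, verifies by a chain of identities (using Proposition \ref{ioml-50}$(4)$) that $(y\ra x^*)\ra (y\ra x)^*=y$, and only then concludes $y\mathcal{C}x$ via the characterization in Proposition \ref{cioml-60}. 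You instead observe that, after rewriting $(x\ra y^*)^*=(y\ra x^*)^*$ via Lemma \ref{qbe-10}$(3)$, the hypothesis reads $\varphi_y x=(y\ra x^*)^*$, which is verbatim the definition of $y\mathcal{C}x$; Theorem \ref{cioml-30} then finishes. This is a legitimate shortcut --- it makes transparent that the proposition is really just Theorem \ref{cioml-30} restated through the definition of $\mathcal{C}$ --- whereas the paper's detour through Proposition \ref{cioml-60} buys nothing here beyond exercising that characterization. Both arguments correctly use the i-OML hypothesis only where it is needed, namely in the appeal to the symmetry of $\mathcal{C}$.
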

\begin{proof}
Suppose $x\mathcal{C} y$. Then, by Theorem \ref{cioml-30}, $y\mathcal{C} x$. 
Hence, $(x\ra y^*)^*=(y\ra x^*)^*=\varphi_y x=x\wedge_Q y$.  
Conversely, if $x\wedge_Q y=(x\ra y^*)^*$, using Proposition \ref{ioml-50}$(4)$, we get: \\
$\hspace*{2.00cm}$ $(y\ra x^*)\ra (y\ra x)^*=(y\ra x)\ra (x\ra y^*)^*=(y\ra x)\ra (x\wedge_Q y)=y$. \\
By Proposition \ref{cioml-60}, it follows that $y\mathcal{C} x$. Hence, by Theorem \ref{cioml-30}, $x\mathcal{C} y$. 
\end{proof}

\begin{corollary} \label{cioml-60-20} Let $X$ be an implicative-orthomodular lattice and let $x,y \in X$. 
The following are equivalent: \\ 
$(a)$ $x\mathcal{C} y;$ \\ 
$(b)$ $x\wedge_Q y=y\wedge_Q x;$ \\
$(c)$ $x\vee_Q y=y\vee_Q x;$ \\
$(d)$ $\varphi_x y=\varphi_y x$.  
\end{corollary}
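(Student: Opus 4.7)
My plan is to establish the equivalences along the route (d) $\Leftrightarrow$ (b), (b) $\Leftrightarrow$ (a), (b) $\Leftrightarrow$ (c). The first is purely definitional: since $\varphi_a u = u \wedge_Q a$ by Definition \ref{sioml-10}, statement (d) reads $y \wedge_Q x = x \wedge_Q y$, which is exactly (b). The implication (a) $\Rightarrow$ (b) is then the forward direction of Corollary \ref{cioml-40} applied to the i-OML $X$.

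For (b) $\Rightarrow$ (a), Proposition \ref{cioml-60-10} reduces the task to verifying $x \wedge_Q y = (x \ra y^*)^*$, and I would carry this out by sandwiching both sides in the $\le_L$-order. From Lemma \ref{iol-30-20}(4) the element $(x \ra y^*)^*$ lies $\le_L$-below both $x$ and $y$, and Proposition \ref{ioml-50-10}(1) then forces $(x \ra y^*)^* \le_L x \wedge_Q y$. For the reverse, Corollary \ref{dioml-05-55} collapses $\le_L$ and $\le_Q$ in an i-OML, so $x \wedge_Q y \le_L y$ is automatic, while applying hypothesis (b) together with $y \wedge_Q x \le_Q x$ (a consequence of the idempotence of Sasaki projections, Proposition \ref{sioml-30}(2)) yields $x \wedge_Q y \le_L x$. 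Since by Remark \ref{qbe-10-10} the element $(x \ra y^*)^*$ is the meet in the lattice order $\le_L$, we conclude $x \wedge_Q y \le_L (x \ra y^*)^*$; antisymmetry of $\le_L$ (Proposition \ref{qbe-20}(7)) then finishes the argument.

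Finally, for (b) $\Leftrightarrow$ (c), the De Morgan identity $x \vee_Q y = (x^* \wedge_Q y^*)^*$ from Proposition \ref{qbe-20}(3) rewrites (c) as $x^* \wedge_Q y^* = y^* \wedge_Q x^*$, which is statement (b) applied to the pair $x^*, y^*$. Combining the already-established equivalence (a) $\Leftrightarrow$ (b) for $x^*, y^*$ with Lemma \ref{cioml-50}, which gives $x \mathcal{C} y \Leftrightarrow x^* \mathcal{C} y^*$, closes the circle. The only delicate step is the sandwich argument for (b) $\Rightarrow$ (a); the remaining equivalences amount to bookkeeping with duality and the unfolding of definitions.
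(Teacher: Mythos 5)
Your proof is correct, and its skeleton matches the paper's: Proposition \ref{cioml-60-10} is the bridge between $\mathcal{C}$ and the identity $x\wedge_Q y=(x\ra y^*)^*$, the equivalence with $(c)$ goes through the De Morgan identity $x\vee_Q y=(x^*\wedge_Q y^*)^*$ together with Lemma \ref{cioml-50}, and $(b)\Leftrightarrow(d)$ is read off from Definition \ref{sioml-10}. The one place where you genuinely diverge is the direction $(b)\Rightarrow(a)$. The paper disposes of $(a)\Leftrightarrow(b)$ in one line by combining Theorem \ref{cioml-30} with Proposition \ref{cioml-60-10} ``since $(x\ra y^*)^*=(y\ra x^*)^*$,'' leaving the reader to see why commutativity of $\wedge_Q$ alone forces $x\wedge_Q y$ to equal $(x\ra y^*)^*$; you instead make this explicit with an order-theoretic sandwich, squeezing $x\wedge_Q y$ between $(x\ra y^*)^*\le_L x\wedge_Q y$ (Lemma \ref{iol-30-20}$(4)$ plus Proposition \ref{ioml-50-10}$(1)$) and $x\wedge_Q y\le_L(x\ra y^*)^*$ (both $\le_L$-bounds plus the fact that $(x\ra y^*)^*$ is the meet for $\le_L$), then invoking antisymmetry. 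This is a legitimate and in fact more self-contained argument than the paper's. One small citation quibble: for $x\wedge_Q y\le_L y$ the cleanest references are Proposition \ref{ioml-50}$(6)$ or the same idempotence fact $\varphi_y\varphi_y x=\varphi_y x$ you already use for the other bound; appealing to Corollary \ref{dioml-05-55} alone does not by itself produce the inequality, though the fact is true and available in the paper.
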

\begin{proof}
$(a)\Leftrightarrow (b)$ By Theorem \ref{cioml-30}, $x\mathcal{C} y$ iff $y\mathcal{C} x$.  
By Proposition \ref{cioml-60-10}, $x\mathcal{C} y$ iff $x\wedge_Q y=y\wedge_Q x$, since $(x\ra y^*)^*=(y\ra x^*)^*$. \\
$(a)\Leftrightarrow (c)$ By Lemma \ref{cioml-50} and taking into consideration that $(a)\Leftrightarrow (b)$,  we have $x\mathcal{C} y$ iff $x^*\mathcal{C} y^*$ iff $x^*\wedge_Q y^*=y^*\wedge_Q x^*$ iff $(x^*\wedge_Q y^*)^*=(y^*\wedge_Q x^*)^*$ iff $x\vee_Q y=y\vee_Q x$. \\ 
$(b)\Leftrightarrow (d)$ It is obvious, since $\varphi_x y=\varphi_y x$ iff $x\wedge_Q y=y\wedge_Q x$. 
\end{proof} 

In what follows, inspired by \cite[Thm. 5.2]{Sasaki}, we give a characterization of the commutativity relation 
between two elements by means of Sasaki projections. 

\begin{theorem} \label{cioml-60-40} Let $X$ be an implicative-orthomodular lattice and let $a,b \in X$. 
The following are equivalent: \\
$(a)$ $a\mathcal{C} b;$ \\ 
$(b)$ $\varphi_a\varphi_b x=\varphi_b\varphi_a x=\varphi_{a\wedge_Q b}x$ for all $x\in X;$ \\
$(c)$ for all $x\in X$, $x\le_L a$ implies $\varphi_b x\le_L a$ and $x\le_L b$ implies $\varphi_a x\le_L b$.   
\end{theorem}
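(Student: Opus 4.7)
The plan is to establish $(a)\Leftrightarrow(b)$ and $(a)\Leftrightarrow(c)$, leveraging Corollary \ref{cioml-60-20} to convert between $a\mathcal{C} b$ and the symmetry $a\wedge_Q b=b\wedge_Q a$, together with the iterated-meet identities from Propositions \ref{ioml-50} and \ref{ioml-50-10}. The single background fact I will rely on throughout is that every Sasaki projection satisfies $\varphi_y x\le_L y$, since $x\wedge_Q y=((x^*\ra y^*)\ra y^*)^*$ fits Lemma \ref{iol-30-20}(4); thus $b\wedge_Q a\le_L a$ and $a\wedge_Q b\le_L b$ hold unconditionally, whereas the reverse inequalities are precisely what commutativity supplies.

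For $(a)\Rightarrow(b)$, I would start from $a\mathcal{C} b$; by Corollary \ref{cioml-60-20} this gives $a\wedge_Q b=b\wedge_Q a$, and in particular $a\wedge_Q b\le_L a$ as well as $a\wedge_Q b\le_L b$. For any $x\in X$, Proposition \ref{ioml-50}(10) rewrites
\[
\varphi_a\varphi_b x=(x\wedge_Q b)\wedge_Q a=(x\wedge_Q b)\wedge_Q(b\wedge_Q a),
\]
and Proposition \ref{ioml-50-10}(2), applied with $p=a\wedge_Q b\le_L b$, collapses the right-hand side to $x\wedge_Q p=\varphi_{a\wedge_Q b}x$; a mirror computation using $p\le_L a$ handles $\varphi_b\varphi_a x$. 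For $(b)\Rightarrow(a)$, I simply evaluate at $x=1$: since $\varphi_a 1=a$ and $\varphi_b 1=b$ by Proposition \ref{sioml-20}(1), $(b)$ degenerates to $b\wedge_Q a=a\wedge_Q b$, which by Corollary \ref{cioml-60-20} means $a\mathcal{C} b$.

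For $(a)\Rightarrow(c)$, with $a\wedge_Q b\le_L a$ in hand and $x\le_L a$ equivalent to $x=x\wedge_Q a$ (Corollary \ref{dioml-05-55}), Proposition \ref{ioml-50}(10) rewrites $\varphi_b x=(x\wedge_Q a)\wedge_Q b$ as $(x\wedge_Q a)\wedge_Q(a\wedge_Q b)$, whose right Sasaki factor is $\le_L a\wedge_Q b\le_L a$; the second clause of $(c)$ is then symmetric. For $(c)\Rightarrow(a)$, I specialise the second clause of $(c)$ to $x=a\wedge_Q b$, which satisfies $x\le_L b$ unconditionally: Proposition \ref{sioml-30}(6) gives $\varphi_a(a\wedge_Q b)=\varphi_a\varphi_b a=\varphi_a b=b\wedge_Q a$, so $(c)$ delivers $b\wedge_Q a\le_L b$; combined with the automatic $b\wedge_Q a\le_L a$, Proposition \ref{ioml-50-10}(1) forces $b\wedge_Q a\le_L a\wedge_Q b$. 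Swapping the roles of $a$ and $b$ (i.e.\ applying the first clause of $(c)$ to $x=b\wedge_Q a\le_L a$) yields the reverse inequality, so antisymmetry of $\le_L$ and Corollary \ref{cioml-60-20} finish the argument.

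The main obstacle I anticipate is bookkeeping: keeping straight which $\le_L$-inequalities are automatic in any i-OML (such as $a\wedge_Q b\le_L b$) and which genuinely encode commutativity (such as $a\wedge_Q b\le_L a$), and choosing the correct instance of Proposition \ref{ioml-50}(10) versus Proposition \ref{ioml-50-10}(2) to collapse iterated Sasaki projections into a single one. Once the right substitutions are identified, each direction reduces to a one- or two-line calculation.
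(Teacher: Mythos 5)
Your proof is correct and follows essentially the same route as the paper: both hinge on Corollary \ref{cioml-60-20} to translate $a\mathcal{C} b$ into $a\wedge_Q b=b\wedge_Q a$, use Proposition \ref{ioml-50}$(10)$ together with the collapsing rule (your Proposition \ref{ioml-50-10}$(2)$, the paper's equivalent Proposition \ref{sioml-40}$(3)$) to reduce iterated projections, and use Proposition \ref{ioml-50-10}$(1)$ with antisymmetry of $\le_L$ to recover commutativity from $(c)$. The only differences are organizational (two equivalences instead of the cycle $(a)\Rightarrow(b)\Rightarrow(c)\Rightarrow(a)$, and specializing $(c)$ at $x=a\wedge_Q b$ rather than at $x=a$ and $x=b$), which do not change the substance.
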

\begin{proof}
$(a)\Rightarrow (b)$ Let $a,b\in X$ such that $a\mathcal{C} b$; hence, $a\wedge_Q b=b\wedge_Q a$. 
Let $x\in X$. By Proposition \ref{ioml-50}$(10)$, $(x\wedge_Q b)\wedge_Q a=(x\wedge_Q b)\wedge_Q (b\wedge_Q a)$; hence,  
$\varphi_a\varphi_b x=\varphi_{b\wedge_Q a}\varphi_b x=\varphi_{a\wedge_Q b}\varphi_b x$. 
Since $a\wedge_Q b\le_L b$, applying Proposition \ref{sioml-40}$(3)$, $\varphi_{a\wedge_Q b}\varphi_b x=\varphi_{a\wedge_Q b}x$. 
Hence, $\varphi_a\varphi_b x=\varphi_{a\wedge_Q b}x$, and similarly $\varphi_b\varphi_a x=\varphi_{a\wedge_Q b}x$. \\
$(b)\Rightarrow (c)$ Taking $x:=1$ in $(b)$, we get $a\wedge_Q b=b\wedge_Q a$. 
Let $x\in X$ such that $x\le_L a$; hence, $x\le_Q a$. 
It follows that $\varphi_a x=x\wedge_Q a=x$, and by $(b)$, we have: 
$\varphi_b x=\varphi_b\varphi_a x=\varphi_{a\wedge_Q b}x=\varphi_{b\wedge_Q a}x=x\wedge_Q (b\wedge_Q a)\le_L b\wedge_Q a\le_L a$. 
Similarly, $x\le_L b$ implies $x=\varphi_b x$; thus,  
$\varphi_a x=\varphi_a\varphi_b x=\varphi_{a\wedge_Q b}x=x\wedge_Q (a\wedge_Q b)\le_L a\wedge_Q b\le_L b$. \\
$(c)\Rightarrow (a)$ For $x:=a$ in the first implication of $(c)$, we get $\varphi_b a\le_L a$; 
hence, $a\wedge_Q b\le_L a$. 
Since $a\wedge_Q b\le_L b$, applying Proposition \ref{ioml-50-10}$(1)$ we have $a\wedge_Q b\le_L b\wedge_Q a$. 
Similarly, taking $x:=b$ in the second implication, it follows that $b\wedge_Q a\le_L b$. Since $b\wedge_Q a\le_L a$, by 
Proposition \ref{ioml-50-10}$(1)$ we get $b\wedge_Q a\le_L a\wedge_Q b$. Hence, $a\wedge_Q b=b\wedge_Q a$; thus,  
by Corollary \ref{cioml-60-20}, $a\mathcal{C} b$. 
\end{proof}

\begin{corollary} \label{cioml-60-50} The composition of two Sasaki projections $\varphi_a$, $\varphi_b$ on an 
implicative-orthomodular lattice is commutative if and only if $a\mathcal{C} b$.    
\end{corollary}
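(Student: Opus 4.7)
The plan is to derive this as a direct corollary of Theorem \ref{cioml-60-40}, using the equivalence $(a)\Leftrightarrow (b)$ for the forward direction and extracting just enough information from the identity $\varphi_a\varphi_b=\varphi_b\varphi_a$ to recover $a\mathcal{C} b$ for the converse.

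For the forward direction, assume $a\mathcal{C} b$. Then Theorem \ref{cioml-60-40}$(a)\Rightarrow (b)$ gives $\varphi_a\varphi_b x=\varphi_b\varphi_a x=\varphi_{a\wedge_Q b}x$ for every $x\in X$, so in particular $\varphi_a\circ\varphi_b=\varphi_b\circ\varphi_a$. This direction is essentially tautological given the theorem.

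For the converse, assume $\varphi_a\varphi_b=\varphi_b\varphi_a$. The natural idea is to specialize the identity to a well-chosen element so as to collapse the double composition to a single Sasaki projection. Evaluating at $x:=a$ and using $\varphi_a a=a$ from Proposition \ref{sioml-20}$(1)$ together with Proposition \ref{sioml-30}$(6)$, which states $\varphi_a\varphi_b a=\varphi_a b$, I obtain
\[
b\wedge_Q a=\varphi_a b=\varphi_a\varphi_b a=\varphi_b\varphi_a a=\varphi_b a=a\wedge_Q b.
\]
Hence $a\wedge_Q b=b\wedge_Q a$, and Corollary \ref{cioml-60-20}$(a)\Leftrightarrow (b)$ yields $a\mathcal{C} b$.

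I do not expect any serious obstacle here: the work was already done in Theorem \ref{cioml-60-40} and in the identity $\varphi_a\varphi_b a=\varphi_a b$ from Proposition \ref{sioml-30}$(6)$. The only subtle point is that commutativity of the compositions is prima facie weaker than the stronger conclusion $\varphi_a\varphi_b=\varphi_{a\wedge_Q b}$ appearing in Theorem \ref{cioml-60-40}$(b)$; the evaluation at $x=a$ bridges this gap by reducing the commutativity of operators to the commutativity of the meets $a\wedge_Q b$ and $b\wedge_Q a$, which is exactly what Corollary \ref{cioml-60-20} requires.
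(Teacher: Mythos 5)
Your proof is correct and follows essentially the route the paper intends: the forward direction is immediate from Theorem \ref{cioml-60-40}$(a)\Rightarrow(b)$, and the converse reduces operator commutativity to $a\wedge_Q b=b\wedge_Q a$ by evaluating at a single point and invoking Corollary \ref{cioml-60-20}. A marginally simpler evaluation point for the converse is $x:=1$, since $\varphi_b 1=b$ and $\varphi_a 1=a$ give $\varphi_a b=\varphi_b a$ directly without needing Proposition \ref{sioml-30}$(6)$, but your choice $x:=a$ works just as well.
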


\begin{example} \label{cioml-60-60}
Consider the i-OL $(X,\ra,^*,1)$ from Example \ref{dioml-60}, where $X=\{0,a,b,c,d,1\}$. 
We have shown that $X$ is not an i-OML. \\
$(1)$ We have $\varphi_a b\perp c$, but $b\not\perp \varphi_a c$. It follows that in Theorem \ref{sioml-40-10} 
the condition for $X$ to be an i-OML is necessary. \\
$(2)$ Since $\varphi_a b=b\wedge_Q a=a=(a\ra b^*)^*$, it follows that $a\mathcal{C} b$. 
However, $\varphi_b a=a\wedge_Q b=b\neq a=(b\ra a^*)^*$, therefore $b$ does not commute with $a$. 
This confirms the results of Theorem \ref{cioml-30} and Corollary \ref{cioml-60-20}. \\
$(3)$ We also have: $\varphi_a\varphi_ba=a$, $\varphi_b\varphi_aa=b$, $\varphi_{a\wedge_Q b}a=b$. 
Hence, the Sasaki projections $\varphi_a$ and $\varphi_b$ do not commute, even though $a\mathcal{C} b$. 
This means that the condition on $X$ in Theorem \ref{cioml-60-40} to be an i-OML is essential. 
\end{example}

\begin{example} \label{cioml-60-70}
Let $(X,\ra,^*,1)$ be the i-OML given in Example \ref{dioml-60-10}, where \\
$X=\{0,a,b,c,d,e,f,g,h,1\}$. \\
$(1)$ We have $a\mathcal{C} f$, but $a\not\perp f$; hence the converse of remark \ref{cioml-60-05} is not true. \\
$(2)$ Since $a\wedge_Q e=e\wedge_Q a (=e)$, it follows that $a\mathcal{C} e$. Using the table   
\[
\begin{array}{c|cccccccccc}
x                                   & 0 & a & b & c & d & e & f & g & h & 1 \\ \hline
\varphi_a\varphi_ex=(x\wedge_Q e)\wedge_Q a & 0 & e & 0 & e & e & e & 0 & 0 & e & e \\ \hline
\varphi_e\varphi_ax=(x\wedge_Q a)\wedge_Q e & 0 & e & 0 & e & e & e & 0 & 0 & e & e \\ \hline
\varphi_{a\wedge_Q e}x=x\wedge_Q (a\wedge_Q e)  & 0 & e & 0 & e & e & e & 0 & 0 & e & e \\ \hline
\end{array}
,
\]

we can see that $\varphi_a\varphi_ex=\varphi_e\varphi_ax=\varphi_{a\wedge_Q e}x$ for all $x\in X$, 
that is, $\varphi_a\varphi_e=\varphi_e\varphi_a$. This is consistent with Theorem \ref{cioml-60-40}. 
\end{example}

$\vspace*{1mm}$

\section{Implicative-Boolean center} 

In this section, based on the divisibility relation on implicative-ortholattices, we provide certain characterizations of implicative-Boolean algebras. We also introduce the notion of the center $\mathcal{CC}(X)$ of an implicative-orthomodular lattice $X$, proving that $\mathcal{CC}(X)$ is an implicative-Boolean subalgebra of $X$. 
Moreover, we prove that an implicative-ortholattice $X$ is an implicative-orthomodular lattice 
if and only if every two orthogonal elements of $X$ are contained in an implicative-Boolean subalgebra of $X$. \\
From the viewpoint of quantum logic, an implicative-orthomodular lattice represents the lattice of propositions 
of a quantum system, while its center corresponds to propositions that commute with the entire structure. 
In other words, the implicative-Boolean center captures the maximal ``classical core" of a 
quantum structure represented by an implicative-orthomodular lattice; thus, the implicative-Boolean center provides 
a bridge between classical and quantum aspects of the structure. 
By Definition \ref{dioml-100}, an implicative-Boolean algebra is an implicative-ortholattice satisfying 
condition $(@)$. 

\begin{definition} \label{cioml-70}
\emph{
Given an implicative-ortholattice $X$, we define the \emph{divisibility relation} $\mathcal{D}$ on $X$ by: 
for all $x, y\in X$, $x\mathcal{D} y$ if and only if $x$ and $y$ satisfy condition $(Idiv)$.  
}
\end{definition}

\begin{lemma} \label{cioml-90} Let $X$ be an implicative-ortholattice and let $x,y\in X$.
Then $x\mathcal{C} y$ if and only if $x\mathcal{D} y$. 
\end{lemma}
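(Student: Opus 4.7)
The plan is to unwind both definitions and show they collapse to the same identity via the involutive BE-algebra identities from Lemma \ref{qbe-10}. By Definition \ref{cioml-10}, $x\mathcal{C} y$ means $\varphi_x y=(x\ra y^*)^*$, i.e. $y\wedge_Q x=(x\ra y^*)^*$, while $x\mathcal{D} y$ is the divisibility identity $(Idiv)$: $x\ra (x\ra y)^*=x\ra y^*$. So the whole task is to see that $y\wedge_Q x=(x\ra y^*)^*$ is equivalent to $(Idiv)$.

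First I would compute $y\wedge_Q x$ explicitly from the definition given before Proposition \ref{qbe-20}:
\[
y\wedge_Q x=((y^*\ra x^*)\ra x^*)^*=((x\ra y)\ra x^*)^*,
\]
where the second equality uses Lemma \ref{qbe-10}(6), $y^*\ra x^*=x\ra y$, since $X$ is involutive. Therefore $x\mathcal{C} y$ is equivalent to
\[
((x\ra y)\ra x^*)^*=(x\ra y^*)^*,
\]
and, by involutivity $(DN)$, to $(x\ra y)\ra x^*=x\ra y^*$.

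Next I would apply Lemma \ref{qbe-10}(3), $a\ra b^*=b\ra a^*$, with $a=x\ra y$ and $b=x$, to rewrite the left-hand side:
\[
(x\ra y)\ra x^*=x\ra (x\ra y)^*.
\]
Thus $x\mathcal{C} y$ is equivalent to $x\ra (x\ra y)^*=x\ra y^*$, which is exactly condition $(Idiv)$, i.e. $x\mathcal{D} y$.

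There is no real obstacle here; the only thing to be careful about is choosing the right direction to expand $y\wedge_Q x$ (rather than $x\wedge_Q y$) so that the factor $x\ra y$ appears naturally and matches the shape of $(Idiv)$. The argument uses only involutivity and the BE-algebra identities of Lemma \ref{qbe-10}, and in particular does not require $(impl)$, so the equivalence holds in every implicative-ortholattice as stated (and in fact in any involutive BE algebra).
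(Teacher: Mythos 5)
Your proof is correct and follows essentially the same chain of equivalences as the paper: expand $\varphi_x y=y\wedge_Q x$, strip the outer negation by involutivity, rewrite $y^*\ra x^*$ as $x\ra y$ via Lemma \ref{qbe-10}(6), and commute the negation with Lemma \ref{qbe-10}(3) to land on $(Idiv)$. Your closing observation that the argument uses only the involutive BE identities (not $(impl)$) is accurate, though the paper does not remark on it.
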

\begin{proof}
We have $x\mathcal{C} y$ iff $\varphi_x y=(x\ra y^*)^*$ iff $(y^*\ra x^*)\ra x^*=x\ra y^*$ iff 
$(x\ra y)\ra x^*=x\ra y^*$ iff $x\ra (x\ra y)^*=x\ra y^*$ iff $x\mathcal{D} y$. 
\end{proof}

\begin{corollary} \label{cioml-90-10} An implicative-ortholattice $X$ is an implicative-orthomodular 
lattice if and only if $x\mathcal{D} y$ implies $y\mathcal{D} x$. 
\end{corollary}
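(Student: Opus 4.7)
The plan is to derive this corollary directly from the two results immediately preceding it. By Lemma \ref{cioml-90}, the commutativity relation $\mathcal{C}$ and the divisibility relation $\mathcal{D}$ coincide on any implicative-ortholattice: $x\mathcal{C} y$ if and only if $x\mathcal{D} y$ for all $x,y\in X$. This equivalence is purely a rewriting fact and does not depend on the orthomodularity condition, so it is available in the general i-OL setting in which the corollary is stated.

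First I would point out that, thanks to Lemma \ref{cioml-90}, the property ``$x\mathcal{D} y$ implies $y\mathcal{D} x$ for all $x,y\in X$'' is equivalent to the property ``$x\mathcal{C} y$ implies $y\mathcal{C} x$ for all $x,y\in X$.'' Then I would invoke Theorem \ref{cioml-30}, which asserts precisely that an implicative-ortholattice $X$ is an implicative-orthomodular lattice if and only if $\mathcal{C}$ is symmetric on $X$. Chaining these two equivalences gives the desired biconditional.

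No serious obstacle is expected here, since both ingredients are already established and the argument is a one-line substitution of $\mathcal{D}$ for $\mathcal{C}$. The only point worth being careful about is to verify that Lemma \ref{cioml-90} applies uniformly to all pairs $(x,y)$ (so that quantifier-level symmetry of one relation transfers to the other), which is immediate from the ``iff'' form of the lemma. Thus the proof can be written in essentially one sentence: by Lemma \ref{cioml-90}, the hypothesis ``$x\mathcal{D} y \Rightarrow y\mathcal{D} x$ for all $x,y\in X$'' is equivalent to ``$x\mathcal{C} y \Rightarrow y\mathcal{C} x$ for all $x,y\in X$,'' and by Theorem \ref{cioml-30} the latter holds if and only if $X$ is an implicative-orthomodular lattice.
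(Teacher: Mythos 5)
Your proposal is correct and is exactly the paper's intended argument: the corollary follows by substituting $\mathcal{D}$ for $\mathcal{C}$ via Lemma \ref{cioml-90} and then applying Theorem \ref{cioml-30}. Nothing further is needed.
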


\begin{lemma} \label{cioml-90-15} Let $X$ be an implicative-ortholattice. The following hold for all $x,y\in X$: \\
$(1)$ $x\mathcal{D} x$, $x\mathcal{D} 0$, $0\mathcal{D} x$, $x\mathcal{D} 1$, $1\mathcal{D} x$, 
$x\mathcal{D} x^*$, $x^*\mathcal{D} x;$ \\
$(2)$ if $x\le_L y$ or $x\le_L y^*$, then $x\mathcal{D} y;$ \\
$(3)$ $x\mathcal{D} (y\ra x)$, $x\mathcal{D} (x^*\ra y)$ and $y\mathcal{D} (x^*\ra y)$. \\
If $X$ is an implicative-orthomodular lattice, then: \\
$(4)$ if $x\perp y$, then $x\mathcal{D} y$, $y\mathcal{D} x$, $x\mathcal{D} y^*$, $y^*\mathcal{D} x;$ \\
$(5)$ $x^*\mathcal{D} (x^*\ra y)$, $y^*\mathcal{D} (x^*\ra y)$, $x\mathcal{D} (x^*\ra y)^*$, 
$y\mathcal{D} (x^*\ra y)^*$.  
\end{lemma}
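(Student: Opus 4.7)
The plan is to exploit the equivalence $x\mathcal{C} y \Longleftrightarrow x\mathcal{D} y$ established in Lemma~\ref{cioml-90}, so that the entire statement reduces to already proved facts about the commutativity relation. Each item is obtained by locating the corresponding assertion about $\mathcal{C}$ and transferring it to $\mathcal{D}$.

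For items $(1)$, $(2)$, $(3)$, the strategy is direct: one quotes Lemma~\ref{cioml-20}$(1)$, $(2)$, $(3)$, respectively, which give exactly the analogous commutativity statements $x\mathcal{C} x$, $x\mathcal{C} 0$, $0\mathcal{C} x$, $x\mathcal{C} 1$, $1\mathcal{C} x$, $x\mathcal{C} x^*$, $x^*\mathcal{C} x$; the implication $x\le_L y$ or $x\le_L y^*$ $\Rightarrow x\mathcal{C} y$; and $x\mathcal{C}(y\ra x)$, $x\mathcal{C}(x^*\ra y)$, $y\mathcal{C}(x^*\ra y)$. Applying Lemma~\ref{cioml-90} termwise converts each $\mathcal{C}$ into $\mathcal{D}$ and closes the three parts. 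No calculation is needed beyond this bookkeeping.

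For item $(4)$, I would start from Remark~\ref{cioml-60-05}: in an implicative-orthomodular lattice, $x\perp y$ implies $x\mathcal{C} y$. Since we are in an i-OML, Theorem~\ref{cioml-30} upgrades this to $y\mathcal{C} x$ as well, and Lemma~\ref{cioml-50} then yields $x\mathcal{C} y^*$, from which a second application of Theorem~\ref{cioml-30} gives $y^*\mathcal{C} x$. A final pass through Lemma~\ref{cioml-90} replaces $\mathcal{C}$ by $\mathcal{D}$ in all four assertions.

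For item $(5)$, I would begin with the two statements already provided by part $(3)$: $x\mathcal{D}(x^*\ra y)$ and $y\mathcal{D}(x^*\ra y)$. By Lemma~\ref{cioml-90} these read $x\mathcal{C}(x^*\ra y)$ and $y\mathcal{C}(x^*\ra y)$. Invoking Lemma~\ref{cioml-50}, which states that $u\mathcal{C} v$ entails $u^*\mathcal{C} v$, $u\mathcal{C} v^*$ and $u^*\mathcal{C} v^*$, I obtain $x^*\mathcal{C}(x^*\ra y)$, $y^*\mathcal{C}(x^*\ra y)$, $x\mathcal{C}(x^*\ra y)^*$, and $y\mathcal{C}(x^*\ra y)^*$; translating these back via Lemma~\ref{cioml-90} yields exactly the four divisibility claims of $(5)$. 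The only delicate point in the whole proof is keeping track of which parts require the i-OML hypothesis (parts $(4)$ and $(5)$, since they rely on Theorem~\ref{cioml-30} and Lemma~\ref{cioml-50}, both of which presuppose orthomodularity), but since the statement of $(4)$ and $(5)$ explicitly assumes $X$ is an i-OML, no obstruction arises.
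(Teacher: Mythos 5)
Your proposal is correct and follows essentially the same route as the paper: parts (1)--(3) from Lemma \ref{cioml-20} via the equivalence of $\mathcal{C}$ and $\mathcal{D}$ in Lemma \ref{cioml-90}, part (4) from Remark \ref{cioml-60-05} together with Lemma \ref{cioml-50}, and part (5) from parts (2)--(3) together with Lemma \ref{cioml-50}. The paper's proof is merely a terser citation of the same lemmas.
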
 
\begin{proof}
$(1)$-$(3)$ follow by Lemmas \ref{cioml-20}, \ref{cioml-90}. \\
$(4)$ follows by Remark \ref{cioml-60-05} and Lemma \ref{cioml-50}. \\
$(5)$ follows by $(2)$, $(3)$ and Lemma \ref{cioml-50}. 
\end{proof}

\begin{proposition} \label{cioml-90-20} Any implicative-Boolean algebra is an implicative-orthomodular lattice.  
\end{proposition}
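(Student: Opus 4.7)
The cleanest route is to invoke the characterization just established in Corollary \ref{cioml-90-10}: an implicative-ortholattice is an implicative-orthomodular lattice if and only if the divisibility relation $\mathcal{D}$ is symmetric, i.e.\ $x\mathcal{D} y$ implies $y\mathcal{D} x$ for all $x,y$.

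The plan is to observe that in an implicative-Boolean algebra the relation $\mathcal{D}$ is the total relation. Indeed, by Definition \ref{dioml-100}, an implicative-Boolean algebra is an implicative-ortholattice verifying $(@)$ for every pair $x,y\in X$, and by Remark \ref{cioml-80} condition $(@)$ is equivalent to condition $(Idiv)$. So in such an algebra $(Idiv)$ holds for all $x,y\in X$, which is exactly saying that $x\mathcal{D} y$ for all $x,y\in X$. Consequently, the implication ``$x\mathcal{D} y$ implies $y\mathcal{D} x$'' holds vacuously (its conclusion is always true), and by Corollary \ref{cioml-90-10}, $X$ is an implicative-orthomodular lattice.

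There is really no obstacle here: the whole work has been absorbed into the preceding lemmas. The only thing to be careful about is that Corollary \ref{cioml-90-10} is stated for any implicative-ortholattice, and an implicative-Boolean algebra is by definition an implicative-ortholattice (plus the extra axiom $(@)$), so the corollary applies directly. If one preferred an independent verification, one could instead check $(IOM^{\prime\prime})$ by computing $x\vee_Q (x\ra y)^* = ((x\ra y)\ra x^*)\ra x^* = (x\ra (x\ra y)^*)\ra x^* = (x\ra y^*)\ra x^* = x$, using $(Idiv)$ in the third equality and $(impl)$ in the last; but the one-line argument via $\mathcal{D}$ is more natural in the present context.
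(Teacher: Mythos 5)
Your main argument is correct and is legitimately placed: Corollary \ref{cioml-90-10} and Remark \ref{cioml-80} both precede this proposition, and since an implicative-Boolean algebra satisfies $(@)$ (equivalently $(Idiv)$) for \emph{every} pair, the relation $\mathcal{D}$ is all of $X\times X$, so the symmetry hypothesis of Corollary \ref{cioml-90-10} holds trivially (the conclusion $y\mathcal{D}x$ is always true --- ``trivially'' rather than ``vacuously,'' but that is cosmetic). This is a genuinely different route from the paper's: the paper invokes Theorem \ref{dioml-110} to obtain the distributive law $(Idis_1)$, uses it in a direct equational computation to show that $x\le y$ implies $y\vee_Q x=y$, and then concludes via Theorem \ref{dioml-05-50}. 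Your route is shorter and computation-free, at the price of leaning on the Section 4 machinery behind Theorem \ref{cioml-30} (commutativity via Sasaki projections); the paper's proof is more elementary in its dependencies but requires the explicit calculation.

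One caveat: the optional ``independent verification'' of $(IOM^{\prime\prime})$ at the end is broken. First, $x\vee_Q(x\ra y)^*=(x\ra(x\ra y)^*)\ra(x\ra y)^*=((x\ra y)\ra x^*)\ra(x\ra y)^*$, which is not $((x\ra y)\ra x^*)\ra x^*$; the latter is $(x\ra y)\vee_Q x^*$, a different element in general. Second, the final step $(x\ra y^*)\ra x^*=x$ is not an instance of $(impl)$: $(impl)$ yields $(a\ra b)\ra a=a$, whereas $(x\ra y^*)\ra x^*=(y\ra x^*)\ra x^*=y\vee_Q x^*$, which need not equal $x$. Since you present this only as an aside and rest the proof on the $\mathcal{D}$-argument, the proposal stands, but that sketch should be deleted or repaired.
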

\begin{proof}
Let $X$ be an i-Boolean algebra. 
By definition, $X$ is an i-OL. Let $x, y\in X$ such that $x\le_Q y$, hence, by Proposition \ref{qbe-20}$(4)$, 
$x\le y$, that is $x\ra y=1$. 
According to Theorem \ref{dioml-110}, $X$ satisfies condition 
$(Idis_1)$ $((x^*\ra y)\ra z^*)^*=(x\ra z^*)\ra (y\ra z^*)^*$, and we obtain: 
\begin{align*} 
y\vee_Q x
&=(y\ra x)\ra x=((y^*)^*\ra x)\ra (x^*)^*=((y^*\ra x)\ra (x\ra x)^*)^* \\
&=y^*\ra x=x^*\ra y=((x^*\ra y)\ra (y\ra y)^*)^* \\
&=((x^*)^*\ra y)\ra (y^*)^*=(x\ra y)\ra y=1\ra y=y. 
\end{align*}
By Theorem \ref{dioml-05-50}, $X$ is an i-OML. 
\end{proof}

The following results provide certain characterizations of implicative-Boolean algebras that show when  
the quantum logic becomes classical. These results help to understand how quantum logic generalizes and differs 
from classical logic. 

\begin{theorem} \label{cioml-100} Let $X$ be an implicative-orthomodular lattice. The following are equivalent: \\
$(a)$ $X$ is an implicative-Boolean algebra; \\
$(b)$ $x\wedge_Q y=(x\ra y^*)^*$ for all $x, y\in X;$ \\   
$(c)$ $x\wedge_Q y=y\wedge_Q x$ for all $x, y\in X;$ \\
$(d)$ $x\vee_Q y=y\vee_Q x$ for all $x, y\in X;$ \\
$(e)$ $x\mathcal{C} y=y\mathcal{C} x$ for all $x, y\in X;$ \\
$(f)$ $x\mathcal{D} y=y\mathcal{D} x$ for all $x, y\in X$. 
\end{theorem}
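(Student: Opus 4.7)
The plan is to deduce the whole circle of equivalences from the characterizations of the commutativity relation already established earlier in the paper. In an implicative-orthomodular lattice the conditions (b), (c), (d), (e), (f) are in fact restatements of a single assertion: every pair of elements commutes. Indeed, Proposition~\ref{cioml-60-10} says that $x\mathcal{C}y$ iff $x\wedge_Q y=(x\ra y^*)^*$; Corollary~\ref{cioml-60-20} says $x\mathcal{C}y$ iff $x\wedge_Q y=y\wedge_Q x$ iff $x\vee_Q y=y\vee_Q x$; and Lemma~\ref{cioml-90} says $x\mathcal{C}y$ iff $x\mathcal{D}y$. So I would first point out that these four results immediately give $(b)\LR (c)\LR (d)\LR (e)\LR (f)$, after noting that in an i-OML the relations $\mathcal{C}$ and $\mathcal{D}$ are already symmetric by Theorem~\ref{cioml-30} and Corollary~\ref{cioml-90-10}, which makes the symmetric ``$=$'' phrasing in (e) and (f) unambiguous.

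The remaining step is to connect one of these to (a), and the cleanest choice is (f). By Remark~\ref{cioml-80}, conditions $(@)$ and $(Idiv)$ are equivalent in any implicative-ortholattice; hence, by Definition~\ref{dioml-100}, $X$ is an implicative-Boolean algebra precisely when $(Idiv)$ holds for every pair $x,y\in X$, which is exactly the statement that $x\mathcal{D}y$ for all $x,y\in X$. Since $\mathcal{D}$ is symmetric in an i-OML, this coincides with (f). Combining with the previous block of equivalences closes the circle.

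I expect no real technical obstacle: the proposition is essentially a packaging corollary that collects together the characterizations from Section~4 and the identification $\mathcal{C}=\mathcal{D}$ with the definition of an i-Boolean algebra. The only care needed is bookkeeping, namely to verify that each of (b)--(f) is being read as ``the condition holds for every pair'' (so that it asserts universal commutativity/divisibility rather than just a single instance), and to invoke symmetry of $\mathcal{C}$ (respectively $\mathcal{D}$) at the appropriate place so that the equational notation ``$x\mathcal{C}y=y\mathcal{C}x$'' is well-founded.
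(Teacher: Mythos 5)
Your proposal is correct and takes essentially the same route as the paper, whose entire proof is the citation of Lemma \ref{cioml-90}, Proposition \ref{cioml-60-10} and Corollaries \ref{cioml-60-20}, \ref{cioml-90-10}. You simply spell out that chain, adding the (implicit in the paper) link between $(a)$ and universal divisibility via Remark \ref{cioml-80} and the definition of $\mathcal{D}$, and you correctly flag that $(e)$ and $(f)$ must be read as asserting universal commutativity/divisibility.
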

\begin{proof}
It follows from Lemma \ref{cioml-90}, Proposition \ref{cioml-60-10} and Corollaries \ref{cioml-60-20}, \ref{cioml-90-10}. 
\end{proof} 

\begin{theorem} \label{cioml-100-10} Let $X$ be an implicative-orthomodular lattice. The following are equivalent: \\
$(a)$ $X$ is an implicative-Boolean algebra; \\  
$(b)$ $x\wedge_Q y\le_L x$ for all $x, y\in X;$ \\  
$(c)$ $x\le_L x\vee_Q y$ for all $x, y\in X$.   
\end{theorem}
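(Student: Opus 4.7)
The plan is to run a cycle $(a)\Rightarrow (b)\Rightarrow (a)$ together with a separate duality argument $(b)\Leftrightarrow (c)$, leveraging the already-established symmetry characterization of implicative-Boolean algebras given by Theorem \ref{cioml-100}.

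For $(a)\Rightarrow (b)$, I would observe that in an implicative-Boolean algebra Theorem \ref{cioml-100}(c) gives $x\wedge_Q y=y\wedge_Q x$, and then use Proposition \ref{ioml-50}(6), which always supplies $y\wedge_Q x\le_L x$ in any i-OML. Combining these immediately yields $x\wedge_Q y\le_L x$.

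For the harder direction $(b)\Rightarrow (a)$, I would start from the assumption $x\wedge_Q y\le_L x$ for all $x,y$. Proposition \ref{ioml-50}(6) simultaneously gives $x\wedge_Q y\le_L y$. Since $x\wedge_Q y$ is $\le_L$-below both $x$ and $y$, Proposition \ref{ioml-50-10}(1) (which needs the i-OML hypothesis) produces $x\wedge_Q y\le_L y\wedge_Q x$. Swapping the roles of $x$ and $y$ in the hypothesis and applying the same argument gives $y\wedge_Q x\le_L x\wedge_Q y$, and the antisymmetry of $\le_L$ from Proposition \ref{qbe-20}(7) forces equality $x\wedge_Q y=y\wedge_Q x$. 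Theorem \ref{cioml-100}(c) then identifies $X$ as an implicative-Boolean algebra.

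For $(b)\Leftrightarrow (c)$, I would exploit the De Morgan identity $(x\wedge_Q y)^*=x^*\vee_Q y^*$ (a restatement of Proposition \ref{qbe-20}(3)) together with the order-reversing involution $u\le_L v\Leftrightarrow v^*\le_L u^*$ from Lemma \ref{iol-30-20}(1). Substituting $x^*,y^*$ for $x,y$ in (b) yields $x^*\wedge_Q y^*\le_L x^*$, which by taking orthocomplements is precisely $x\le_L (x^*\wedge_Q y^*)^*=x\vee_Q y$, i.e.\ (c); running the argument in reverse gives the converse. I do not anticipate any serious obstacle: the only subtle point is making sure that Proposition \ref{ioml-50-10}(1), used in $(b)\Rightarrow(a)$, indeed requires only the i-OML assumption already in place, so that no circularity with Theorem \ref{cioml-100} arises.
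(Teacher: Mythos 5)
Your proof is correct, and the half of it dealing with $(a)\Leftrightarrow(b)$ is essentially the paper's own argument: the paper likewise deduces $(a)\Rightarrow(b)$ from commutativity ($x\mathcal{D}y$, hence $x\mathcal{C}y$, hence $x\wedge_Q y=y\wedge_Q x\le_L x$), and for $(b)\Rightarrow(a)$ it uses exactly your chain --- $x\wedge_Q y\le_L x,y$ gives $x\wedge_Q y\le_L y\wedge_Q x$ by Proposition \ref{ioml-50-10}$(1)$, the symmetric inequality follows by swapping $x$ and $y$, antisymmetry of $\le_L$ yields $x\wedge_Q y=y\wedge_Q x$, and Corollary \ref{cioml-60-20} together with Lemma \ref{cioml-90} (equivalently, your appeal to Theorem \ref{cioml-100}$(c)$) closes the loop. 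Your worry about circularity is unfounded for the reason you give: Proposition \ref{ioml-50-10}$(1)$ and Theorem \ref{cioml-100} are established for i-OMLs independently of this theorem.

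Where you genuinely diverge is in handling $(c)$. The paper does not relate $(c)$ to $(b)$ at all; it runs a second, parallel argument proving $(a)\Leftrightarrow(c)$ directly, using Proposition \ref{ioml-50-10}$(4)$ (the join analogue of $(1)$) to get $x\vee_Q y\le_L y\vee_Q x$ and its symmetric counterpart, then invoking Corollary \ref{cioml-60-20} via $x\vee_Q y=y\vee_Q x$. Your route instead derives $(b)\Leftrightarrow(c)$ by pure duality: substitute $x^*,y^*$ in $(b)$, apply the order-reversal $u\le_L v\Leftrightarrow v^*\le_L u^*$ of Lemma \ref{iol-30-20}$(1)$, and use $x\vee_Q y=(x^*\wedge_Q y^*)^*$ from Proposition \ref{qbe-20}$(3)$. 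This is sound and somewhat more economical, since it avoids repeating the meet argument on the join side and needs no appeal to Proposition \ref{ioml-50-10}$(4)$; the paper's version has the mild advantage of exhibiting $(c)$ as directly equivalent to $(a)$ without passing through $(b)$. Either organization is acceptable.
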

\begin{proof} 
$(a)\Rightarrow (b)$ If $X$ is an i-Boolean algebra and $x,y\in X$, then $x\mathcal{D} y$; thus, 
$x\mathcal{C} y$. It follows that $x\wedge_Q y=y\wedge_Q x\le_L x$. \\
$(b)\Rightarrow (a)$ Let $X$ be an i-OML such that $x\wedge_Q y\le_L x$ for all $x, y\in X$. 
Since $x\wedge_Q y\le_L y$, applying Proposition \ref{ioml-50-10}$(1)$, we get $x\wedge_Q y\le_L y\wedge_Q x$. 
Similarly, from $y\wedge_Q x\le_L x$ and $y\wedge_Q x\le_L y$, we have $y\wedge_Q x\le_L x\wedge_Q y$; hence, $x\wedge_Q y=y\wedge_Q x$. 
By Corollary \ref{cioml-60-20}, it follows that $x\mathcal{C} y$; therefore, by Lemma \ref{cioml-90}, $X$ is an i-Boolean algebra. \\
$(a)\Rightarrow (c)$ Since $X$ is an i-Boolean algebra and $x,y\in X$, we have $x\mathcal{D} y$.  
It follows that $x\mathcal{C} y$, and $x\le_L y\vee_Q x=x\vee_Q y$. \\
$(c)\Rightarrow (a)$ Let $X$ be an i-OML such that $x\le_L x\vee_Q y$ for all $x, y\in X$.
Since $y\le_L x\vee_Q y$, applying Proposition \ref{ioml-50-10}$(4)$, we get $y\vee_Q x\le_L x\vee_Q y$. 
Similarly, from $x\le_L y\vee_Q x$ and $y\le_L y\vee_Q x$, we have $x\vee_Q y\le_L y\vee_Q x$. 
Hence, $x\vee_Q y=y\vee_Q x$, and applying Corollary \ref{cioml-60-20}, we conclude that $X$ is an i-Boolean algebra. 
\end{proof}

\begin{theorem} \label{cioml-100-20} Let $X$ be an implicative-orthomodular lattice. The following are equivalent: \\
$(a)$ $X$ is an implicative-Boolean algebra; \\  
$(b)$ $\le$ $\subseteq$ $\le_L$.  
\end{theorem}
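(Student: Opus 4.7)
The plan is to prove the two implications separately, leveraging the characterizations already established in Theorem \ref{cioml-100-10} together with the equivalence $(@)\Longleftrightarrow (Idiv)$ of Remark \ref{cioml-80}. Recall that $\le_L\subseteq\le$ by Lemma \ref{iol-30-20}$(6)$, so condition $(b)$ amounts to the reverse inclusion, and hence the two together would give $\le=\le_L$.

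For $(a)\Rightarrow (b)$, I would suppose $X$ is an implicative-Boolean algebra and take $x,y\in X$ with $x\le y$, that is, $x\ra y=1$. By Remark \ref{cioml-80}, $X$ satisfies the divisibility condition $(Idiv)$, so
\[
x\ra y^{*}=x\ra (x\ra y)^{*}=x\ra 1^{*}=x\ra 0=x^{*}.
\]
Applying $^{*}$ and involutivity yields $(x\ra y^{*})^{*}=x$, which is precisely $x\le_L y$.

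For $(b)\Rightarrow (a)$, the strategy is to reduce to Theorem \ref{cioml-100-10}$(b)$, which requires $x\wedge_Q y\le_L x$ for all $x,y\in X$. I would first verify that $x\wedge_Q y\le x$ holds in every implicative-ortholattice, independent of any further hypothesis: unfolding $x\wedge_Q y=((x^{*}\ra y^{*})\ra y^{*})^{*}$ and applying the involutive identity $u^{*}\ra x=x^{*}\ra u$ from Lemma \ref{qbe-10}$(5)$ together with $(BE_{4})$ and $(BE_{1})$, one gets
\[
(x\wedge_Q y)\ra x=x^{*}\ra ((x^{*}\ra y^{*})\ra y^{*})=(x^{*}\ra y^{*})\ra (x^{*}\ra y^{*})=1.
\]
The hypothesis $\le\subseteq\le_L$ then lifts this to $x\wedge_Q y\le_L x$ for all $x,y\in X$, and Theorem \ref{cioml-100-10}$(b)$ concludes that $X$ is an implicative-Boolean algebra.

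Neither direction presents serious difficulty. The main point is recognising that condition $(b)$ is exactly what is needed to promote the ``automatic'' BE-inequality $x\wedge_Q y\le x$ into the $\le_L$-inequality required by the previously proved characterization; after that, the forward direction is a one-line unfolding of $(Idiv)$, and the backward direction is a short computation using $(BE_{4})$ and involutivity.
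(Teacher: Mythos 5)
Your proof is correct. The forward direction $(a)\Rightarrow(b)$ is essentially identical to the paper's: both unfold $(Idiv)$ at $x\ra y=1$ to get $x\ra y^{*}=x^{*}$. The backward direction takes a genuinely different (though closely related) route. The paper invokes Proposition \ref{ioml-50}$(7)$ to get $(x\wedge_Q y)\ra (y\wedge_Q x)=1$ and $(y\wedge_Q x)\ra (x\wedge_Q y)=1$, uses $(b)$ to upgrade both to $\le_L$, concludes $x\wedge_Q y=y\wedge_Q x$ by antisymmetry, and then applies the commutativity characterization of Theorem \ref{cioml-100}. You instead use the elementary BE-algebra identity $(x\wedge_Q y)\ra x=1$ (which indeed holds in any involutive BE algebra, and is verified exactly this way inside the paper's proof of Theorem \ref{dioml-05-60}), upgrade it via $(b)$ to $x\wedge_Q y\le_L x$, and invoke Theorem \ref{cioml-100-10}$(b)$. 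Your version is marginally more economical, since the auxiliary inequality you upgrade needs no orthomodularity, whereas Proposition \ref{ioml-50}$(7)$ is an i-OML fact; of course both arguments still need the i-OML hypothesis to apply the respective characterization theorems, so nothing is gained in generality. One trivial slip: the inclusion $\le_L\subseteq\le$ that you recall in your preamble is Proposition \ref{qbe-20}$(6)$, not Lemma \ref{iol-30-20}$(6)$; this remark is not used in the proof, so it does not affect correctness.
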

\begin{proof} 
$(a)\Rightarrow (b)$ Let $X$ be an i-Boolean algebra and let $x,y\in X$ such that $x\le y$, 
that is, $x\ra y=1$. 
Since $x\ra (x\ra y)^*=x\ra y^*$, we get $x^*=x\ra y^*$; hence, $x\le_L y$. \\
$(b)\Rightarrow (a)$ By Proposition \ref{ioml-50}$(7)$, $(x\wedge_Q y)\ra (y\wedge_Q x)=1$ and $(y\wedge_Q x)\ra (x\wedge_Q y)=1$, 
for all $x,y\in X$. It follows that $x\wedge_Q y\le y\wedge_Q x$ and $y\wedge_Q x\le x\wedge_Q y$, and by $(b)$, we get 
$x\wedge_Q y\le_L y\wedge_Q x$ and $y\wedge_Q x\le_L x\wedge_Q y$. Hence, $y\wedge_Q x=x\wedge_Q y$ for all $x,y\in X$, and applying 
Theorem \ref{cioml-100}, $X$ is an i-Boolean algebra. 
\end{proof}

\begin{corollary} \label{cioml-100-30} In any implicative-Boolean algebra, $\le = \le_L = \le_Q$.  
\end{corollary}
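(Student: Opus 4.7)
The plan is to chain together results already established in the excerpt. Since $X$ is an implicative-Boolean algebra, Proposition \ref{cioml-90-20} gives that $X$ is an implicative-orthomodular lattice, so Corollary \ref{dioml-05-55} immediately yields $\le_L = \le_Q$. Thus it only remains to show $\le = \le_L$ under the i-Boolean hypothesis.

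For the inclusion $\le_L \subseteq \le$, I would invoke Lemma \ref{iol-30-20}$(6)$, which holds in any implicative-ortholattice. The reverse inclusion $\le \subseteq \le_L$ is exactly the content of Theorem \ref{cioml-100-20}$(b)$, which characterizes implicative-Boolean algebras among implicative-orthomodular lattices by this very inclusion. Combining the two inclusions gives $\le = \le_L$, and together with $\le_L = \le_Q$ the chain of equalities $\le = \le_L = \le_Q$ follows.

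There is essentially no obstacle here: the corollary is a direct assembly of Proposition \ref{cioml-90-20}, Corollary \ref{dioml-05-55}, Lemma \ref{iol-30-20}$(6)$, and Theorem \ref{cioml-100-20}. The only thing to be careful about is not to circularly re-derive $\le \subseteq \le_L$, since that is precisely the nontrivial direction delivered by Theorem \ref{cioml-100-20}, which in turn relies on the i-OML structure guaranteed by Proposition \ref{cioml-90-20}. So the proof amounts to a two-line citation, and I would present it as such without any further computation.
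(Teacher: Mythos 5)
Your argument is correct and follows the same route the paper intends: reduce to the i-OML case via Proposition \ref{cioml-90-20}, get $\le_L=\le_Q$ from Corollary \ref{dioml-05-55}, and get $\le\subseteq\le_L$ from Theorem \ref{cioml-100-20}. One citation is off, though not a mathematical gap: the inclusion $\le_L\subseteq\le$ is Proposition \ref{qbe-20}$(6)$, not Lemma \ref{iol-30-20}$(6)$ (the latter states that $x\wedge_Q y=x$ implies $x\wedge_Q y^*=0$).
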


\begin{definition} \label{cioml-120} 
\emph{
The \emph{center} of an implicative-orthomodular lattice $X$ is the set 
$\mathcal{CC}(X)=\{x\in X\mid x \mathcal{C} y$ for all $y\in X\}$.  
}
\end{definition}

$\mathcal{CC}(X)$ is called the \emph{implicative-Boolean center} of $X$.

\begin{lemma} \label{cioml-120-10} Let $X$ be an implicative-orthomodular lattice. 
If $x,y,z\in X$ such that $x\mathcal{C} z$ and $y\mathcal{C} z$, then the following holds: \\
$\hspace*{2.00cm}$ $x\ra\ y\le_L ((x\ra y)\ra z^*)\ra ((x\ra y)\ra z)^*$. 
\end{lemma}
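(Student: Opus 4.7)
The plan is to observe that the stated inequality is equivalent to showing $(x\ra y)\,\mathcal{C}\,z$. Indeed, applying Lemma~\ref{iol-30-20}(11) with $x\ra y$ substituted for $x$ and $z$ for $y$ gives, in every i-OL, the reverse inequality $((x\ra y)\ra z^*)\ra((x\ra y)\ra z)^* \le_L x\ra y$ for free. Combined with the claim and the antisymmetry of $\le_L$ (Proposition~\ref{qbe-20}(7)), this produces the equality $((x\ra y)\ra z^*)\ra((x\ra y)\ra z)^* = x\ra y$, which by Proposition~\ref{cioml-60} is precisely $(x\ra y)\,\mathcal{C}\,z$. Conversely, once commutativity of $x\ra y$ with $z$ is established, the equality, and in particular the $\le_L$ inequality we seek, is immediate from the same proposition.

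Hence I would reduce the goal to proving that $(x\ra y)\,\mathcal{C}\,z$, assuming $x\,\mathcal{C}\,z$ and $y\,\mathcal{C}\,z$. Passing through Lemma~\ref{cioml-90} to the divisibility formulation, the hypotheses become $x\ra(x\ra z)^* = x\ra z^*$ and $y\ra(y\ra z)^* = y\ra z^*$, and the target becomes $(x\ra y)\ra((x\ra y)\ra z)^* = (x\ra y)\ra z^*$. Lemma~\ref{cioml-50} then enriches the available data by supplying the divisibilities of $x^*, y^*$ with $z$ and of $x, y$ with $z^*$, giving flexibility in deciding which factor to expand at each intermediate stage.

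The remaining step is a direct algebraic computation on the expression $(x\ra y)\ra((x\ra y)\ra z)^*$, carried out using the arrow identities of Lemma~\ref{qbe-10}: the swap $a\ra b^* = b\ra a^*$ from (3), the contraposition $a^*\ra b^* = b\ra a$ from (6), and the associativity-style rewrites $(a\ra b)^*\ra c = a\ra(b^*\ra c)$ from (7) and $a\ra(b\ra c) = (a\ra b^*)^*\ra c$ from (8). The idea is to peel off the outer occurrences of $x\ra y$ by pushing them through the arrow hierarchy, then apply the divisibility identities for $x$ and $y$ at the appropriate intermediate stages to collapse the nested implications on $z$ and eventually recover $(x\ra y)\ra z^*$. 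Auxiliary use of axiom $(iG)$ and Lemma~\ref{iol-30-20} (in particular (8), (10) and (12)) will control the $\le_L$ order along the way.

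The main obstacle will be the algebraic bookkeeping: two nested implications intertwine $x$, $y$ and $z$, and neither commutativity hypothesis alone suffices to collapse them; the correct sequence of rewrites must coordinate the two pieces of data. Structurally, this lemma is the implicative analogue of the orthomodular-lattice fact that the commutator $\{u\in X : u\,\mathcal{C}\,z\}$ is closed under the orthomodular operations, in particular under $(u_1,u_2)\mapsto u_1\ra u_2$; as such, it prepares the ground for proving that the implicative-Boolean center $\mathcal{CC}(X)$ is a sub-BE-algebra of $X$.
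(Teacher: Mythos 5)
Your opening observation is correct and well taken: by Lemma \ref{iol-30-20}(11) the reverse inequality $((x\ra y)\ra z^*)\ra((x\ra y)\ra z)^*\le_L x\ra y$ is automatic, so the lemma is equivalent to the equality, i.e.\ to $(x\ra y)\,\mathcal{C}\,z$ via Proposition \ref{cioml-60} --- which is exactly how the paper exploits the lemma afterwards in Theorem \ref{cioml-130}. The problem is that everything after this reduction is a plan rather than a proof. The whole content of the lemma is the ``direct algebraic computation'' you defer, and the strategy you describe for it --- rewriting $(x\ra y)\ra((x\ra y)\ra z)^*$ with the arrow identities until the two divisibility hypotheses can be ``applied at the appropriate intermediate stages'' --- does not go through as stated: the hypotheses are the identities $x\ra(x\ra z)^*=x\ra z^*$ and $y\ra(y\ra z)^*=y\ra z^*$ (and their variants for $x^*,y^*$), and no rewriting of the target via Lemma \ref{qbe-10}(3),(6),(7),(8) makes these appear as literal subterms, because $x$ and $y$ remain entangled inside $x\ra y$. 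You correctly flag this coordination problem as the main obstacle, but you do not resolve it; note also that your reformulation replaces the one-sided inequality by an equality, which is the harder of the two directions.

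The missing ingredient is Proposition \ref{qbe-20}(9), the exchange law $(a\ra b^*)^*\ra(c\ra d^*)=(a\ra c^*)^*\ra(b\ra d^*)$, which is what the paper uses to decouple $x$ from $y$. Concretely, the paper proves the inequality directly: from $x^*,y\le_L x\ra y$ and Lemma \ref{iol-30-20}(1),(8),(10) one obtains $(x^*\ra z^*)\ra(y\ra z^*)^*\le_L((x\ra y)\ra z^*)^*$ and $(x^*\ra z)\ra(y\ra z)^*\le_L((x\ra y)\ra z)^*$; Lemma \ref{iol-30-20}(12) combines these into $((x^*\ra z^*)\ra(y\ra z^*)^*)^*\ra((x^*\ra z)\ra(y\ra z)^*)\le_L((x\ra y)\ra z^*)\ra((x\ra y)\ra z)^*$; and Proposition \ref{qbe-20}(9) then regroups the left-hand side as $((x^*\ra z^*)\ra(x^*\ra z)^*)^*\ra((y\ra z^*)\ra(y\ra z)^*)$, whose two blocks collapse to $x^*$ and $y$ by Proposition \ref{cioml-60} (using $x^*\,\mathcal{C}\,z$, supplied by Lemma \ref{cioml-50}), giving exactly $x\ra y$ on the left. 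Without this exchange step, or an equivalent device for separating the two commutativity hypotheses, your outline cannot be completed.
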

\begin{proof} Using Lemma \ref{iol-30-20}$(1)$,$(8)$,$(10)$, we have successively: \\
$\hspace*{2.00cm}$ $x^*, y\le_L x\ra y;$ \\
$\hspace*{2.00cm}$ $(x\ra y)\ra z^*\le_L x^*\ra z^*, y\ra z^*;$ \\
$\hspace*{2.00cm}$ $(x^*\ra z^*)^*, (y\ra z^*)^*\le_L ((x\ra y)\ra z^*)^*;$ \\
$\hspace*{2.00cm}$ $(x^*\ra z^*)\ra (y\ra z^*)^*\le_L ((x\ra y)\ra z^*)^*$. \\
Similarly we get $(x^*\ra z)\ra (y\ra z)^*\le_L ((x\ra y)\ra z)^*$. \\
Applying Lemma \ref{iol-30-20}$(12)$, it follows that: \\
$\hspace*{2.00cm}$ $((x^*\ra z^*)\ra (y\ra z^*)^*)^*\ra ((x^*\ra z)\ra (y\ra z)^*)$ \\
$\hspace*{6.00cm}$ $\le_L ((x\ra y)\ra z^*)\ra ((x\ra y)\ra z)^*$, \\ 
and by Proposition \ref{qbe-20}$(9)$, we get: \\
$\hspace*{2.00cm}$ $((x^*\ra z^*)\ra (x^*\ra z)^*)^*\ra ((y\ra z^*)\ra (y\ra z)^*)$ \\
$\hspace*{6.00cm}$ $\le_L ((x\ra y)\ra z^*)\ra ((x\ra y)\ra z)^*$. \\
Since $x\mathcal{C} z$ and $y\mathcal{C} z$, by Proposition \ref{cioml-60}, we have 
$(x^*\ra z^*)\ra (x^*\ra z)^*=x^*$ and $(y\ra z^*)\ra (y\ra z)^*=y$. 
Hence, $x\ra\ y\le_L ((x\ra y)\ra z^*)\ra ((x\ra y)\ra z)^*$. 
\end{proof}

Since implicative-Boolean subalgebras in an implicative-orthomodular lattice correspond to sets of commuting 
quantum propositions, it is important to investigate their existence. 

\begin{theorem} \label{cioml-130} 
The center $\mathcal{CC}(X)$ of an implicative-orthomodular lattice $X$ is an implicative-Boolean subalgebra 
of $X$.  
\end{theorem}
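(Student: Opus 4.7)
The plan is to verify three things for $\mathcal{CC}(X)$: that it contains the constants $0,1$, that it is closed under the operations $^{*}$ and $\ra$, and that it satisfies the axiom $(Idiv)$ (equivalently, $(@)$) which upgrades an implicative-ortholattice to an implicative-Boolean algebra. Since the i-OL axioms are universally quantified identities, they automatically hold in any subset closed under the operations, so only closure and $(Idiv)$ need genuine work.

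First, I would dispatch the easy items. By Lemma \ref{cioml-20}(1), $1\mathcal{C} y$ and $0\mathcal{C} y$ for every $y\in X$, hence $0,1\in \mathcal{CC}(X)$. Closure under $^{*}$ is immediate from Lemma \ref{cioml-50}: if $x\mathcal{C} y$ for every $y$, then in particular $x^{*}\mathcal{C} y$ for every $y$, so $x^{*}\in \mathcal{CC}(X)$.

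The hard part is closure under $\ra$. Given $x,y\in \mathcal{CC}(X)$ and an arbitrary $z\in X$, I must show $(x\ra y)\mathcal{C} z$. By Proposition \ref{cioml-60}, this is equivalent to the identity
\[
((x\ra y)\ra z^{*})\ra ((x\ra y)\ra z)^{*}=x\ra y.
\]
Since $x,y\in \mathcal{CC}(X)$, we have $x\mathcal{C} z$ and $y\mathcal{C} z$, so Lemma \ref{cioml-120-10} delivers the $\le_{L}$-inequality
\[
x\ra y\le_{L}((x\ra y)\ra z^{*})\ra ((x\ra y)\ra z)^{*}.
\]
The reverse inequality is precisely Lemma \ref{iol-30-20}(11) applied with $x:=x\ra y$ and $y:=z$. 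Antisymmetry of $\le_{L}$ (Proposition \ref{qbe-20}(7)) then yields the required equality, so $(x\ra y)\mathcal{C} z$ for every $z$, proving $x\ra y\in \mathcal{CC}(X)$.

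Having established that $\mathcal{CC}(X)$ is a subalgebra, it remains to check that it is implicative-Boolean. For any $x,y\in \mathcal{CC}(X)$, the definition of the center gives $x\mathcal{C} y$, and Lemma \ref{cioml-90} translates this into $x\mathcal{D} y$, i.e.\ $x\ra (x\ra y)^{*}=x\ra y^{*}$, which is condition $(Idiv)$. By Remark \ref{cioml-80}, $(Idiv)$ and $(@)$ are equivalent in any implicative-ortholattice, so $\mathcal{CC}(X)$ satisfies $(@)$ and is therefore an implicative-Boolean algebra by Definition \ref{dioml-100}. The principal technical obstacle in this argument is the closure under $\ra$, where the interplay of Lemma \ref{cioml-120-10} (which supplies the nontrivial direction) and Lemma \ref{iol-30-20}(11) (which supplies the easy direction) is essential.
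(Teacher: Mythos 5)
Your proof is correct and follows essentially the same route as the paper: the paper likewise gets $0,1\in\mathcal{CC}(X)$ from Lemma \ref{cioml-20}, proves closure under $\ra$ by squeezing $((x\ra y)\ra z^{*})\ra ((x\ra y)\ra z)^{*}$ between Lemma \ref{cioml-120-10} and Lemma \ref{iol-30-20}(11), and concludes the Boolean property via the equivalence $\mathcal{C}=\mathcal{D}$ of Lemma \ref{cioml-90}. Your additional explicit check of closure under $^{*}$ via Lemma \ref{cioml-50} and the explicit appeal to Proposition \ref{cioml-60} are harmless refinements of steps the paper leaves implicit.
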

\begin{proof}
According to Lemma \ref{cioml-20}, $0,1\in \mathcal{CC}(X)$. Let $x,y\in \mathcal{CC}(X)$ and let $z\in X$, 
that is, $x\mathcal{C} z$ and $y\mathcal{C} z$. 
By Lemma \ref{cioml-120-10}, we have $x\ra y\le_L ((x\ra y)\ra z^*)\ra ((x\ra y)\ra z)^*$.   
On the other hand, by Lemma  \ref{iol-30-20}$(11)$, $((x\ra y)\ra z^*)\ra ((x\ra y)\ra z)^*\le_L x\ra y$; hence,   
$((x\ra y)\ra z^*)\ra ((x\ra y)\ra z)^*= x\ra y$. 
It follows that $(x\ra y)\mathcal{C} z$; therefore, $x\ra y\in \mathcal{CC}(X)$. 
Hence, $\mathcal{CC}(X)$ is a subalgebra of $X$. 
Since by Lemma \ref{cioml-90}, $x\mathcal{C} y$ if and only if $x\mathcal{D} y$, then $\mathcal{CC}(X)$ 
is an i-Boolean algebra. 
\end{proof}
\noindent

\begin{corollary} \label{cioml-130-10} An implicative-orthomodular lattice $X$ is an implicative-Boolean algebra  
if and only if $\mathcal{CC}(X)=X$, that is, $x\mathcal{C} y=y\mathcal{C} x$ for all $x,y\in X$, or equivalently, 
$x\mathcal{D} y=y\mathcal{D} x$ for all $x,y\in X$. 
\end{corollary}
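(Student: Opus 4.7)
The plan is to deduce this corollary directly from the machinery already assembled, namely Theorem \ref{cioml-100}, Theorem \ref{cioml-130}, and Lemma \ref{cioml-90}. The statement collects three equivalent reformulations of "$X$ is i-Boolean", so the proof naturally splits into showing $X$ i-Boolean $\Longleftrightarrow \mathcal{CC}(X)=X$, and then noting that the conditions on $\mathcal{C}$ and $\mathcal{D}$ are simply rewritings of $\mathcal{CC}(X)=X$.

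For the forward implication, I would start by assuming $X$ is an implicative-Boolean algebra. By Theorem \ref{cioml-100}$(e)$, one has $x\mathcal{C} y$ (and hence $y\mathcal{C} x$) for every pair $x,y\in X$. This means every $x\in X$ satisfies the defining condition of $\mathcal{CC}(X)$, so $\mathcal{CC}(X)=X$. For the converse, assuming $\mathcal{CC}(X)=X$, Theorem \ref{cioml-130} already tells us that $\mathcal{CC}(X)$ is an implicative-Boolean subalgebra of $X$; since it equals $X$, we conclude that $X$ is itself an implicative-Boolean algebra.

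It remains only to observe the stated equivalences between the three reformulations. The condition $\mathcal{CC}(X)=X$ unfolds, by Definition \ref{cioml-120}, to "$x\mathcal{C} y$ for all $x,y\in X$", which in turn is equivalent to the symmetric statement "$x\mathcal{C} y = y\mathcal{C} x$ for all $x,y\in X$" (one direction is trivial; the other uses Theorem \ref{cioml-30}, but even without invoking it, the symmetric form is logically equivalent to the universal form). Finally, Lemma \ref{cioml-90} gives $x\mathcal{C} y \Longleftrightarrow x\mathcal{D} y$, which transfers the same characterization to the divisibility relation $\mathcal{D}$.

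There is essentially no obstacle here: the corollary is a packaging of Theorems \ref{cioml-100} and \ref{cioml-130} together with Lemma \ref{cioml-90}. The only point requiring a moment of care is making sure that "$\mathcal{CC}(X)=X$" is read as the universal commutativity statement rather than as merely containing $1$ and $0$, but this is immediate from the definition of the center.
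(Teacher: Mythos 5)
Your proof is correct and follows the route the paper intends: the corollary is stated without proof as an immediate consequence of Theorem \ref{cioml-100}, Theorem \ref{cioml-130}, and Lemma \ref{cioml-90}, which is precisely how you assemble it (indeed Theorem \ref{cioml-100} alone already gives both directions of the equivalence with universal commutativity, so your appeal to Theorem \ref{cioml-130} for the converse is a harmless redundancy). The only caveat is your parenthetical claiming the ``symmetric form'' is logically equivalent to the universal form without further input; read as a biconditional it is not, but since the paper's notation $x\mathcal{C} y=y\mathcal{C} x$ in Theorem \ref{cioml-100}$(e)$ must be interpreted as universal commutativity anyway, this does not affect the argument.
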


\begin{theorem} \label{cioml-140} 
An implicative-ortholattice $X$ is an implicative-orthomodular lattice if and only if every two orthogonal 
elements of $X$ are contained in an implicative-Boolean subalgebra of $X$.
\end{theorem}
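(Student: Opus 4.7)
The plan is to prove the two directions separately. The reverse direction follows quickly from the orthogonality-based characterization already established, while the forward direction requires a small explicit construction and is where the work lies.

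For the reverse direction, I would invoke Proposition \ref{ooml-40}, which says $X$ is an i-OML if and only if $x\perp y$ implies $x\wedge_Q y^*=x$. Given orthogonal $x,y\in X$, the hypothesis supplies an i-Boolean subalgebra $B\subseteq X$ with $x,y\in B$. By Proposition \ref{cioml-90-20}, $B$ is an i-OML, so Proposition \ref{ooml-40} applied inside $B$ yields $x\wedge_Q y^*=x$ there. Because $B$ is a subalgebra, the operations $\ra$ and $^*$ (hence $\wedge_Q$) agree with those of $X$, so the equality transfers to $X$. A second application of Proposition \ref{ooml-40} finishes the direction.

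For the forward direction, assume $X$ is an i-OML and $x\perp y$. By Remark \ref{cioml-60-05}, $x\mathcal{C}y$, and by Lemma \ref{cioml-50} the four elements $x,x^*,y,y^*$ commute pairwise. I would then form the finite set
\[
B=\{0,1,x,x^*,y,y^*,\; x\vee_Q y,\; x^*\wedge_Q y^*\}
\]
and show it is the desired subalgebra. First, closure under $^*$ is immediate. For closure under $\ra$, the orthogonality $x\le_L y^*$, $y\le_L x^*$ forces $x\ra y=x^*$, $y\ra x=y^*$, $x\ra y^*=1=y\ra x^*$, and the commuting structure then yields $x^*\ra y=x\vee_Q y$, $(x\vee_Q y)\ra x=y^*$, and similar relations, all staying inside $B$. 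Second, I would extend pairwise commutativity from $\{x,x^*,y,y^*\}$ to the constructed elements $x\vee_Q y$ and $x^*\wedge_Q y^*$, using Corollary \ref{cioml-60-20} together with the Sasaki-projection identities of Propositions \ref{sioml-30} and \ref{sioml-40}. Once every pair of elements of $B$ commutes, $B$ (which is an i-OML as a subalgebra of an i-OML) satisfies $\mathcal{CC}(B)=B$, and Corollary \ref{cioml-130-10} applied inside $B$ gives that $B$ is an i-Boolean subalgebra of $X$ containing $x$ and $y$.

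The main obstacle is the bookkeeping in the forward direction: one must verify closure of $B$ under $\ra$ and propagate the commutativity through the join $x\vee_Q y$ and meet $x^*\wedge_Q y^*$. The orthogonality hypothesis is precisely what prevents genuinely new elements from arising (collapsing would-be implications into members of $B$), and Lemma \ref{cioml-50} together with Corollary \ref{cioml-60-20} provides the algebraic leverage needed to push commutativity through the lattice operations. Everything else in the argument is either an immediate invocation of an earlier lemma or a direct translation along the subalgebra embedding.
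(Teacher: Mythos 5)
Your proposal is correct and, in the forward direction, essentially identical to the paper's proof: the paper also writes down the explicit eight-element set $Y=\{0,x,y,x^*\ra y,x^*,y^*,(x^*\ra y)^*,1\}$ (which coincides with your $B$, since $x\perp y$ gives $x\ra y=x^*$ and hence $x\vee_Q y=(x\ra y)\ra y=x^*\ra y$ and $x^*\wedge_Q y^*=(x^*\ra y)^*$), exhibits the implication table to check closure, and then uses Lemma \ref{cioml-90-15} (i.e.\ the divisibility form of pairwise commutativity, equivalent to your route through Lemma \ref{cioml-50} and Corollary \ref{cioml-130-10}) to conclude the subalgebra is i-Boolean. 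In the converse direction you take a slightly different path: you transfer the identity $x\wedge_Q y^*=x$ from the containing i-Boolean subalgebra to $X$ and invoke Proposition \ref{ooml-40}, whereas the paper starts from $x\le_L y$, notes $x\perp y^*$, passes to the subalgebra generated by $x$ and $y$ inside the given i-Boolean subalgebra, and concludes $y\vee_Q x=y$ via Theorem \ref{dioml-05-50}. Both arguments rest on the same two facts --- that i-Boolean algebras are i-OMLs (Proposition \ref{cioml-90-20}) and that the relevant conditions are term-definable, hence transfer between a subalgebra and the ambient algebra --- but your version is marginally cleaner in that it avoids the generated-subalgebra step and the implicit appeal to the fact that subalgebras of i-Boolean algebras are i-Boolean. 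No gaps; the only remaining work in your write-up would be the explicit closure table for $\ra$, which the paper supplies verbatim.
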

\begin{proof} 
Let $X$ be an i-OML and let $x,y\in X$ such that $x\perp y$. 
It follows that $y\perp x$; therefore, $x^*=x\ra y$ and $y^*=y\ra x$. 
Consider the subset $Y=\{0,x,y,x^*\ra y,x^*,y^*,(x^*\ra y)^*,1\}\subseteq X$. Using conditions $(impl)$ and $(pi)$, and Lemma \ref{ooml-30}, we have the following table: 
\[
\begin{array}{c|cccccccc}
\ra          & 0            & x        & y        & x^*\ra y & x^* & y^*          & (x^*\ra y)^* & 1 \\ \hline
0            & 1            & 1        & 1        & 1        & 1   & 1            & 1            & 1 \\
x            & x^*          & 1        & x^*      & 1        & x^* & 1            & x^*          & 1 \\
y            & y^*          & y^*      & 1        & 1        & 1   & y^*          & y^*          & 1 \\
x^*\ra y     & (x^*\ra y)^* & y^*      & x^*      & 1        & x^* & y^*          & (x^*\ra y)^* & 1 \\
x^*          & x            & x        & x^*\ra y & x^*\ra y & 1   & y^*          & y^*          & 1 \\
y^*          & y            & x^*\ra y & y        & x^*\ra y & x^* & 1            & x^*          & 1 \\
(x^*\ra y)^* & x^*\ra y     & x^*\ra y & x^*\ra y & x^*\ra y & 1   & 1            & 1            & 1 \\
1            & 0            & x        & y        & x^*\ra y & x^* & y^*          & (x^*\ra y)^* & 1 
\end{array}
.
\]

Using Lemma \ref{cioml-90-15}, one can easily verify that any two elements of $Y$ satisfy condition $(Idiv)$; 
hence, $(Y,\ra,^*,1)$ is an i-Boolean subalgebra of $X$ containing $x$ and $y$. 
To prove the converse, we use an idea from \cite{Paseka2}. 
Let $X$ be an i-OL and let $x,y\in X$ such that $x\le_L y$. 
It follows that $x\perp y^*$, and by hypothesis, $x$ and $y^*$ are contained in an i-Boolean subalgebra 
of $X$. Thus, the subalgebra $Y$ of $X$ generated by $x$ and $y$ is i-Boolean; hence, 
by Proposition \ref{cioml-90-20}, $Y$ is an i-OML. 
It follows that, by Theorem \ref{dioml-05-50}, $y\vee_Q x=y$. Since $x, y\in X$, by the same theorem, 
$X$ is an i-OML.
\end{proof}

\begin{example} \label{cioml-140-10} 
Consider the i-OL $(X,\ra,^*,1)$ from Example \ref{dioml-60}, with $X=\{0,a,b,c,d,1\}$. \\
$(1)$ Since $a^*=a\ra d$, we have $a\perp d$. 
If $Y\subseteq X$ is the subalgebra of $X$ generated by $a$ and $b$, then $Y$ also contains the element $d^*=b$.
Since $b\ra (b\ra a)^*=d\neq c=b\ra a^*$, condition $(Idiv)$ is not satisfied for the pair $(b, a)$. 
Hence, there exist orthogonal elements in $X$ which are not contained in any i-Boolean subalgebra of $X$.  
This result confirms that the hypothesis of Theorem \ref{cioml-140}, namely that $X$ must be an i-OML, is necessary. \\
$(2)$ It is easy to verify that $a\ra (a\ra b)^*=c=a\ra b^*$; hence, $a\mathcal{D} b$.   
In view of $(1)$, $a\mathcal{D} b$ does not imply $b\mathcal{D} a$; this confirms 
Corollary \ref{cioml-90-10}. \\
$(3)$ Since $b\ra a=1$ and $(b\ra a^*)^*=a\neq b$, we have $b\le a$ and $b\nleq_L a$, which is in accordance with 
Theorem \ref{cioml-100-20}. 
\end{example}

\begin{theorem} \label{cioml-150} 
$(\mathcal{SP}(\mathcal{CC}(X)),\circ,\varphi_1)$ is an Abelian monoid. 
\end{theorem}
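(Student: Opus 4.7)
The plan is to reduce the claim to the machinery already developed in Sections 4 and 5, so that no new calculations are required. By Theorem \ref{cioml-130}, $\mathcal{CC}(X)$ is an implicative-Boolean subalgebra of $X$, hence by Proposition \ref{cioml-90-20} it is itself an implicative-orthomodular lattice. Being closed under $\ra$ and $^*$, it is also closed under $\wedge_Q$, which is definable from these operations. Consequently, for every $a\in \mathcal{CC}(X)$, the Sasaki projection $\varphi_a:x\mapsto x\wedge_Q a$ restricts to a well-defined map $\mathcal{CC}(X)\to \mathcal{CC}(X)$, so $\mathcal{SP}(\mathcal{CC}(X))$ consists of such restrictions.

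Next, I would apply Corollary \ref{cioml-130-10} to the i-Boolean algebra $\mathcal{CC}(X)$: every pair of its elements satisfies the relation $\mathcal{C}$. Hence for any $a,b\in \mathcal{CC}(X)$ we have $a\,\mathcal{C}\,b$, and Theorem \ref{cioml-60-40} yields
$$\varphi_a\circ\varphi_b \;=\; \varphi_b\circ\varphi_a \;=\; \varphi_{a\wedge_Q b}.$$
This single identity delivers three of the four monoid axioms at once. Closure of $\mathcal{SP}(\mathcal{CC}(X))$ under $\circ$ follows because $a\wedge_Q b\in \mathcal{CC}(X)$ (a subalgebra is closed under $\wedge_Q$); commutativity is the equality $\varphi_a\circ\varphi_b=\varphi_b\circ\varphi_a$ just displayed; and associativity is automatic from associativity of function composition (no need to appeal to associativity of $\wedge_Q$ directly, although it also follows from the displayed formula).

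Finally, for the neutral element I would verify that $\varphi_1$ is the identity of the monoid. By Proposition \ref{sioml-20}(1), $\varphi_1 x=x$ for all $x\in \mathcal{CC}(X)$, so $\varphi_1\circ\varphi_a=\varphi_a\circ\varphi_1=\varphi_a$ for every $a\in\mathcal{CC}(X)$. Together with closure, commutativity, and associativity, this shows that $(\mathcal{SP}(\mathcal{CC}(X)),\circ,\varphi_1)$ is an Abelian monoid.

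There is essentially no hard step here: the whole argument rests on the fact that Theorem \ref{cioml-60-40} converts the mutual commutativity of all elements of $\mathcal{CC}(X)$ (given by Corollary \ref{cioml-130-10}) into commutativity of the associated Sasaki projections under composition. The only point that deserves explicit mention is the closure of $\mathcal{CC}(X)$ under $\wedge_Q$, which is where one verifies that $\varphi_{a\wedge_Q b}$ really lies in $\mathcal{SP}(\mathcal{CC}(X))$.
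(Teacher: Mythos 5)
Your proof is correct and follows essentially the same route as the paper's: both arguments reduce everything to the identity $\varphi_a\circ\varphi_b=\varphi_b\circ\varphi_a=\varphi_{a\wedge_Q b}$ from Theorem \ref{cioml-60-40}, applied using the fact that any two elements of $\mathcal{CC}(X)$ commute, with associativity coming from composition of functions and $\varphi_1$ serving as the identity. Your extra care about closure of $\mathcal{CC}(X)$ under $\wedge_Q$ and the restriction of $\varphi_a$ to the center is a reasonable elaboration of a point the paper leaves implicit (and note that $a\,\mathcal{C}\,b$ for $a,b\in\mathcal{CC}(X)$ follows directly from the definition of the center, without needing Corollary \ref{cioml-130-10}).
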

\begin{proof} 
It is known that the composition of functions from a set into itself is associative. 
Since $a\mathcal{C} b$ for any $a,b\in \mathcal{CC}(X)$, then by Theorem \ref{cioml-60-40}, 
$\varphi_a\varphi_b=\varphi_b\varphi_a=\varphi_{a\wedge_Q b}$, that is, 
$\varphi_a\varphi_b\in \mathcal{SP}(\mathcal{CC}(X))$, and $\varphi_a$ commutes with $\varphi_b$. 
Moreover, $\varphi_1\in \mathcal{SP}(\mathcal{CC}(X))$, and $\varphi_a\varphi_1=\varphi_1\varphi_a=\varphi_a$, 
for any $a\in X$; thus, $\varphi_1$ is the identity element. 
Hence, $(\mathcal{SP}(\mathcal{CC}(X)),\circ,\varphi_1)$ is an Abelian monoid.  
\end{proof}

$\vspace*{1mm}$

\section{Sasaki set of projections on implicative-ortholattices} 

P.D. Finch proved in \cite{Finch} that an orthocomplemented poset is an orthomodular lattice if and only if it 
admits a special set of order-preserving maps, which he called projections. 
D.J. Foulis has shown that an orthomodular lattice $X$ can be represented by its Baer $^*$-semigroup, in which the 
projections are exactly the Sasaki projections on $X$
(a Baer $^*$-semigroup is an involution semigroup in which the right annihilator of each element is a principal right ideal generated by a projection. For details on this subject, the reader is referred to \cite{Foulis1, Foulis2, Eastdown}). \\
Similarly to \cite{Finch}, in the case of orthocomplemented posets, we introduce the Sasaki sets of projections  
and the full Sasaki set of projections on implicative-ortholattices, and we investigate their properties. 
As a main result, we prove that an implicative-ortholattice is an implicative-orthomodular lattice if
and only if it admits a full Sasaki set of projections.
This result will be used in the study of Sasaki spaces. 

\begin{definition} \label{ssioml-10} 
\emph{
Let $X$ be an implicative-ortholattice. A set $S$ of maps $X\longrightarrow X$ is said to be a 
\emph{Sasaki set of projections} on $X$ if it satisfies the following conditions: \\
$(SS_1)$ for each $\varphi\in S$, $x\le_L y$ implies $\varphi x\le_L \varphi y$ for all $x, y\in X;$ \\
$(SS_2)$ if $\varphi, \psi\in S$, $\varphi 1\le_L \psi 1$ implies $\varphi\psi=\varphi;$ \\
$(SS_3)$ for each $\varphi\in S$, $\varphi\varphi^* x\le_L x^*$ for all $x\in X$, \\
where $\varphi\psi=\varphi\circ \psi$ and $\varphi^*x=(\varphi x)^*$. 
}
\end{definition}

\begin{example} \label{ssioml-20} Let $X$ be an i-OL. 
Define $\varphi_0, \varphi_1:X\longrightarrow X$ by $\varphi_0 x=0$ and $\varphi_1 x=x$ for all $x\in X$. 
The set $S=\{\varphi_0, \varphi_1\}$ is a Sasaki set of projections on $X$. 
\end{example}

\begin{proposition} \label{ssioml-30} Let $X$ be an implicative-ortholattice and let $S$ be a Sasaki set 
of projections on $X$. The following hold for all $\varphi, \psi\in S$: \\
$(1)$ $\varphi 1\le_L \psi 1$ implies $\varphi\psi =\varphi=\psi\varphi;$ \\
$(2)$ $\varphi 1= \psi 1$ implies $\varphi=\psi;$ \\
$(3)$ $\varphi^2=\varphi$, where $\varphi^2=\varphi\varphi;$ \\
$(4)$ $\varphi 1\le_L \psi 1$ implies $\psi\varphi 1=\varphi 1;$ \\
$(5)$ $\varphi x=0$ iff $x\le_L \varphi^* 1$ for $x\in X;$ \\
$(6)$ $\varphi x\perp \varphi y$ implies $x\perp \varphi y$ for $x, y\in X;$ \\
$(7)$ $\varphi x\perp y$ iff $x\perp \varphi y$ for $x, y\in X$.   
\end{proposition}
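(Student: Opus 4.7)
The plan is to prove the seven items in a dependency-friendly order, establishing (3) and (5) first, then the adjointness properties (7) and (6), tackling the hard half of (1), and deriving (2) and (4) as easy consequences.

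First, since $\le_L$ is reflexive on any implicative-ortholattice (Remarks~\ref{iol-30-05}(2)), applying $(SS_2)$ with $\psi := \varphi$ immediately gives $\varphi^2 = \varphi$, which is (3). For (5), the ``if'' direction follows from $(SS_1)$ and $(SS_3)$: if $x \le_L \varphi^* 1 = (\varphi 1)^*$, monotonicity yields $\varphi x \le_L \varphi\varphi^* 1$, and $(SS_3)$ at $x:=1$ gives $\varphi\varphi^* 1 \le_L 1^* = 0$, so $\varphi x = 0$. Conversely, if $\varphi x = 0$, then $\varphi^* x = 1$, hence $\varphi\varphi^* x = \varphi 1$; applying $(SS_3)$ and Lemma~\ref{iol-30-20}(1) yields $\varphi 1 \le_L x^*$, i.e.~$x \le_L \varphi^* 1$.

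Next I would establish (7). Starting from $\varphi x \perp y$, Lemma~\ref{ooml-20}(7) and Lemma~\ref{iol-30-20}(1) rewrite it as $y \le_L \varphi^* x$; monotonicity $(SS_1)$ followed by $(SS_3)$ then gives $\varphi y \le_L \varphi\varphi^* x \le_L x^*$, so $x \perp \varphi y$. The reverse direction is the same argument with the roles of $x$ and $y$ exchanged. Property (6) is immediate from (7) via (3): $\varphi x \perp \varphi y$ yields $x \perp \varphi(\varphi y) = \varphi y$.

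The central difficulty will be the second equality of (1), namely $\varphi 1 \le_L \psi 1 \Longrightarrow \psi\varphi = \varphi$; the first equality is literally $(SS_2)$. My strategy is to reduce this to the auxiliary claim that any $y \in X$ with $y \le_L \psi 1$ is a fixed point of $\psi$. Granting the claim, $(SS_1)$ combined with the hypothesis yields $\varphi x \le_L \varphi 1 \le_L \psi 1$ for every $x$, whence $\psi\varphi x = \varphi x$. The auxiliary claim itself is the delicate point: one needs both $\psi y \le_L y$ and $y \le_L \psi y$. I expect $\psi y \le_L y$ to come from $(SS_3)$ applied to $\psi$ at $y^*$ combined with the adjointness (7) and idempotence (3); the opposite inequality should follow from a parallel substitution together with the hypothesis $y \le_L \psi 1$ reformulated as $(\psi 1)^* \le_L y^*$ via Lemma~\ref{iol-30-20}(1). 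This is the step where the bulk of the technical work concentrates, and pulling it off will hinge on a careful juxtaposition of $(SS_1)$, $(SS_3)$, (3), and (7).

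With (1) established, (2) and (4) follow at once: $\varphi 1 = \psi 1$ supplies both inequalities required in the hypothesis of (1), so $\varphi = \psi\varphi = \psi$, which is (2); and (4) is (1) evaluated at the point $1$.
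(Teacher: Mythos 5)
Your treatment of parts $(2)$--$(7)$ is sound: $(3)$ via $(SS_2)$ with $\psi:=\varphi$ and reflexivity of $\le_L$, $(5)$ and $(7)$ essentially as in the paper, $(6)$ as a clean corollary of $(7)$ and $(3)$, and $(2)$, $(4)$ as consequences of $(1)$. The genuine problem is the second equality of $(1)$. Your auxiliary claim --- that every $y\in X$ with $y\le_L \psi 1$ is a fixed point of $\psi$ --- is false for a general Sasaki set of projections on an implicative-ortholattice, so the reduction collapses. Concretely, take $X=\{0,a,b,c,d,1\}$ to be the benzene ring of Example \ref{dioml-60} (so $a\le_L b$, $a^*=c$, $b^*=d$) and define $\psi$ by $\psi 0=\psi d=0$ and $\psi a=\psi b=\psi c=\psi 1=b$. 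One checks directly that $\psi$ is monotone for $\le_L$, idempotent, and satisfies $\psi(\psi x)^*\le_L x^*$ for every $x$, so $S=\{\psi\}$ is a Sasaki set of projections. Yet $a\le_L b=\psi 1$ while $\psi a=b\ne a$; in fact $\psi a\not\le_L a$, so the half of your claim that you assert unconditionally (namely $\psi y\le_L y$) already fails. This is consistent with the general theory: for an actual Sasaki projection, $\varphi_a y=y\wedge_Q a$, and requiring $y\wedge_Q a\le_L y$ for all $y,a$ forces the algebra to be implicative-Boolean by Theorem \ref{cioml-100-10}.

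The fixed-point property $\psi y=y$ for $y\le_L\psi 1$ does hold for the Sasaki projections on an implicative-orthomodular lattice (Proposition \ref{sioml-40}$(1)$), but it is not a consequence of $(SS_1)$--$(SS_3)$ for $\psi$ alone, and Proposition \ref{ssioml-30} is stated for an arbitrary implicative-ortholattice. Any correct proof of $\psi\varphi=\varphi$ must exploit the interplay between the two maps, i.e.\ the identity $\varphi\psi=\varphi$ supplied by $(SS_2)$, applied to auxiliary arguments rather than only at $1$. That is what the paper's proof does: it establishes $\varphi x\le_L\psi\varphi x$ by computing $\varphi(\psi\varphi x)^*=\varphi\psi(\psi\varphi x)^*\le_L\varphi(\varphi x)^*\le_L x^*$ and feeding this back through $(SS_1)$ and $(SS_3)$ with the substitution $y=(\psi\varphi x)^*$, and symmetrically obtains $\psi\varphi x\le_L\varphi x$ starting from $(\varphi x)^*=(\varphi\psi x)^*$. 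You should replace the auxiliary claim by an argument of this type; the rest of your proposal can stand as written.
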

\begin{proof}
$(1)$ By $(SS_3)$, $\psi(\psi\varphi x)^*\le_L (\varphi x)^*$ and 
$\varphi\psi(\psi\varphi x)^*\le_L \varphi(\varphi x)^*\le_L x^*$. 
From $(SS_2)$, $\varphi 1\le_L \psi 1$ implies $\varphi\psi=\varphi$; therefore,   
$\varphi(\psi\varphi x)^*=\varphi\psi(\psi\varphi x)^*\le_L x^*$.  
It follows that $x\le_L (\varphi(\psi\varphi x)^*)^*$ for all $x\in X$; hence, 
$\varphi x\le_L \varphi(\varphi(\psi\varphi x)^*)^*$. 
Taking $y=(\psi\varphi x)^*$, using $(SS_3)$ we have 
$\varphi x\le_L \varphi(\varphi(\psi\varphi x)^*)^*=\varphi(\varphi y)^*\le_L y^*=
((\psi\varphi x)^*)^*=\psi\varphi x$. Thus, $\varphi\le_L \psi\varphi$. \\
On the other hand, from $\varphi\psi=\varphi$, we get: 
$(\varphi x)^*=(\varphi\psi x)^*$, and applying $(SS_3)$, we have: 
$\psi\varphi(\varphi x)^*=\psi\varphi(\varphi\psi x)^*\le_L \psi(\psi x)^*
\le_L x^*$. 
Replacing $x$ by $(\varphi x)^*$, it follows that $\psi\varphi(\varphi(\varphi x)^*)^*\le_L \varphi x$.  
By $(SS_3)$, $\varphi(\varphi x)^*\le_L x^*$, hence, $x\le_L (\varphi(\varphi x)^*)^*$. 
Since $\psi\varphi$ is monotone, we get: 
$\psi\varphi x\le_L \psi\varphi(\varphi(\varphi x)^*)^*\le_L \varphi x$. 
Thus, $\psi\varphi\le_L \varphi $, and finally, $\psi\varphi=\varphi$. \\
$(2)$, $(3)$, $(4)$ follow from $(1)$. \\
$(5)$ If $\varphi x=0$, by $(SS_3)$, $\varphi(\varphi x)^*\le_L x^*$; hence, $\varphi(0^*)\le_L x^*$. 
It follows that $\varphi 1\le_L x^*$, that is, $x\le_L (\varphi 1)^*$. 
Conversely, if $x\le_L (\varphi 1)^*$, then by $(SS_1)$ and $(SS_3)$, we get 
$\varphi x\le_L \varphi(\varphi 1)^*\le_L 1^*=0$; hence, $\varphi x=0$. \\
$(6)$ If $\varphi x\perp \varphi y$, then $\varphi x\le_L \varphi^*y$, that is, $\varphi y\le_L \varphi^*x$. 
Using $(3)$ and $(SS_3)$, we obtain $\varphi\varphi y\le_L \varphi\varphi^*x\le_L x^*$; thus, $\varphi y\le_L x^*$. 
It follows that $\varphi y\perp x$, that is, $x\perp \varphi y$. \\
$(7)$ From $x\perp \varphi y$, we have $x\le_L \varphi^*y$. Hence, $\varphi x\le_L \varphi\varphi^*y\le_L y^*$, 
that is, $\varphi x\perp y$. 
Conversely, $\varphi x\perp y$ implies $\varphi x\le_L y^*$; thus, $y\le_L \varphi^*x$. 
It follows that $\varphi y\le_L \varphi\varphi^*x\le_L x^*$, that is, $\varphi y\perp x$. 
\end{proof}

\begin{proposition} \label{ssioml-60-20} 
Let $X$ be an implicative-ortholattice, let $S$ be a Sasaki set of projections on $X$, and 
let $\varphi\in S$. The following holds for all $x,y\in X$: \\
$\hspace*{5cm}$ $\varphi(x\ra y)=\varphi^* x^*\ra \varphi y$.  
\end{proposition}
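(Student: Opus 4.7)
The plan is to recast the identity in terms of the lattice join of $(X,\le_L)$ and then prove that every $\varphi\in S$ preserves binary joins. Recall from Remark \ref{qbe-10-10} that $u\vee^P v:=u^*\ra v$ is the join of the lattice order $\le_L$ on $X$. Hence $x\ra y=x^*\vee^P y$, and by involutivity $(\varphi x^*)^*\ra\varphi y=\varphi x^*\vee^P\varphi y$, so the identity $\varphi(x\ra y)=\varphi^*x^*\ra\varphi y$ reduces to $\varphi(x^*\vee^P y)=\varphi x^*\vee^P\varphi y$.

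The ``$\ge_L$'' inequality is easy: since $x^*,y\le_L x^*\vee^P y$, the monotonicity axiom $(SS_1)$ gives $\varphi x^*,\varphi y\le_L\varphi(x^*\vee^P y)$, and the universal property of the join then yields $\varphi x^*\vee^P\varphi y\le_L\varphi(x^*\vee^P y)$.

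For the reverse inequality, I would set $z:=\varphi x^*\vee^P\varphi y$ and convert between $\le_L$ and $\perp$ using Lemma \ref{ooml-20}(7) together with Proposition \ref{ssioml-30}(7). Concretely, $\varphi(x^*\vee^P y)\le_L z$ iff $\varphi(x^*\vee^P y)\perp z^*$ iff $(x^*\vee^P y)\perp\varphi z^*$ iff $x^*\vee^P y\le_L(\varphi z^*)^*$. Using that $\vee^P$ is the $\le_L$-join, this splits into $x^*\le_L(\varphi z^*)^*$ and $y\le_L(\varphi z^*)^*$; after applying Lemma \ref{iol-30-20}(1) and Lemma \ref{ooml-20}(7), and then Proposition \ref{ssioml-30}(7) once more, these become $z^*\perp\varphi x^*$ and $z^*\perp\varphi y$. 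Both hold since $\varphi x^*,\varphi y\le_L z$ forces $z^*\le_L(\varphi x^*)^*$ and $z^*\le_L(\varphi y)^*$ by Lemma \ref{iol-30-20}(1).

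The main obstacle I anticipate is bookkeeping: two applications of Proposition \ref{ssioml-30}(7) in opposite directions must be chained with the $\le_L$-to-$\perp$ translations of Lemma \ref{ooml-20}(7), and care is needed so that the correct element ends up on the correct side of $\perp$ at each step. Once this is organized, Proposition \ref{ssioml-30}(7) plays the role of an adjointness-type property that forces every $\varphi\in S$ to preserve binary joins, and the stated identity is then a purely formal consequence.
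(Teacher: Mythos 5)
Your proposal is correct and follows essentially the same route as the paper's proof: both directions are handled by the join characterization of $x\ra y=x^*\vee^P y$ (the easy inequality from $(SS_1)$ plus the least-upper-bound property, i.e.\ Lemma \ref{iol-30-20}$(10)$, and the hard inequality by transposing $\varphi$ across the orthocomplement twice). The only difference is packaging: you invoke the already-established adjointness $\varphi x\perp y\Leftrightarrow x\perp\varphi y$ of Proposition \ref{ssioml-30}$(7)$ together with Lemma \ref{ooml-20}$(7)$, whereas the paper re-derives exactly that transposition inline from $(SS_1)$ and $(SS_3)$ applied to $z=\varphi^*x^*\ra\varphi y$.
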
  
\begin{proof} 
Since $X$ is an i-OML, we have $\le_Q \iff \le_L$. 
Denote $z=\varphi^* x^*\ra \varphi y$. 
From $\varphi x^*\le_L \varphi^* x^*\ra \varphi y=z$, we get $z^*\le_L \varphi^* x^*$, and by $(SS_3)$, 
$\varphi z^*\le_L \varphi\varphi^* x^*\le_L x$. 
Hence, $x^*\le_L \varphi^* z^*$. 
Similarly, from $\varphi y\le_L \varphi^* x^*\ra \varphi y=z$, we have $y\le_L \varphi^* z^*$. 
By Lemma \ref{iol-30-20}$(10)$, $x\ra y\le_L \varphi^* z^*$, and by $(SS_3)$, 
$\varphi(x\ra y)\le_L \varphi\varphi^* z^* \le_L z=\varphi^* x^*\ra \varphi y$. 
Moreover, from $x^*,y\le_L x\ra y$, we have $\varphi x^*, \varphi y\le_L \varphi(x\ra y)$. 
Applying again Lemma \ref{iol-30-20}$(10)$, we get $\varphi^* x^*\ra \varphi y \le_L \varphi(x\ra y)$. 
We conclude that $\varphi(x\ra y)=\varphi^* x^*\ra \varphi y$. 
\end{proof}

Replacing $x$ by $x^*$, using the mutually transformations from Remark \ref{ioml-30-20}, we obtain 
$\varphi(x\vee y)=\varphi x\vee \varphi y$, that is, $\varphi$ preserves the join $\vee$. 

\begin{definition} \label{ssioml-40} 
\emph{
Let $X$ be an implicative-ortholattice. A Sasaki set of projections $S$ on $X$ is said to be \emph{full}, 
if for each $x\in X$, there exists $\varphi^x\in S$ such that $\varphi^x 1=x$. 
}
\end{definition}

\begin{remark} \label{ssioml-50} 
By Proposition \ref{ssioml-30}$(2)$, it follows that $\varphi^x$ from Definition \ref{ssioml-40} is unique. 
\end{remark}  

\begin{proposition} \label{ssioml-60} 
Let $X$ be an implicative-ortholattice admitting a full Sasaki set of projections $S$. 
Then, the following hold for all $x,y,z\in X$: \\
$(1)$ if $z\le_L x,y$, then $z\le \varphi^x(\varphi^x y^*)^*;$ \\
$(2)$ $\varphi^x(\varphi^x y^*)^*=(x\ra y^*)^*;$ \\
$(3)$ $\varphi^x x^*=0$. 
\end{proposition}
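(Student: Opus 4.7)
The plan is to dispose of (3) by a direct use of $(SS_3)$, to prove (1) by establishing the stronger inequality $z\le_L\varphi^x(\varphi^x y^*)^*$, and to deduce (2) by characterizing the right-hand side as the meet of $x$ and $y$ in the lattice order $\le_L$ of Remark \ref{qbe-10-10}.

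For (3), I apply $(SS_3)$ with $\varphi=\varphi^x$ at the argument $1$: since $\varphi^{x*}1 = (\varphi^x 1)^* = x^*$, the axiom gives $\varphi^x x^* = \varphi^x\varphi^{x*}1\le_L 1^* = 0$, so $\varphi^x x^* = 0$. For (1), suppose $z\le_L x$ and $z\le_L y$. Proposition \ref{ssioml-30}(4) applied to $\varphi^z 1 = z\le_L x = \varphi^x 1$ gives $\varphi^x z = z$. By Lemma \ref{ooml-20}(7), $z\le_L y$ is equivalent to $z\perp y^*$, and Proposition \ref{ssioml-30}(7) applied to $\varphi^x$ converts this into $z\perp\varphi^x y^*$, i.e.\ $z\le_L(\varphi^x y^*)^*$. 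Monotonicity $(SS_1)$ then yields $z = \varphi^x z\le_L\varphi^x(\varphi^x y^*)^*$, from which $z\le\varphi^x(\varphi^x y^*)^*$ follows via Proposition \ref{qbe-20}(6).

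For (2), let $c = \varphi^x(\varphi^x y^*)^*$. Monotonicity together with $\varphi^x 1 = x$ gives $c\le_L x$, while $(SS_3)$ applied to $y^*$ gives $c = \varphi^x\varphi^{x*}y^*\le_L y$. By Remark \ref{qbe-10-10}, the relation $\le_L$ is the partial order of a lattice whose meet is $x\wedge^P y = (x\ra y^*)^*$; hence $c\le_L(x\ra y^*)^*$. Conversely, Lemma \ref{iol-30-20}(4) gives $(x\ra y^*)^*\le_L x, y$, and the $\le_L$-strengthening of (1) obtained above, applied with $z:=(x\ra y^*)^*$, yields $(x\ra y^*)^*\le_L c$; therefore $c = (x\ra y^*)^*$. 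The main obstacle is this last step: since the axioms $(SS_1)$--$(SS_3)$ provide no direct computation of $\varphi^x(\varphi^x y^*)^*$, one must pin it down as the $\le_L$-meet of $x$ and $y$ by a sandwich argument, which is precisely why one has to upgrade the conclusion of (1) from $\le$ to $\le_L$.
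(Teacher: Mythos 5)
Your proof is correct and follows essentially the same strategy as the paper: part (1) reduces to showing $\varphi^x z=z$ and $z\le_L(\varphi^x y^*)^*$ and then invoking $(SS_1)$, part (2) is the same sandwich argument (upper bound via monotonicity, $(SS_3)$ and the $\le_L$-meet property of $(x\ra y^*)^*$; lower bound via Lemma \ref{iol-30-20}(4) and the $\le_L$-form of (1)), and part (3) is immediate. The only variation is the middle step of (1), where you obtain $z\le_L(\varphi^x y^*)^*$ from the adjointness property of Proposition \ref{ssioml-30}(7) combined with the dictionary $x\perp y\Leftrightarrow x\le_L y^*$, while the paper uses the annihilator characterization of Proposition \ref{ssioml-30}(5) twice; you are also right that (2) really needs the $\le_L$-strengthening of (1), which is exactly what the paper's proof of (1) establishes despite the weaker $\le$ in its statement.
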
  
\begin{proof} 
$(1)$ From $z\le_L x$, we have $\varphi^z 1\le_L \varphi^x 1$, and by Proposition \ref{ssioml-30}$(1)$, 
we get $\varphi^z\varphi^x=\varphi^z$. 
Since $y^*\le_L z^*=(\varphi^z 1)^*$, then by Proposition \ref{ssioml-30}$(5)$, we have $\varphi^z y^*=0$. 
It follows that  $\varphi^z\varphi^x(y^*)=\varphi^z(y^*)=0$, and applying again Proposition \ref{ssioml-30}$(5)$, 
we get $\varphi^x y^*\le_L (\varphi^z 1)^*=z^*$; hence, $z\le_L (\varphi^x y^*)^*$. 
From $\varphi^z 1=z\le_L x=\varphi^x 1$, by Proposition \ref{ssioml-30}$(4)$ we have 
$\varphi^x\varphi^z 1=\varphi^z 1$; thus, $\varphi^x z=z$. 
Finally, $z=\varphi^x z\le_L \varphi^x(\varphi^x y^*)^*$. \\
$(2)$ Since $(\varphi^x y^*)^*\le_L 1$, we have $\varphi^x(\varphi^x y^*)^*\le_L \varphi^x 1=x$. 
Moreover, by $(SS_3)$, $\varphi^x(\varphi^x y^*)^*\le_L (y^*)^*=y$. 
Applying Lemma \ref{qbe-20}$(8)$, we get $\varphi^x(\varphi^x y^*)^*\le_L (x\ra y^*)^*$. \\
On the other hand, from $(x\ra y^*)^*\le_L x, y$ (Lemma \ref{iol-30-20}$(4)$), using $(1)$, 
$(x\ra y^*)^*\le_L \varphi^x(\varphi^x y^*)^*$. 
We conclude that $\varphi^x(\varphi^x y^*)^*=(x\ra y^*)^*$. \\
$(3)$ Since $x^*\le_L x^*$ implies $x^*\le_L (\varphi^x 1)^*$, using Proposition \ref{ssioml-30}$(5)$, we get 
$\varphi^x x^*=0$. 
\end{proof}

The next result justifies using the term ``Sasaki set of projections" on an implicative-ortholattice $X$: 
for any $x\in X$, $\varphi^x$ is a Sasaki projection. 

\begin{proposition} \label{ssioml-60-10} 
Let $X$ be an implicative-ortholattice that admits a full Sasaki set of projections $S$. 
Then $\varphi^x y= y\wedge_Q x$ for all $x,y\in X$. 
\end{proposition}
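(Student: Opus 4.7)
The plan is to prove $\varphi^x y = y\wedge_Q x$ by a direct computation combining the action of $\varphi^x$ on implications (Proposition \ref{ssioml-60-20}) with the full-set identity $\varphi^x(\varphi^x y^*)^* = (x\ra y^*)^*$ from Proposition \ref{ssioml-60}$(2)$. Let $w := y\wedge_Q x$; using $y^*\ra x^* = x\ra y$ (Lemma \ref{qbe-10}$(6)$), one has $w = ((x\ra y)\ra x^*)^*$. A preliminary observation I will use twice is that $\varphi^x$ fixes every $u\le_L x$: by Proposition \ref{ssioml-30}$(4)$ applied to $\varphi^u$ and $\varphi^x$, $\varphi^u 1 = u \le_L x = \varphi^x 1$ forces $\varphi^x u = u$. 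Since $(x\ra y)^* \le_L x$ by Lemma \ref{iol-30-20}$(4)$, this yields $\varphi^x((x\ra y)^*) = (x\ra y)^*$.

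The first main step computes $\varphi^x(w^*) = \varphi^x((x\ra y)\ra x^*)$. Applying Proposition \ref{ssioml-60-20} with $a = x\ra y$ and $b = x^*$, and using $\varphi^x x^* = 0$ from Proposition \ref{ssioml-60}$(3)$, the right-hand side collapses to $\varphi^x((x\ra y)^*)$, which equals $(x\ra y)^*$ by the preliminary observation. Thus $\varphi^x(w^*) = (x\ra y)^*$.

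The second main step applies Proposition \ref{ssioml-60}$(2)$ with $y$ replaced by $w$, giving $\varphi^x(\varphi^x w^*)^* = (x\ra w^*)^*$. Substituting the value just obtained, $\varphi^x(x\ra y) = (x\ra w^*)^*$, and a further application of Proposition \ref{ssioml-60-20} (with $a = x$, $b = y$, and again $\varphi^x x^* = 0$) identifies the left-hand side as $\varphi^x y$. Hence $\varphi^x y = (x\ra w^*)^*$. Finally, using $(BE_4)$ together with $x\ra x^* = x^*$ from $(iG)$, one has $x\ra w^* = x\ra ((x\ra y)\ra x^*) = (x\ra y)\ra (x\ra x^*) = (x\ra y)\ra x^* = w^*$, so $(x\ra w^*)^* = w = y\wedge_Q x$.

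The principal obstacle is spotting the correct substitution in the second step: applying Proposition \ref{ssioml-60}$(2)$ with $y$ replaced by $w = y\wedge_Q x$ is what bridges the computed value of $\varphi^x(w^*)$ with the target expression for $\varphi^x y$. Once this substitution is in place, everything else reduces to routine manipulations in involutive BE algebras.
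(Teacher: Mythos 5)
Your proposal is correct and relies on the same essential ingredients as the paper's proof: Proposition~\ref{ssioml-60}$(2)$, the identity $\varphi^x x^*=0$ from Proposition~\ref{ssioml-60}$(3)$, and the implication formula of Proposition~\ref{ssioml-60-20}, all pivoting on the identification $\varphi^x(x\ra y)=\varphi^x y$. The only difference is cosmetic: the paper reaches $y\wedge_Q x=\varphi^x(x\ra y)$ by chaining Proposition~\ref{ssioml-60}$(2)$ with the idempotence of $\varphi^x$, whereas you first compute $\varphi^x(w^*)=(x\ra y)^*$ using the fixed-point property $\varphi^x u=u$ for $u\le_L x$ (via Proposition~\ref{ssioml-30}$(4)$) and then close the loop with a single application of Proposition~\ref{ssioml-60}$(2)$ and the identity $x\ra w^*=w^*$; every step checks out.
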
  
\begin{proof} 
For all $x,y\in X$, we have: \\
$\hspace*{3.00cm}$ $y\wedge_Q x=((y^*\ra x^*)\ra x^*)^*=(x\ra (y^*\ra x^*)^*)^*$ \\ 
$\hspace*{4.00cm}$ $=\varphi^x(\varphi^x(y^*\ra x^*)^*)^*$ (Prop. \ref{ssioml-60}$(2))$ \\ 
$\hspace*{4.00cm}$ $=\varphi^x(\varphi^x(x\ra y)^*)^*$ \\                             
$\hspace*{4.00cm}$ $=\varphi^x(\varphi^x\varphi^x (\varphi^x y)^*)^*$ (Prop. \ref{ssioml-60}$(2))$ \\
$\hspace*{4.00cm}$ $=\varphi^x(\varphi^x(\varphi^x y)^*)^*$ (Prop. \ref{ssioml-30}$(3))$ \\
$\hspace*{4.00cm}$ $=\varphi^x(x\ra y)$ (Prop. \ref{ssioml-60}$(2))$ \\
$\hspace*{4.00cm}$ $=\varphi^x(y^*\ra x^*)$ \\
$\hspace*{4.00cm}$ $=(\varphi^x y)^*\ra \varphi^x(x^*)$ (Prop. \ref{ssioml-60-20}) \\
$\hspace*{4.00cm}$ $=(\varphi^x y)^*\ra 0$ (Prop. \ref{ssioml-60}$(3))$ \\
$\hspace*{4.00cm}$ $=\varphi^x y$. 
\end{proof}

Based on Sasaki set of projections, we give a characterization theorem for implicative-orthomodular lattices. 
This result will be useful in the study of Sasaki orthogonality spaces. 

\begin{theorem} \label{ssioml-70} An implicative-ortholattice is an implicative-orthomodular lattice if and 
only if it admits a full Sasaki set of projections. 
\end{theorem}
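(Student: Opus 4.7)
The plan is to prove both implications separately, exploiting the characterizations of implicative-orthomodular lattices already established.

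For the forward direction, suppose $X$ is an i-OML. I would propose the canonical candidate $S := \mathcal{SP}(X) = \{\varphi_a \mid a \in X\}$ and verify that it is a full Sasaki set of projections. Fullness is immediate from Proposition~\ref{sioml-20}$(1)$, since $\varphi_a 1 = 1 \wedge_Q a = a$, so each $a \in X$ is realized by $\varphi^a := \varphi_a$. Axiom $(SS_1)$ is precisely the monotonicity stated in Proposition~\ref{sioml-20}$(5)$. For $(SS_2)$, note that $\varphi_a 1 \le_L \varphi_b 1$ unfolds to $a \le_L b$, and then Proposition~\ref{sioml-40}$(3)$ delivers $\varphi_a \varphi_b = \varphi_a$. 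Finally, $(SS_3)$ is Proposition~\ref{sioml-30}$(4)$.

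For the converse, suppose the i-OL $X$ admits a full Sasaki set $S$. I would aim at the criterion $\le_L\,\subseteq\,\le_Q$ provided by Theorem~\ref{dioml-05-50}. Let $x,y \in X$ with $x \le_L y$, and let $\varphi^x, \varphi^y \in S$ be the (unique, by Remark~\ref{ssioml-50}) projections with $\varphi^x 1 = x$ and $\varphi^y 1 = y$. Since $\varphi^x 1 \le_L \varphi^y 1$, Proposition~\ref{ssioml-30}$(1)$ yields $\varphi^y \varphi^x = \varphi^x$; evaluating at $1$ gives $\varphi^y x = x$. But Proposition~\ref{ssioml-60-10} identifies $\varphi^y x$ with $x \wedge_Q y$, so $x \wedge_Q y = x$, i.e., $x \le_Q y$. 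Hence $\le_L\,\subseteq\,\le_Q$, and Theorem~\ref{dioml-05-50} declares $X$ to be an i-OML.

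The only delicate point is to make sure that Propositions~\ref{ssioml-30} and~\ref{ssioml-60-10} are applied in their stated generality (for an arbitrary i-OL admitting a full Sasaki set), and not under the a priori assumption that $X$ is already an i-OML; inspection of their proofs shows this is indeed the case, so the reasoning is not circular. The main potential obstacle is making sure the implication $\varphi^x 1 \le_L \varphi^y 1 \Rightarrow \varphi^y \varphi^x = \varphi^x$ is genuinely available without orthomodularity, but this is exactly what axiom $(SS_2)$ (combined with the symmetry proved in Proposition~\ref{ssioml-30}$(1)$) guarantees. Everything else is straightforward bookkeeping based on the results already established in this section.
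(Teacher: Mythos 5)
Your proof is correct, and half of it coincides with the paper's. The direction ``i-OML $\Rightarrow$ full Sasaki set'' is exactly the paper's argument: take $S=\mathcal{SP}(X)$ and check the axioms (the paper verifies $(SS_2)$ via Proposition \ref{ioml-50-10}$(2)$ rather than Proposition \ref{sioml-40}$(3)$, but these encode the same identity $(x\wedge_Q b)\wedge_Q a=x\wedge_Q a$ for $a\le_L b$). For the converse you take a genuinely different route. The paper assumes $x\le y$ and $y\le_L x$, computes $\varphi^x y^*=(x\ra y)^*=0$ using Proposition \ref{ssioml-60}$(2)$ together with $\varphi^x y=y$, deduces $x\le_L y$ from Proposition \ref{ssioml-30}$(5)$, concludes $x=y$, and invokes criterion $(c)$ of Theorem \ref{dioml-05-60}. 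You instead aim directly at $\le_L\,\subseteq\,\le_Q$ and Theorem \ref{dioml-05-50}, using the full identification $\varphi^y x=x\wedge_Q y$ from Proposition \ref{ssioml-60-10}. Your version is shorter on the page but leans on heavier machinery: Proposition \ref{ssioml-60-10} depends on Proposition \ref{ssioml-60-20}, whose written proof opens with the phrase ``Since $X$ is an i-OML, we have $\le_Q\Leftrightarrow\le_L$,'' so the non-circularity of your argument hinges on the observation --- which does check out on inspection --- that the remainder of that proof never actually uses orthomodularity, only $(SS_1)$, $(SS_3)$ and properties of $\le_L$ valid in any implicative-ortholattice. You flagged exactly this point, and with it verified your argument is complete; the paper's route avoids the issue altogether by getting by with the lighter Propositions \ref{ssioml-60}$(2)$ and \ref{ssioml-30}$(1)$,$(5)$.
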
  
\begin{proof}
Let $(X,\ra,^*,1)$ be an i-OL and let $S$ be a full Sasaki set of projections on $X$. 
Consider $x,y\in X$ such that $x\le y$ and $y\le_L x$. Then there exists $\varphi^x, \varphi^y\in S$ such that $\varphi^x 1=x$ and $\varphi^y 1=y$. 
Applying Proposition \ref{ssioml-60}$(2)$, it follows that $\varphi^x(\varphi^x y)^*=(x\ra y)^*$. 
From $y\le_L x$, we have $\varphi^y 1\le_L \varphi^x 1$, and by Proposition \ref{ssioml-30}$(1)$, 
$\varphi^x\varphi^y 1=\varphi^y 1$, that is, $\varphi^x y=y$. 
Then $(\varphi^x y)^*=y^*$, and $\varphi^x(\varphi^x y)^*=\varphi^x y^*$; 
thus, $\varphi^x y^*=(x\ra y)^*=0$, since $x\le y$. 
By Proposition \ref{ssioml-30}$(5)$, we get $y^*\le_L (\varphi^x 1)^*=x^*$, that is, $x\le_L y$. 
Thus, $x=y$, and according to Theorem \ref{dioml-05-60}, $X$ is an i-OML. \\
Conversely, if $X$ is an i-OML, let $S=\mathcal{SP}(X)=\{\varphi_a\mid a\in X\}$, the set of all Sasaki 
projections on $X$. 
By Propositions \ref{sioml-20}$(5)$, \ref{sioml-30}$(4)$, it follows that $(SS_1)$ and $(SS_3)$ hold for any $\varphi_a\in S$. 
If $\varphi_a, \varphi_b \in S$ such that $\varphi_a 1\le_L \varphi_b 1$, then $a\le_L b$, and applying 
Proposition \ref{ioml-50-10}$(2)$, we have $(x\wedge_Q b)\wedge_Q a=x\wedge_Q a$ for all $x\in X$. 
Hence, $\varphi_a\varphi_b=\varphi_a$, and condition $(SS_2)$ is verified. 
It follows that $S$ is a Sasaki set of projections on $X$. 
For any $x\in X$, taking $\varphi^x:=\varphi_x\in S$, we have $\varphi^x 1=\varphi_x 1=x$; hence, $S$ is a full 
Sasaki set of projections on $X$. 
\end{proof}

\begin{example} \label{ssioml-80}
Let $(X,\ra,^*,1)$ be the i-OL from Example \ref{dioml-60}, where $X=\{0, a, b, c, d, 1\}$. 
We show that there is no full set of projections on $X$. 
Suppose, for contradiction, that $X$ admits a full set of projections $S$. 
It follows that there exists $\varphi^a, \varphi^b\in S$ such that $\varphi^a1=a$ and $\varphi^b1=b$. 
Since $(a\ra b^*)^*=(a\ra d)^*=c^*=a$, we have $a\le_L b$. 
Hence, $\varphi^a1\le_L \varphi^b1$, and, by Proposition \ref{ssioml-30}$(1)$, we obtain 
$\varphi^b\varphi^a1=\varphi^a1$, that is, $\varphi^ba=a$. 
Applying Proposition \ref{ssioml-60}$(2)$, we have $\varphi^b(\varphi^ba)^*=(b\ra a)^*=1^*=0$. 
Thus, $\varphi^ba^*=0$, and, by Proposition \ref{ssioml-30}$(5)$, $a^*\le_L (\varphi^b1)^*$, that is, $a^*\le_L b^*$.  
It follows that $b\le_L a$, and together with $a\le_L b$, we obtain $a=b$, a contradiction. 
Hence, there is no full set of projections on $X$. 
\end{example}

\begin{example} \label{ssioml-90}
Consider the set $X=\{0,a,b,c,d,1\}$, and the operations $\ra$, $\wedge_Q$ defined below. 
\[
\begin{picture}(50,-70)(0,40)
\put(37,11){\line(-5,4){32}} 
\put(18,37){\circle*{3}}
\put(-5,35){$a$}

\put(37,11){\line(-3,4){20}} 
\put(6,37){\circle*{3}}
\put(23,35){$b$}

\put(37,11){\line(2,5){10}} 
\put(47,37){\circle*{3}}
\put(38,35){$c$}

\put(39,11){\line(3,4){20}}
\put(37,11){\circle*{3}}
\put(34,0){$0$}

\put(18,37){\line(3,4){17}}
\put(34,59){\circle*{3}}
\put(32,64){$1$}

\put(47,37){\line(-3,5){12}}  

\put(6,37){\line(5,4){28}}

\put(60,37){\line(-6,5){25}}  
\put(59,37){\circle*{3}}
\put(65,35){$d$}
\end{picture}
\hspace*{2cm}
\begin{array}{c|cccccc}
\rightarrow & 0 & a & b & c & d & 1 \\ \hline
0 & 1 & 1 & 1 & 1 & 1 & 1 \\
a & b & 1 & b & 1 & 1 & 1 \\
b & a & a & 1 & 1 & 1 & 1 \\
c & d & 1 & 1 & 1 & d & 1 \\
d & c & 1 & 1 & c & 1 & 1 \\
1 & 0 & a & b & c & d & 1
\end{array}
\hspace{10mm}
\begin{array}{c|ccccccc}
\wedge_Q & 0 & a & b & c & d & 1 \\ \hline
0    & 0 & 0 & 0 & 0 & 0 & 0 \\ 
a    & 0 & a & 0 & c & d & a \\ 
b    & 0 & 0 & b & c & d & b \\ 
c    & 0 & a & b & c & 0 & c \\
d    & 0 & a & b & 0 & d & d \\
1    & 0 & a & b & c & d & 1 
\end{array}.
\]

Then $(X,\ra,^*,1)$, where $x^*=x\ra 0$ is an i-OML. 
Let the maps $\varphi_0, \varphi_a, \varphi_b, \varphi_c, \varphi_d, \varphi_1:X\longrightarrow X$ be defined 
as follows. 

\[
\begin{array}{c|ccccccc}
x          & 0 & a & b & c & d & 1 \\ \hline
\varphi_0x=x\wedge_Q 0 & 0 & 0 & 0 & 0 & 0 & 0 \\ 
\varphi_ax=x\wedge_Q a & 0 & a & 0 & a & a & a \\ 
\varphi_bx=x\wedge_Q b & 0 & 0 & b & b & b & b \\ 
\varphi_cx=x\wedge_Q c & 0 & c & c & c & 0 & c \\
\varphi_dx=x\wedge_Q d & 0 & d & d & 0 & d & d \\
\varphi_1x=x\wedge_Q 1 & 0 & a & b & c & d & 1 
\end{array}.
\]

From the proof of Theorem \ref{ssioml-70}, 
$S=\mathcal{SP}(X)=\{\varphi_0, \varphi_a, \varphi_b, \varphi_c, \varphi_d, \varphi_1\}$ is a full set of 
projections on $X$. 
\end{example}

$\vspace*{1mm}$

\section{Connection with Dacey and Sasaki spaces} 

The logic of an orthogonality space is an ortholattice (namely, the ortholattice of its orthoclosed subsets), 
but it is not necessarily orthomodular. 
J.R. Dacey introduced orthogonality spaces in which every orthoclosed set is generated, via double orthogonal closure, by any maximal orthogonal subset it contains \cite{Dacey}. This condition guarantees that the logic of an 
orthogonality space is orthomodular, and the orthogonality spaces satisfying Dacey's condition are called Dacey spaces. 
In this section, we investigate these notions in the case of implicative-ortholattices, and we define and characterize the Dacey and Sasaki spaces. 

Given an implicative-ortholattice $(X,\ra,^*,1)$, we defined in Section 3 its associated orthogonality space 
$(X^{\prime},\perp)$, where $X^{\prime}=X\setminus \{0\}$ and $x\perp y$ if and only if $x^*=x\ra y$. 

\begin{definition} \label{dacey-30} \emph{
Let $(X,\ra,^*,1)$ be an implicative-ortholattice. The orthogonality space $(X^{\prime},\perp)$ is a 
\emph{Dacey space} if $\mathcal{CL}(X^{\prime},\perp)$ is an implicative-orthomodular lattice. 
}
\end{definition}

\begin{proposition} \label{dacey-40} Let $(X,\ra,^*,1)$ be an implicative-ortholattice and let 
$(X^{\prime},\perp)$ be the associated orthogonality space. The following are equivalent: \\ 
$(a)$ $(X^{\prime},\perp)$ is a Dacey space; \\
$(b)$ Every two orthogonal elements of $\mathcal{CL}(X^{\prime},\perp)$ are contained in an implicative-Boolean subalgebra of $\mathcal{CL}(X^{\prime},\perp)$.  
\end{proposition}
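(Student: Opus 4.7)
The plan is to recognize that this equivalence follows essentially by combining three ingredients already established earlier in the paper, so the work consists mainly in assembling them correctly and checking that the notion of orthogonality in $\mathcal{CL}(X^{\prime},\perp)$ (viewed as an implicative-ortholattice) agrees with the natural ``elementwise'' one.

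First I would invoke Proposition \ref{dacey-20} to regard $(\mathcal{CL}(X^{\prime},\perp),\ra,^*,X^{\prime})$ as an implicative-ortholattice in its own right. By Definition \ref{dacey-30}, assertion $(a)$ simply restates that this implicative-ortholattice is an implicative-orthomodular lattice. Thus the whole statement reduces to applying the characterization of i-OMLs given in Theorem \ref{cioml-140} to the i-OL $\mathcal{CL}(X^{\prime},\perp)$: an i-OL is an i-OML if and only if every pair of orthogonal elements lies in some i-Boolean subalgebra.

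The one point that needs a short verification before this application is legitimate is that the abstract orthogonality on $\mathcal{CL}(X^{\prime},\perp)$ induced by its i-OL structure (namely $A\perp B$ iff $A^*=A\ra B$ in the sense of Definition \ref{ooml-10}) coincides with the natural orthogonality $A\subseteq B^{\perp}$. Using the definitions $A^*=A^{\perp}$ and $A\ra B=(A\cap B^{\perp})^{\perp}$ together with $A^{\perp\perp}=A$ for orthoclosed $A$ (Remark \ref{dacey-10}), one computes that $A^{\perp}=(A\cap B^{\perp})^{\perp}$ is equivalent to $A=A\cap B^{\perp}$, i.e.\ $A\subseteq B^{\perp}$, as expected. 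This makes the notion of ``orthogonal pair'' appearing in $(b)$ the standard one.

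With this identification in hand, $(a)\Leftrightarrow (b)$ is immediate: $(X^{\prime},\perp)$ is Dacey iff $\mathcal{CL}(X^{\prime},\perp)$ is i-OML (Definition \ref{dacey-30}), iff any two orthogonal elements of $\mathcal{CL}(X^{\prime},\perp)$ are contained in an i-Boolean subalgebra of $\mathcal{CL}(X^{\prime},\perp)$ (Theorem \ref{cioml-140}). There is no real obstacle; the only thing to be careful about is to state the orthogonality check explicitly, since otherwise the reader might wonder whether condition $(b)$ refers to the abstract orthogonality of the ambient i-OL or to the orthogonality $\perp$ of the underlying orthoset, and the brief computation above confirms they are the same.
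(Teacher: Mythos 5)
Your proof is correct and follows exactly the paper's route: the paper's entire proof is a one-line appeal to Theorem \ref{cioml-140} applied to the implicative-ortholattice $\mathcal{CL}(X^{\prime},\perp)$ furnished by Proposition \ref{dacey-20}, combined with Definition \ref{dacey-30}. Your extra verification that the i-OL orthogonality $A^{*}=A\ra B$ on $\mathcal{CL}(X^{\prime},\perp)$ coincides with $A\subseteq B^{\perp}$ is a sound and worthwhile clarification that the paper leaves implicit.
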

\begin{proof}
It follows from Theorem \ref{cioml-140}. 
\end{proof}

The set $P(H)$ of one-dimensional subspaces of a Hilbert space $H$ is an orthogonality space under  
the orthogonality relation $\perp$ induced by the inner product on $H$. 
B. Lindenhovius and T. Vetterlein introduced in \cite{Lind1} the Sasaki maps in order to establish conditions 
under which an orthogonality space $(X,\perp)$ is isomorphic to $(P(H),\perp)$ for some orthomodular space $H$. \\
We adapt these notions to the case of implicative-ortholattices, proving that the associated orthogonality 
space of a complete implicative-orthomodular lattice is both a Sasaki space and a Dacey space. 
We also show that any complete implicative-ortholattice with a full set of projections is a Sasaki space. \\

For a subset $Y\subseteq X$, we denote by $\complement Y$ the complement of $Y$ with respect to $X$. 
We will also use the notation $\complement Y^{\perp}$ instead of $\complement (Y^{\perp})$. 

\begin{definition} \label{sasm-10} 
\emph{
Let $(X^{\prime},\perp)$ be the associated orthogonality space of an implicative-ortholattice, and 
let $A$ be an orthoclosed subset of $X^{\prime}$. 
A \emph{Sasaki map} to $A$ is a map $\varphi: \complement A^{\perp}\longrightarrow A$ satisfying the following conditions: \\
$(SM_1)$ $\varphi x=x$ for all $x\in A;$ \\
$(SM_2)$ for any $x,y\in \complement A^{\perp}$, $\varphi x\perp y$ if and only if $x\perp \varphi y$. \\
We call $(X^{\prime},\perp)$ a \emph{Sasaki space} if for any orthoclosed subset $A$ of $X^{\prime}$, there 
exists a Sasaki map $\varphi$ to $A$.
}
\end{definition}  

\noindent
According to \cite{Lind1}, any Sasaki space is a Dacey space. \\
We say that an implicative-ortholattice (implicative-orthomodular lattice) $(X,\ra,^*,1)$ is \emph{complete} 
if the corresponding lattice $(X,\wedge^P,\vee^P,0,1)$ (see Remark \ref{qbe-10-10}) is complete. \\
For any $x\in X$, denote $\downarrow x=\{y\in X\mid y\le_L x\}$. 
If $Y\subseteq X$, denote $\downarrow Y=\bigcup_{y\in Y}(\downarrow y)$. 

\begin{lemma} \label{sasm-10-10} Let $X$ be an implicative-ortholattice. 
The following hold for all $x,y\in X$: \\
$(1)$ $(\downarrow x)^{\perp}\cap X^{\prime}=\downarrow x^*\cap X^{\prime}=x^{\perp}\cap X^{\prime};$ \\
$(2)$ $(\downarrow x)\cap (\downarrow y)=\downarrow (x\ra y^*)^*;$ \\
$(3)$ $\downarrow(x\ra y)=(\downarrow x)\ra (\downarrow y)$. \\
If $X$ is complete and $Y\subseteq X$ is non-empty, then: \\
$(4)$ $\bigwedge_{y\in Y}(\downarrow y)=\downarrow (\bigwedge_{y\in Y}y);$ \\
$(5)$ $Y^{\perp}=\downarrow (\bigwedge_{y\in Y}y^*)$. 
\end{lemma}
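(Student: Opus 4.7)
The unifying idea is to translate every set-theoretic identity into the characterization of orthogonality given in Lemma~\ref{ooml-20}(7): for $u,v \in X$, $u \perp v$ if and only if $u \le_L v^*$. Since $\le_L$ is reflexive, antisymmetric and transitive on an implicative-ortholattice (Remark~\ref{iol-30-05}(2) and Proposition~\ref{qbe-20}(7)), each claim reduces to a manipulation of $\le_L$-down-sets together with the antitonicity of $^*$.

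For part~(1), I first note that $y \in x^{\perp}$ iff $y \le_L x^*$, giving immediately $x^{\perp} \cap X^{\prime} = \downarrow x^* \cap X^{\prime}$. For the first equality, if $z \in (\downarrow x)^{\perp} \cap X^{\prime}$, then taking $y := x$ (reflexivity of $\le_L$) forces $z \perp x$, hence $z \le_L x^*$; conversely, if $z \le_L x^*$ and $y \le_L x$, then Lemma~\ref{iol-30-20}(1) gives $x^* \le_L y^*$ and transitivity yields $z \le_L y^*$, i.e.\ $z \perp y$. Part~(2) is immediate from Proposition~\ref{qbe-20}(8) for the inclusion $\supseteq$ and from Lemma~\ref{iol-30-20}(4) together with transitivity for $\subseteq$.

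Part~(3) is then obtained by unfolding the definition $A \ra B = (A \cap B^{\perp})^{\perp}$ of implication on orthoclosed subsets: using~(1) we have $(\downarrow y)^{\perp} = \downarrow y^*$ (after intersection with $X^{\prime}$); using~(2) with $y$ replaced by $y^*$ and invoking involutivity $y^{**} = y$, we get $(\downarrow x) \cap (\downarrow y^*) = \downarrow (x \ra y)^*$; a second application of~(1) then produces $\downarrow (x \ra y)$. Part~(4) follows from the fact that the lattice order on $(X, \wedge^P, \vee^P, 0, 1)$ coincides with $\le_L$ (Remark~\ref{qbe-10-10}): $\bigcap_{y \in Y} \downarrow y$ is exactly the set of common $\le_L$-lower bounds of $Y$, which equals $\downarrow(\bigwedge_{y \in Y} y)$ by the universal property of the infimum in a complete lattice. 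Part~(5) is the orthogonal version: $z \perp y$ for every $y \in Y$ rewrites to $z \le_L y^*$ for every $y \in Y$, which by completeness is equivalent to $z \le_L \bigwedge_{y \in Y} y^*$.

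The only subtle point is bookkeeping around the element $0$: since $0 \perp y$ for every $y \in X$, one must be careful whether orthocomplements are computed in $X$ or in $X^{\prime}$. This is precisely why~(1) intersects with $X^{\prime}$ on both sides, and it also fixes the intended reading of $(\downarrow x) \ra (\downarrow y)$ in~(3) as the operation inherited from $\mathcal{CL}(X^{\prime}, \perp)$. Beyond this, I expect no substantive obstacle: each step is a direct application of a result from Section~2, with completeness entering only in~(4) and~(5) to guarantee existence of the relevant infima.
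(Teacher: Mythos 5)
Your proof is correct and follows essentially the same route as the paper: each part is reduced via Lemma \ref{ooml-20}(7) to manipulations of $\le_L$, with Proposition \ref{qbe-20}(8) and Lemma \ref{iol-30-20}(1),(4) supplying exactly the inequalities the paper uses, and (3)--(5) assembled from (1) and (2) just as in the paper's proof. The only slip is in part (2), where you have swapped the roles of the two ingredients: Proposition \ref{qbe-20}(8) gives the inclusion $(\downarrow x)\cap(\downarrow y)\subseteq\,\downarrow(x\ra y^*)^*$, while Lemma \ref{iol-30-20}(4) with transitivity gives the reverse inclusion.
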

\begin{proof}
$(1)$ First we show that $\downarrow x \cap \downarrow x^*=\{0\}$ and $\downarrow x\cap x^{\perp}=\{0\}$. 
Indeed, $u\in \downarrow x\cap \downarrow x^*$ implies $u\le_L x$ and $u\le_L x^*$, that is, 
$u\le_L x$ and $x\le_L u^*$. It follows that $u\le_L u^*$; hence, $u^*=u\ra u=1$ and $u=0$. 
Similarly, for any $u\in\downarrow x\cap x^{\perp}$ we have $u\le_L x$ and $u\perp x$; 
thus, $u\le_L x$ and $u\le_L x^*$. Finally, we get $u\le_L x\le_L u^*$, hence, $u=0$. \\
For any $u\in (\downarrow x)^{\perp}$, we have $u\perp x$ iff $u^*=u\ra x$ iff $u\le_L x^*$ iff 
$u\in \downarrow x^*$; hence, $(\downarrow x)^{\perp}\subseteq \downarrow x^*$. 
Conversely, if $u\in \downarrow x^*$, then $u\le_L x^*$. For all $v\in X^{\prime}$, $v\le_L x$, we have 
$x^*\le_L v^*$. It follows that $u\le_L v^*$; thus, $u^*=u\ra v$, that is, $u\perp v$. 
Hence, $u\in (\downarrow x)^{\perp}$; thus, $\downarrow x^*\subseteq (\downarrow x)^{\perp}$. 
It follows that $(\downarrow x)^{\perp}\cap X^{\prime}=\downarrow x^*\cap X^{\prime}$. 
Moreover, $u\in \downarrow x^*$ iff $u\le_L x^*$ iff $u^*=u\ra x$ iff $u\perp x$ iff $x\in x^{\perp}$. 
It follows that $\downarrow x^*\cap X^{\prime}=x^{\perp}\cap X^{\prime}$. \\
$(2)$ If $u\in (\downarrow x)\cap (\downarrow y)$, we have $u\le_L x, y$; hence, $u\le_L(x\ra y^*)^*$ 
(by Prop. \ref{qbe-20}$(8)$).  
It follows that $u\in \downarrow (x\ra y^*)^*$; thus, 
$(\downarrow x)\cap (\downarrow y)\subseteq \downarrow (x\ra y^*)^*$.
Conversely, $u\in \downarrow (x\ra y^*)^*$ implies $u\le_L (x\ra y^*)^*\le_L x,y$, that is, 
$u\in (\downarrow x)\cap (\downarrow y)$; hence, 
$\downarrow (x\ra y^*)^*\subseteq (\downarrow x)\cap (\downarrow y)$. \\
$(3)$ Using $(2)$, we have: \\
$\hspace*{2.00cm}$ $\downarrow(x\ra y)=\downarrow(x\ra y)^{**}=(\downarrow(x\ra y)^*)^{\perp}
                   =(\downarrow x\cap \downarrow y^*)^{\perp}$ \\
$\hspace*{3.70cm}$ $=(\downarrow x\cap (\downarrow y)^{\perp})^{\perp}
                    =(\downarrow x)\ra (\downarrow y)$. \\
$(4)$ For any $u\in X^{\prime}$, we have: 
$u\in \bigwedge_{y\in Y}(\downarrow y)$ iff $u\in \downarrow y$ for all $y\in Y$ iff $u \le_L y$ for all $y\in Y$ iff 
   $u\le_L \bigwedge_{y\in Y}y$ for all $y\in Y$ iff $u\in \downarrow (\bigwedge_{y\in Y}y)$. 
Hence, $\bigwedge_{y\in Y}(\downarrow y)=\downarrow (\bigwedge_{y\in Y}y)$. \\
$(5)$ Using $(3)$, we get: \\
$\hspace*{2.00cm}$ $Y^{\perp}=\{x\in X\mid x\perp y$ for all $y\in Y\}
                             =\{x\in X\mid x\le_L y^*$ for all $y\in Y\}$ \\
$\hspace*{2.70cm}$ $=\{x\in X\mid x\in \downarrow y^*$ for all $y\in Y\}                             
                             = \bigwedge_{y\in Y}(\downarrow y^*)=\downarrow(\bigwedge_{y\in Y}y^*)$. 
\end{proof}

Following the idea from \cite[Prop. 2.8]{Lind1}, we show that any Sasaki projection on a complete implicative-orthomodular lattice induces a Sasaki map on the associated orthogonality space. 

\begin{proposition} \label{sasm-20-10} Let $(X,\ra,^*,1)$ be a complete implicative-ortholattice. 
The following hold: \\
$(1)$ $\mathcal{CL}(X^{\prime},\perp)=\{\downarrow x\cap X^{\prime}\mid x\in X\};$ \\
$(2)$ $X$ and $\mathcal{CL}(X^{\prime},\perp)$ are isomorphic as implicative-ortholattices. 
\end{proposition}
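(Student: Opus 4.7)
The plan is to introduce the map $\phi:X\longrightarrow \mathcal{CL}(X^{\prime},\perp)$ defined by $\phi(x)=\downarrow x\cap X^{\prime}$, prove (1) by showing that the image of $\phi$ is exactly $\mathcal{CL}(X^{\prime},\perp)$, and then deduce (2) by verifying that $\phi$ is an injective homomorphism preserving $1$, $^{*}$, and $\ra$.

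For the inclusion $\{\downarrow x\cap X^{\prime}\mid x\in X\}\subseteq \mathcal{CL}(X^{\prime},\perp)$, I would fix $x\in X$, set $A=\downarrow x\cap X^{\prime}$, and apply Lemma \ref{sasm-10-10}(1) inside the orthogonality space to obtain $A^{\perp}=\downarrow x^{*}\cap X^{\prime}$; iterating and using involutivity gives $A^{\perp\perp}=\downarrow x^{**}\cap X^{\prime}=\downarrow x\cap X^{\prime}=A$, so $A$ is orthoclosed. For the reverse inclusion, given $A\in \mathcal{CL}(X^{\prime},\perp)$, I would use the completeness of $X$ to set $x=\bigvee_{a\in A}a$ in the associated lattice $(X,\wedge^P,\vee^P,0,1)$ from Remark \ref{qbe-10-10} (adopting $\bigvee\emptyset=0$). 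Combining Lemma \ref{sasm-10-10}(5) with the De Morgan laws in this complete ortholattice, one gets $A^{\perp}=\downarrow(\bigwedge_{a\in A}a^{*})\cap X^{\prime}=\downarrow x^{*}\cap X^{\prime}$, and repeating the argument yields $A=A^{\perp\perp}=\downarrow x\cap X^{\prime}=\phi(x)$.

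For (2), I would first note that $\phi$ is injective: $\downarrow x\cap X^{\prime}=\downarrow y\cap X^{\prime}$ forces $x\le_L y$ and $y\le_L x$ when $x,y\ne 0$, and the cases involving $0$ are immediate since $\phi(0)=\emptyset$. Surjectivity is exactly (1). Structure preservation would then be checked as follows: $\phi(1)=X^{\prime}$; Lemma \ref{sasm-10-10}(1) gives $\phi(x^{*})=\downarrow x^{*}\cap X^{\prime}=(\downarrow x\cap X^{\prime})^{\perp}=\phi(x)^{*}$; and using Lemma \ref{sasm-10-10}(2), $\phi(x)\cap \phi(y)^{\perp}=(\downarrow x\cap \downarrow y^{*})\cap X^{\prime}=\downarrow(x\ra y)^{*}\cap X^{\prime}$, so $\phi(x)\ra \phi(y)=(\downarrow(x\ra y)^{*}\cap X^{\prime})^{\perp}=\downarrow(x\ra y)\cap X^{\prime}=\phi(x\ra y)$.

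The most delicate point is tracking the intersection with $X^{\prime}$ consistently: Lemma \ref{sasm-10-10}(1),(5) compute orthogonals of $\downarrow$-sets in the ambient $X$, while the orthoclosure operation in $(X^{\prime},\perp)$ lives on $X^{\prime}$, so each step must be intersected with $X^{\prime}$. This bookkeeping is most sensitive in the degenerate case $A=\emptyset$ (giving $x=0$ and $x^{*}=1$). Completeness of $X$ enters precisely once, in forming the join $\bigvee A$ during the surjectivity argument; De Morgan is then used to exchange $\bigwedge_{a\in A}a^{*}$ with $(\bigvee_{a\in A}a)^{*}$.
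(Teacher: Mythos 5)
Your proposal is correct and follows essentially the same route as the paper: the paper also defines the map $x\mapsto\ \downarrow x\cap X^{\prime}$, obtains $(1)$ from Lemma \ref{sasm-10-10}$(5)$ applied to $A=(A^{\perp})^{\perp}$, and verifies preservation of $\ra$ and $^{*}$ via Lemma \ref{sasm-10-10}$(1)$,$(3)$. Your version is somewhat more careful than the paper's (it explicitly proves the inclusion $\{\downarrow x\cap X^{\prime}\}\subseteq\mathcal{CL}(X^{\prime},\perp)$, the injectivity, and the degenerate case $A=\emptyset$), but the underlying argument is the same.
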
 
\begin{proof}
$(1)$ The proof follows similar to that in \cite[Prop. 2.8]{Lind1}. 
By Lemma \ref{sasm-10-10}, for any non-empty subset $Y\subseteq X$ we have 
$Y^{\perp}=\downarrow (\bigwedge_{y\in Y}y^*)$. 
Since for any $A\in \mathcal{CL}(X^{\prime},\perp)$, $A=Y^{\perp}$ for some $Y\subseteq X$, it follows that 
$\mathcal{CL}(X^{\prime},\perp)=\{\downarrow x\cap X^{\prime}\mid x\in X\}$. \\
$(2)$ Define $H:X\longrightarrow \mathcal{CL}(X^{\prime},\perp)$ by $H(x)=\downarrow x\cap X^{\prime}$. \\
Obviously, $x\ne y$ implies $\downarrow x\cap X^{\prime} \ne \downarrow y\cap X^{\prime}$. 
Since $\mathcal{CL}(X^{\prime},\perp)=\{\downarrow x\cap X^{\prime}\mid x\in X\}$, for any 
$A\in \mathcal{CL}(X^{\prime},\perp)$ there exists $x\in X$ such that $H(x)=A$.
Applying Lemma \ref{sasm-10-10}, we get: \\
$\hspace*{2.00cm}$ $H(x\ra y)=\downarrow(x\ra y)\cap X^{\prime}=(\downarrow x\ra \downarrow y)\cap X^{\prime}
                             =H(x)\ra H(y)$, \\
$\hspace*{2.00cm}$ $H(x^*)=\downarrow x^*\cap X^{\prime}=(\downarrow x)^{\perp}\cap X^{\prime}=(H(x))^{\perp}$. \\ 
It folllows that $H$ is an isomorphism of i-OLs. 
\end{proof}

\begin{theorem} \label{sasm-20} Let $(X,\ra,^*,1)$ be a complete implicative-orthomodular lattice. 
Then the associated orthogonality space $(X^{\prime},\perp)$ is a Sasaki space. 
\end{theorem}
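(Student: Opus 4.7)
The plan is to build a Sasaki map to each orthoclosed subset directly from a Sasaki projection. Since $X$ is a complete implicative-orthomodular lattice, Proposition \ref{sasm-20-10} tells us that every $A\in\mathcal{CL}(X^{\prime},\perp)$ has the form $A=\downarrow a\cap X^{\prime}$ for some $a\in X$, and by Lemma \ref{sasm-10-10}$(1)$ the orthogonal complement is $A^{\perp}=\downarrow a^{*}\cap X^{\prime}$. My proposal is then to define $\varphi:\complement A^{\perp}\longrightarrow A$ by the restriction of the Sasaki projection $\varphi_{a}$, that is, $\varphi(x)=\varphi_{a}(x)=x\wedge_{Q}a$.

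To verify that this is well defined as a map into $A$, I would argue as follows. Clearly $\varphi_{a}(x)=x\wedge_{Q}a\le_{L}a$, so the value lies in $\downarrow a$; what needs justification is that it lies in $X^{\prime}$, i.e.\ is nonzero. Now $x\in\complement A^{\perp}$ says $x\in X^{\prime}$ and $x\not\le_{L}a^{*}$, which by Lemma \ref{ooml-20}$(7)$ is equivalent to $x\not\perp a$. Since $X$ is an i-OML, Proposition \ref{ooml-50} converts this into $x\wedge_{Q}a\neq 0$, so $\varphi(x)\in\downarrow a\cap X^{\prime}=A$, as required.

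It remains to check conditions $(SM_{1})$ and $(SM_{2})$, and both are essentially immediate from the properties of $\varphi_{a}$ already established in Section 4. For $(SM_{1})$: if $x\in A$, then $x\le_{L}a$, so Proposition \ref{sioml-40}$(1)$ gives $\varphi_{a}(x)=x$. For $(SM_{2})$: Proposition \ref{sioml-40}$(6)$ states precisely that $\varphi_{a}x\perp y$ iff $x\perp\varphi_{a}y$, valid in any i-OML. Hence $\varphi$ is a Sasaki map to $A$, and because $A$ was arbitrary, $(X^{\prime},\perp)$ is a Sasaki space.

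The only genuinely delicate point is the nonvanishing of $\varphi(x)$, which is what forces the hypothesis that $X$ is an i-OML rather than merely an i-OL: without the equivalence $x\perp y\Longleftrightarrow x\wedge_{Q}y=0$ from Proposition \ref{ooml-50}, one could not guarantee that $\varphi_{a}$ sends $\complement A^{\perp}$ into $X^{\prime}$. Completeness, meanwhile, is only used to guarantee the representation $A=\downarrow a\cap X^{\prime}$ from Proposition \ref{sasm-20-10}, so that a single element $a$ is available to drive the Sasaki projection.
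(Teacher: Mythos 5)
Your proposal is correct and follows essentially the same route as the paper's proof: represent $A=\downarrow a\cap X^{\prime}$ via Proposition \ref{sasm-20-10}, restrict the Sasaki projection $\varphi_a$ to $\complement A^{\perp}$, and verify $(SM_1)$ and $(SM_2)$ using Proposition \ref{sioml-40}$(1)$ and $(6)$. The only cosmetic difference is that you establish the nonvanishing of $\varphi_a(x)$ by combining Lemma \ref{ooml-20}$(7)$ with Proposition \ref{ooml-50}, whereas the paper invokes Proposition \ref{sioml-40}$(2)$ directly; these are equivalent.
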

\begin{proof} 
Let $A\in \mathcal{CL}(X^{\prime},\perp)=\{\downarrow x\cap X^{\prime}\mid x\in X\}$, 
with $A=\downarrow x\cap X^{\prime}$ for some $x\in X$. 
Consider the map $\varphi_A:\complement A^{\perp}\longrightarrow A$ defined by 
$\varphi_A x=\varphi{_x}_{\mid \complement A^{\perp}}$, where $\varphi_x:X\longrightarrow X$ is the Sasaki projection on $X$ defined by $x$, and $\varphi{_x}_{\mid \complement A^{\perp}}$ is the restriction of $\varphi{_x}$ to 
$\complement A^{\perp}$. 
We have: \\ 
$\hspace*{2.00cm}$ $\complement A^{\perp}=\complement(\downarrow x\cap X^{\prime})^{\perp}
                   =\complement(H(x))^{\perp}
                   =\complement H(x^*)=X^{\prime}\setminus(\downarrow x^*\cap X^{\prime})
                    =X^{\prime}\setminus \downarrow x^*$,  \\
where $H$ is the isomorphism defined in Proposition \ref{sasm-20-10}.                     
Hence, for any $y\in \complement A^{\perp}$, $y\notin \downarrow x^*$; it follows that $y\nleq x^*$. 
Applying Proposition \ref{sioml-40}$(2)$, $\varphi_x y\ne 0$. 
Moreover, since $\varphi_A y=\varphi_x y=y\wedge_Q x\le_L x$, we have $\varphi_A x\in A$. 
Hence, $\varphi_A$ is well defined. 
For any $y\in A$, we have $y\in \downarrow x$, that is, $y\le_L x$. Since, by Proposition \ref{sioml-40}$(1)$, 
we have $\varphi_x y=y$, it follows that condition $(SM_1)$ is verified. 
Since for all $x,y\in \complement A^{\perp}$, by Proposition \ref{sioml-40}$(6)$ we have $\varphi_A x\perp y$ 
iff $x\perp \varphi_A y$, condition $(SM_2)$ is also satisfied. 
Hence, $\varphi_A$ is a Sasaki map, that is, $(X^{\prime},\perp)$ is a Sasaki space. 
\end{proof}

\begin{proposition} \label{sasm-50} If a complete implicative-ortholattice admits a full Sasaki set of projections, then its associated orthogonality space is a Sasaki space. 
\end{proposition}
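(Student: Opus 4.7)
The plan is to derive the proposition almost immediately from two results already established earlier in the paper, without any new constructions. First, I would apply Theorem \ref{ssioml-70}, which characterizes implicative-orthomodular lattices as exactly those implicative-ortholattices that admit a full Sasaki set of projections. Since $X$ is a complete implicative-ortholattice admitting such a set by hypothesis, it follows that $X$ is, in fact, a complete implicative-orthomodular lattice.

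Next, I would invoke Theorem \ref{sasm-20}, which asserts that the associated orthogonality space $(X^{\prime},\perp)$ of a complete implicative-orthomodular lattice is a Sasaki space. Applying this to $X$, we conclude at once that $(X^{\prime},\perp)$ is a Sasaki space, yielding the statement.

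There is essentially no technical obstacle: the proposition is a composition of the characterization theorem for i-OMLs via full Sasaki sets of projections and the explicit construction of Sasaki maps from Sasaki projections in complete i-OMLs. If a more constructive statement were desired, one could unpack the argument by noting, via Proposition \ref{ssioml-60-10}, that the members $\varphi^{x}$ of the given full Sasaki set satisfy $\varphi^{x}y = y\wedge_{Q}x$, so they coincide with the Sasaki projections $\varphi_{x}$ on the i-OML $X$; then, for each orthoclosed $A = \downarrow x\cap X^{\prime}$, the restriction of $\varphi^{x}$ to $\complement A^{\perp}$ is precisely the Sasaki map $\varphi_{A}$ built in the proof of Theorem \ref{sasm-20}. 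This shows that the Sasaki maps witnessing the Sasaki-space property are in fact produced directly from the given full Sasaki set, but the bare appeal to the two earlier theorems is sufficient to complete the proof.
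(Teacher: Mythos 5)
Your proposal is correct and follows exactly the paper's own proof: Theorem \ref{ssioml-70} upgrades the complete implicative-ortholattice to a complete implicative-orthomodular lattice, and Theorem \ref{sasm-20} then yields that its associated orthogonality space is a Sasaki space. The extra remark identifying the maps $\varphi^x$ with the Sasaki projections via Proposition \ref{ssioml-60-10} is a nice optional elaboration but not needed.
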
 
\begin{proof}
Let $(X,\ra,^*,1)$ be a complete i-OL and let $(X^{\prime},\perp)$ be the associated orthogonality space ($X^{\prime}=X\setminus \{0\}$). 
Since $X$ admits a full Sasaki set of projections, then by Theorem \ref{ssioml-70} it is a complete  implicative-orthomodular lattice. Applying Theorem \ref{sasm-20}, for any orthoclosed subset $A$ of $X^{\prime}$ there exists a Sasaki map to $A$. Hence, $(X^{\prime},\perp)$ is a Sasaki space.  
\end{proof}

A \emph{decomposition} of an orthogonality space $(X,\perp)$ is a pair $(A_1,A_2)$ of non-empty subsets of $X$ 
such that $A_1=A_2^{\perp}$ and $A_2=A_1^{\perp}$. Clearly, $A_1\perp A_2$ and $(A_1\cup A_2)^{\perp}=\emptyset$. 
A maximal $\perp$-set of $X$ is called a \emph{block}, and a \emph{block partition} is a partition of a block 
of $X$ into non-empty subsets called \emph{cells} \cite{Paseka2}. \\
Motivated by the example of $(P(H),\perp)$, J. Paseka and T. Vetterlein investigated in \cite{Paseka2} the conditions under which any block partition of an orthogonality space gives rise to a unique decomposition of the space.
They called such spaces normal orthogonality spaces. \\
We adapt the notion of normal orthogonality spaces to the case of implicative-ortholattices. 
Given an implicative-ortholattice $X$, if $(X^{\prime},\perp)$ is normal, we prove that the set of all its orthoclosed subsets forms an implicative-Boolean subalgebra of $\mathcal{CL}(X^{\prime},\perp)$. 

\begin{definition} \label{normal-20} \cite{Paseka2} 
\emph{
An orthogonality space $(X,\perp)$ is called \emph{normal} if for any block partition $(E_1,E_2)$ there is a unique 
decomposition $(\overline{E_1}, \overline{E_2})$ such that $E_1\subseteq \overline{E_1}$, 
$E_2\subseteq \overline{E_2}$. 
}
\end{definition}

\begin{lemma} \label{normal-30} \cite[Prop. 2.5]{Paseka2} Let $(X,\perp)$ be an orthogonality space. 
Then $X$ is normal if and only if for any block partition $(E_1,E_2)$, the pair $(E_2^{\perp},E_1^{\perp})$ is a decomposition of $X$. 
\end{lemma}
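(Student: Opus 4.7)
The plan is to exploit the identity $A^{\perp\perp\perp}=A^{\perp}$ (see Remark \ref{dacey-10}), which makes $A \mapsto A^{\perp\perp}$ an idempotent closure, together with the uniqueness clause in the definition of a normal orthogonality space.

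For the forward implication, I would begin by fixing a block partition $(E_1, E_2)$ of some block $B = E_1 \cup E_2$ in a normal orthogonality space $(X,\perp)$. Since $E_1 \perp E_2$, the containments $E_1 \subseteq E_2^{\perp}$ and $E_2 \subseteq E_1^{\perp}$ are immediate. The crucial step is to verify that the two pairs $(E_1^{\perp\perp}, E_1^{\perp})$ and $(E_2^{\perp}, E_2^{\perp\perp})$ are both decompositions of $X$, each containing $E_1$ in its first component and $E_2$ in its second. The required identities $(E_1^{\perp\perp})^{\perp}=E_1^{\perp}$ and $(E_1^{\perp})^{\perp}=E_1^{\perp\perp}$ (and symmetrically for the second pair) follow directly from $A^{\perp\perp\perp}=A^{\perp}$; non-emptiness of all four components follows from $E_1 \subseteq E_1^{\perp\perp}$, $E_2 \subseteq E_1^{\perp}$, $E_1 \subseteq E_2^{\perp}$, $E_2 \subseteq E_2^{\perp\perp}$, together with the non-emptiness of $E_1$ and $E_2$. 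The uniqueness in the normality assumption then forces $E_1^{\perp\perp}=E_2^{\perp}$ and $E_1^{\perp}=E_2^{\perp\perp}$, so $(E_2^{\perp}, E_1^{\perp})$ coincides with $(E_1^{\perp\perp}, E_1^{\perp})$ and is therefore itself a decomposition.

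For the converse, I would assume that $(E_2^{\perp}, E_1^{\perp})$ is a decomposition for every block partition $(E_1, E_2)$, and derive normality of $(X,\perp)$. The existence half of normality is immediate since $(E_2^{\perp}, E_1^{\perp})$ itself satisfies $E_1 \subseteq E_2^{\perp}$ and $E_2 \subseteq E_1^{\perp}$. For uniqueness, let $(A_1, A_2)$ be any decomposition with $E_1 \subseteq A_1$ and $E_2 \subseteq A_2$. Taking orthogonal complements (which is order-reversing) gives $A_1 = A_2^{\perp} \subseteq E_2^{\perp}$ and $A_2 = A_1^{\perp} \subseteq E_1^{\perp}$. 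For the opposite inclusions, the orthoclosedness $A_1 = A_1^{\perp\perp}$ combined with $E_1 \subseteq A_1$ yields $E_1^{\perp\perp} \subseteq A_1$; since the decomposition hypothesis provides $E_1^{\perp\perp} = (E_1^{\perp})^{\perp} = E_2^{\perp}$, one obtains $E_2^{\perp} \subseteq A_1$, and a symmetric argument gives $E_1^{\perp} \subseteq A_2$. Hence $A_1 = E_2^{\perp}$ and $A_2 = E_1^{\perp}$, establishing uniqueness.

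The main obstacle I foresee is the careful bookkeeping around the identity $A^{\perp\perp\perp}=A^{\perp}$ when verifying that the candidate pairs $(E_1^{\perp\perp}, E_1^{\perp})$ and $(E_2^{\perp}, E_2^{\perp\perp})$ meet every clause of Definition \ref{normal-20}, in particular the non-emptiness of both components, \emph{before} invoking the uniqueness hypothesis. Once this is set up, both directions reduce to a routine chase of inclusions using only elementary properties of the orthogonal complement together with the non-emptiness and orthogonality data coming from the block partition.
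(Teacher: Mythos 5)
The paper does not prove this lemma itself --- it imports it by citation from \cite[Prop.\ 2.5]{Paseka2} --- so there is no in-paper argument to compare against. Your proof is correct and complete: the forward direction correctly exhibits $(E_1^{\perp\perp},E_1^{\perp})$ and $(E_2^{\perp},E_2^{\perp\perp})$ as two decompositions extending $(E_1,E_2)$ (with non-emptiness supplied by $E_1\subseteq E_1^{\perp\perp}\cap E_2^{\perp}$ and $E_2\subseteq E_1^{\perp}\cap E_2^{\perp\perp}$) and lets uniqueness identify them, and the converse correctly combines the order-reversal $E_i\subseteq A_i\Rightarrow A_i=A_j^{\perp}\subseteq E_j^{\perp}$ with the orthoclosedness $E_j^{\perp}=E_i^{\perp\perp}\subseteq A_i^{\perp\perp}=A_i$ to pin down the unique decomposition. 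Only the general identities $A\subseteq A^{\perp\perp}$ and $A^{\perp\perp\perp}=A^{\perp}$ are used, exactly as you anticipated.
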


The next result is inspired by \cite[Prop. 2.8]{Paseka2}. 

\begin{proposition} \label{normal-40} Let $(X,\ra,^*,1)$ be an implicative-ortholattice such that  
the associated orthogonality space $(X^{\prime},\perp)$ is normal. For any maximal $\perp$-subset $E$ of $X$, 
denote $\mathcal{B}_E=\{A^{\perp\perp}\mid A\subseteq E\}$. 
Then $(\mathcal{B}_E, \ra, ^{\perp}, X)$ is an implicative-Boolean subalgebra of $\mathcal{CL}(X^{\prime},\perp)$.  
\end{proposition}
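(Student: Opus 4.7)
The plan is to use the normality hypothesis to obtain the identity $A^{\perp\perp}=(E\setminus A)^{\perp}$ for subsets $A\subseteq E$, then derive closure of $\mathcal{B}_E$ under $\cap$, $\vee$ and $^{\perp}$ inside $\mathcal{CL}(X^{\prime},\perp)$, and finally promote $\mathcal{B}_E$ from an i-OL subalgebra to an implicative-Boolean algebra via the distributive law induced by a Boolean-algebra surjection from $\mathcal{P}(E)$.

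First I would handle the endpoints. Clearly $\emptyset^{\perp\perp}=\emptyset\in\mathcal{B}_E$. For the top, I would use maximality of $E$: if some $x\in E^{\perp}$ existed, then $x\notin E$ (since $\perp$ is irreflexive) and $E\cup\{x\}$ would be a strictly larger $\perp$-set, contradicting maximality. Hence $E^{\perp}=\emptyset$ and $E^{\perp\perp}=\emptyset^{\perp}=X^{\prime}\in\mathcal{B}_E$. For any $A\subseteq E$ with $\emptyset\neq A\neq E$, the pair $(A,E\setminus A)$ is a block partition of $E$, so normality together with Lemma \ref{normal-30} yields that $((E\setminus A)^{\perp},A^{\perp})$ is a decomposition, i.e.
\[
A^{\perp\perp}=(E\setminus A)^{\perp}\quad\text{and}\quad A^{\perp}=(E\setminus A)^{\perp\perp}.
\]
In particular $A^{\perp}\in\mathcal{B}_E$, so $\mathcal{B}_E$ is closed under $^{\perp}$, and one gets the alternative description $\mathcal{B}_E=\{A^{\perp}\mid A\subseteq E\}$.

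Next I would show closure under intersection. For $A,B\subseteq E$ the identity above gives
\[
A^{\perp\perp}\cap B^{\perp\perp}=(E\setminus A)^{\perp}\cap(E\setminus B)^{\perp}=\bigl((E\setminus A)\cup(E\setminus B)\bigr)^{\perp}=\bigl(E\setminus(A\cap B)\bigr)^{\perp}=(A\cap B)^{\perp\perp},
\]
which lies in $\mathcal{B}_E$ since $A\cap B\subseteq E$. Because $P\ra Q=(P\cap Q^{\perp})^{\perp}$ and $P\vee Q=(P^{\perp}\cap Q^{\perp})^{\perp}$, this together with closure under $^{\perp}$ gives closure under $\ra$ and $\vee$ as well; thus $\mathcal{B}_E$ is a subalgebra of the i-OL $(\mathcal{CL}(X^{\prime},\perp),\ra,^{\perp},X^{\prime})$ furnished by Proposition \ref{dacey-20}, and therefore inherits the axioms of an implicative-ortholattice.

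It remains to verify that $\mathcal{B}_E$ is actually implicative-Boolean. I would show that the assignment $\Phi:\mathcal{P}(E)\to\mathcal{B}_E$, $\Phi(A)=A^{\perp\perp}$, is a surjective Boolean-algebra homomorphism: it preserves intersections by the computation just performed, it preserves unions because $(A\cup B)^{\perp\perp}=(A^{\perp}\cap B^{\perp})^{\perp}=A^{\perp\perp}\vee B^{\perp\perp}$, and it sends complementation in $E$ to $^{\perp}$ thanks to the normality identity $(E\setminus A)^{\perp\perp}=A^{\perp}$. As the image of a Boolean algebra under such a homomorphism, $(\mathcal{B}_E,\cap,\vee,^{\perp},\emptyset,X^{\prime})$ is a lattice-theoretic Boolean algebra, hence distributive, so it satisfies $(Idis_1)$. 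Theorem \ref{dioml-110} then makes $(\mathcal{B}_E,\ra,^{\perp},X^{\prime})$ an implicative-Boolean subalgebra of $\mathcal{CL}(X^{\prime},\perp)$. The main obstacle is securing the normality-induced duality $A^{\perp\perp}=(E\setminus A)^{\perp}$, including the degenerate cases $A=\emptyset$ and $A=E$; once that is in hand, closure under the operations and the distributive law follow formally by elementary set-theoretic manipulations.
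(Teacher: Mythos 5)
Your proof is correct, and it rests on the same key ingredient as the paper's: the normality identity $A^{\perp\perp}=(E\setminus A)^{\perp}$, $A^{\perp}=(E\setminus A)^{\perp\perp}$, obtained by applying Lemma \ref{normal-30} to the block partition $(A,E\setminus A)$, together with the observation $E^{\perp}=\emptyset$ which settles the degenerate cases. Where you diverge is in the two verification steps. For closure under meets, the paper simply defers to \cite[Prop.~2.8]{Paseka2}, whereas you give an explicit computation $A^{\perp\perp}\cap B^{\perp\perp}=(E\setminus A)^{\perp}\cap(E\setminus B)^{\perp}=(E\setminus(A\cap B))^{\perp}=(A\cap B)^{\perp\perp}$ — a genuine improvement in self-containedness. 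For the Boolean property, the paper verifies the divisibility identity $(Idiv)$ directly by a chain of orthocomplement manipulations, staying inside the implicative signature; you instead exhibit $\Phi:\mathcal{P}(E)\to\mathcal{B}_E$, $A\mapsto A^{\perp\perp}$, as a surjective homomorphism of the Boolean operations (meet, join, complement), transport distributivity along it, and then invoke the equivalence of $(Idis_1)$ with lattice distributivity and Theorem \ref{dioml-110}. Your route is conceptually cleaner — it makes visible that $\mathcal{B}_E$ is literally a quotient-image of the power set of $E$ — at the cost of passing through the term-equivalence between the implicative and lattice signatures; the paper's route is more computational but never leaves the $(\ra,{}^{\perp})$ language. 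Both are sound, and your explicit treatment of the endpoints $A=\emptyset$, $A=E$ closes a case the paper handles only implicitly.
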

\begin{proof} 
Clearly, $\mathcal{B}_E \subseteq \mathcal{CL}(X^{\prime},\perp)$ and $\emptyset \in \mathcal{B}_E$. 
Obviously, $E^{\perp\perp}\subseteq X$. Assume that there exists $x\in X$, $x\notin E^{\perp\perp}$. 
Then $E^{\perp}=\emptyset$ (indeed, if $e\in E^{\perp}$, then $e\perp E$; thus, $E\cup \{e\}$ is a $\perp$-set 
contained in $X$; hence, $E$ is not maximal, a contradiction). It follows that $E^{\perp\perp}=X$, with 
$E\subseteq E$, that is, $X \in \mathcal{B}_E$. 
One can easy prove that $\mathcal{B}_E$ is closed under arbitrary meets and joins (see also \cite[Prop. 2.8]{Paseka2}). 
Let $A^{\perp\perp}\in \mathcal{B}_E$. Since $(A,E\setminus A)$ is a block partition, then  
by Lemma \ref{normal-30}, $((E\setminus A)^{\perp},A^{\perp})$ is a decomposition of $X$. 
Hence, $A^{\perp}\perp (E\setminus A)^{\perp}$. It follows that $A^{\perp}= (E\setminus A)^{\perp\perp}$ and  
$(A^{\perp\perp})^{\perp}=A^{\perp}=(E\setminus A)^{\perp\perp}\in \mathcal{B}_E$; 
hence, $\mathcal{B}_E$ is closed under $^{\perp}$.
For any $A^{\perp\perp},B^{\perp\perp}\in \mathcal{B}_E$, we have 
$(A^{\perp\perp} \ra B^{\perp\perp})^{\perp\perp}=(A^{\perp\perp}\cap B^{\perp})^{\perp}\in \mathcal{B}_E$. 
Moreover, 
$A^{\perp\perp}\ra (A^{\perp\perp}\ra B^{\perp\perp})^{\perp}=A^{\perp}\vee (A^{\perp}\vee B^{\perp\perp})^{\perp}
=A^{\perp}\vee (A^{\perp\perp}\wedge B^{\perp})=A^\perp\vee B^\perp=(A^{\perp\perp}\wedge B^{\perp\perp})^{\perp}
=A^{\perp\perp}\ra (B^{\perp\perp})^{\perp}$, that is, $\mathcal{B}_E$ satisfies condition $\mathcal{D}$. 
Hence, $\mathcal{B}_E$ is an i-Boolean subalgebra of $\mathcal{CL}(X^{\prime},\perp)$.  
\end{proof}

\begin{example} \label{sasm-30}
Consider the i-OL $(X,\ra,^*,1)$ from Example \ref{ooml-60}, with $X=\{0,a,b,c,d,1\}$. 
The associated orthogonality space of $X$ is $(X^{\prime},\perp)$, where $X^{\prime}=X\setminus \{0\}$. 
By letting $A=\{a\}$, $B=\{d\}$, $C=\{a,b\}$, $D=\{c,d\}$, we have shown that $\mathcal{CL}(X^{\prime},\perp)=\{\emptyset, A, B, C, D, X^{\prime}\}$ and 
$(\mathcal{CL}(X^{\prime},\perp),\ra,^*,X^{\prime})$ is an i-OL, but not an i-OML. 
It follows that $(X^{\prime},\perp)$ is not a Dacey space. 
We have $C^{\perp}=\{d\}$ and $\complement C^{\perp}=\{a,b,c,1\}$. 
Assume that there exists a Sasaki map \\
$\varphi: \complement C^{\perp}\longrightarrow C$; hence, by $(SS_1)$, 
$\varphi a=a$ and $\varphi b=b$. For $a,c\in \complement C^{\perp}$ we have $\varphi a=a\perp c$, 
but $a\not\perp \varphi c\in \{a,b\}$. Hence, condition $(SS_2)$ is not satisfied, so there are no 
Sasaki maps on $C=\{a,b\}$. Thus, $(X^{\prime},\perp)$ is not a Sasaki space. 
\end{example}

\begin{example} \label{sasm-40} 
Consider the set $X=\{0,a,b,c,d,1\}$ and the operations $\ra$, $\wedge_Q$ defined below.
\[
\begin{picture}(50,-70)(0,40)
\put(37,11){\line(-5,4){32}} 
\put(18,37){\circle*{3}}
\put(-5,35){$a$}

\put(37,11){\line(-3,4){20}} 
\put(6,37){\circle*{3}}
\put(23,35){$b$}

\put(37,11){\line(2,5){10}} 
\put(47,37){\circle*{3}}
\put(38,35){$c$}

\put(39,11){\line(3,4){20}}
\put(37,11){\circle*{3}}
\put(34,0){$0$}

\put(18,37){\line(3,4){17}}
\put(34,59){\circle*{3}}
\put(32,64){$1$}

\put(47,37){\line(-3,5){12}}  

\put(6,37){\line(5,4){28}}

\put(60,37){\line(-6,5){25}}  
\put(59,37){\circle*{3}}
\put(65,35){$d$}
\end{picture}
\hspace*{2cm}
\begin{array}{c|cccccc}
\rightarrow & 0 & a & b & c & d & 1 \\ \hline
0 & 1 & 1 & 1 & 1 & 1 & 1 \\
a & d & 1 & 1 & 1 & d & 1 \\
b & c & 1 & 1 & c & 1 & 1 \\
c & b & 1 & b & 1 & 1 & 1 \\
d & a & a & 1 & 1 & 1 & 1 \\
1 & 0 & a & b & c & d & 1
\end{array}
\hspace{10mm}
\begin{array}{c|ccccccc}
\wedge_Q & 0 & a & b & c & d & 1 \\ \hline
0        & 0 & 0 & 0 & 0 & 0 & 0 \\ 
a        & 0 & a & b & c & 0 & a \\ 
b        & 0 & a & b & 0 & d & b \\ 
c        & 0 & a & 0 & c & d & c \\
d        & 0 & 0 & b & c & d & d \\
1        & 0 & a & b & c & d & 1 
\end{array}.
\]


Then $(X,\ra,^*,1)$, where $x^*=x\ra 0$ is an i-OML. 
Denoting $X^{\prime}=X\setminus \{0\}$, then $(X^{\prime},\perp)$ is an orthogonality space, with $\perp$ defined 
in Section 3.  
We can see that $a^{\perp}=\{d\}$, $b^{\perp}=\{c\}$, $c^{\perp}=\{b\}$, $d^{\perp}=\{a\}$, 
$1^{\perp}=\emptyset$ and $\mathcal{CL}(X^{\prime},\perp)=\{\emptyset, \{a\}, \{b\}, \{c\}, \{d\}, X^{\prime}\}$. 
The maps: \\
$\hspace*{4cm}$ $\varphi^a:\complement a^{\perp}\longrightarrow \{a\}$, $\varphi^a x=a$, \\
$\hspace*{4cm}$ $\varphi^b:\complement b^{\perp}\longrightarrow \{b\}$, $\varphi^b x=b$, \\
$\hspace*{4cm}$ $\varphi^c:\complement c^{\perp}\longrightarrow \{c\}$, $\varphi^c x=c$, \\
$\hspace*{4cm}$ $\varphi^d:\complement d^{\perp}\longrightarrow \{d\}$, $\varphi^d x=d$, \\
are Sasaki maps; hence, $(X^{\prime},\perp)$ is a Sasaki space. 
It follows that $(X^{\prime},\perp)$ is also a Dacey space.
\end{example} 

\begin{example} \label{nos-20} Consider the orthogonality space $(X^{\prime},\perp)$ from Example \ref{sasm-30}, 
where $\{a,d\}$, $\{a,c\}$, $\{b,d\}$ are the blocks of $X^{\prime}$. 
Denoting $E_1=\{a\}$, $E_2=\{c\}$, $\overline{E_1}=\{a\}$, $\overline{E_2}=\{c,d\}$, we have 
$E_1\subseteq \overline{E_1}$, $E_2\subseteq \overline{E_2}$, 
$\overline{E_1}^{\perp}=a^{\perp}=\{c,d\}=\overline{E_2}$ and $\overline{E_2}^{\perp}=\{c,d\}^{\perp}=\{a\}=\overline{E_1}$.  
It follows that $(\overline{E_1}, \overline{E_2})$ is the unique decomposition of the block partition $(E_1,E_2)$. 
Similarly, for the blocks $\{a,d\}$, $\{b,d\}$ there are unique decompositions satisfying the condition from 
Definition \ref{normal-20}. Hence, $(X^{\prime},\perp)$ is a normal orthogonality space. 
\end{example}

$\vspace*{1mm}$

\section{Concluding Remarks} 

There exist implicative formulations of several logical algebras -- such as implicative-Boolean algebras 
for classical logic, implicative-ortholattices for orthologic, and \\ 
implicative-orthomodular lattices 
for quantum logic -- that provide implication-based presentations of these systems.  
Therefore, redefining the orthogonality relation on the basis of an implication constitutes a natural direction of 
research on the foundation of quantum mechanics. 
Within the framework of implicative-ortholattices, this paper defines and studies the implicative versions 
of orthogonality spaces, Sasaki projections, Sasaki set of projections, Dacey spaces and Sasaki spaces. 
Based on the Sasaki projections, we define the commutativity between two elements of an implicative-ortholattice,  
which plays an important role for the study of distributivity in these structures. 
In relation to this topic, we investigate the implicative-Boolean center as the ``classical core" of an implicative-orthomodular lattice, and we characterize the implicative-orthomodular lattices in terms of their  implicative-Boolean subalgebras. 
We also introduce the Sasaki sets of projections on implicative-ortholattices, proving that an implicative-ortholattice is an implicative-orthomodular lattice if and only if it admits a full Sasaki set of projections.
Finally, all these results are used to redefine the Dacey, Sasaki, and normal orthogonality spaces and to present 
some of their properties within the framework of implicative-ortholattices. \\
We presented these notions and results with the intention of showing that, within the framework of orthogonality over implicative-ortholattices, some properties of quantum logic can be proved more simply and more naturally, 
and that new properties could be explored. 
In orthomodular lattices, orthogonality is defined purely as a relation derived from lattice order and orthocomplementation, whereas in implicative-orthomodular lattices it is characterized by the implication operation,  offering a richer logical perspective. \\
In addition to introducing a new framework for studying orthogonal spaces, we highlight several specific contributions 
of this paper to the study of quantum structures: \\
$-$ new characterizations of implicative-orthomodular lattices (Theorem \ref{dioml-05-60}, Proposition \ref{ooml-40},  
Theorem \ref{sioml-40-10}, Corollary \ref{cioml-90-10}); \\
$-$ characterizations of commutativity between two elements of an implicative-orthomodular lattices (Corollary \ref{cioml-60-20}, Theorem \ref{cioml-60-40}); \\
$-$ characterizations of implicative-Boolean algebras (Theorems \ref{cioml-100}, \ref{cioml-100-10}, 
\ref{cioml-100-20}, Corollary \ref{cioml-100-30}). \\
Following the results of this paper, we suggest potential topics for future research: \\
$(1)$ Based on Sasaki projections, one can define and study adjointness and residuation in implicative-ortholattices 
as powerful tools for expressing orthogonality algebraically, and for relating quantum logic to other algebraic structures. In a future research paper, we will adress this topic. \\ 
$(2)$ Probability theory can serve as an important bridge between algebraic structures (logics) and physical reality (measurement outcomes), and it can be developed and investigated within the framework of implicative-ortholattices.

$\vspace*{1mm}$

\begin{center}
\sc Acknowledgement 
\end{center}
The author is very grateful to the anonymous referees for their useful remarks and suggestions on the subject that helped improving the presentation.

$\vspace*{1mm}$


\begin{thebibliography}{99}


\bibitem{Beran} L. Beran, \emph{Orthomodular Lattices: Algebraic Approach. Mathematics and its Applications}, 
Springer, Netherland, 1985.

\bibitem{Birk1} G. Birkhoff, J. von Neumann, \emph{The logic of quantum mechanics}, Ann. Math. {\bf 37}(1936), 823--834. 

\bibitem{Bonzio1} S. Bonzio, I. Chajda, \emph{A note on orthomodular lattices}, Internat. J. Theoret. Phys. 
{\bf 56}(2017), 3740--3743.

\bibitem{Chajda1} I. Chajda, H. L\"{a}nger, \emph{Algebras and varieties where Sasaki operations form an adjoint pair}, 
arXiv.org:2408.03432. 

\bibitem{Chev} G. Chevalier, S. Pulmannov\'a, \emph{Composition of Sasaki projections}, Internat. J. Theoret. Phys. {\bf 31}(1992), 1599--1614

\bibitem{Ciu83} L.C. Ciungu, \emph{Implicative-orthomodular lattices}, arXiv.org:2401.12845, submitted.

\bibitem{Ciu84} L.C. Ciungu, \emph{Foulis-Holland theorem for implicative-orthomodular lattices}, 
arXiv.org:2403.16762, submitted.  

\bibitem{Ciu85} L.C. Ciungu, \emph{Classification of states on certain orthomodular structures}, 
J. Algebr. Hyperstruct. Log. Algebras {\bf 6(1)}(2025), 1--11. 

\bibitem{Dacey} J.R. Dacey, \emph{Orthomodular spaces}, Ph.D. Thesis, University of Massachusetts, Amherst 1968. 

\bibitem{DvPu} A. Dvure\v censkij, S. Pulmannov\'a, \emph{New trends in Quantum Structures}, Kluwer Academic Publishers, Dordrecht, Ister Science, Bratislava, 2000.

\bibitem{Eastdown} D. Eastdown, W.D. Munn, \emph{On semigroups with involution}, Bull. Austral. Math. Soc. 
{\bf 48}(1993), 93--100.

\bibitem{Egl1} K. Engesser, D.M. Gabbay, D. Lehmann (Eds.), \emph{Handbook of Quantum Logic and Quantum Structures - Quantum Structures}, Elsevier, Amsterdam, 2007.

\bibitem{Egl2} K. Engesser, D.M. Gabbay, D. Lehmann (Eds.), \emph{Handbook of Quantum Logic and Quantum Structures - Quantum Logic}, Elsevier, Amsterdam, 2009.

\bibitem{Finch} P.D. Finch, \emph{Sasaki projections on orthocomplemented posets}, Bull. Austral. Math. Soc. 
{\bf 1}(1969), 319--324.

\bibitem{Foulis2} D.J. Foulis, \emph{Baer $^*$-semigroups}, Proc. Amer. Math. Soc. {\bf 11(4)}(1960), 648--654.

\bibitem{Foulis1} D.J. Foulis, \emph{A note on orhomodular lattices}, Portugal. Math. {\bf 21}(1962), 65--72.

\bibitem{Holl1} S.S. Holland, \emph{A Radon-Nikodim theorem in dimension lattices}, Trans. Am. Math. Soc. 
{\bf 108}(1963), 66--87. 

\bibitem{Husimi} K. Husimi, \emph{Studies on the foundation of quantum mechanics. I.}, Proc. Phys. Math. Soc. Jpn. 
{\bf 19}(1937), 766--789.

\bibitem{Ior30} A. Iorgulescu, \emph{Algebras of logic vs. algebras}, In Adrian Rezus, editor, Contemporary Logic and Computing, Vol. 1 of Landscapes in Logic, pages 15--258, College Publications, 2020.

\bibitem{Ior35} A. Iorgulescu, \emph{BCK algebras versus m-BCK algebras. Foundations}, Studies in Logic, 
Vol. 96, 2022. 

\bibitem{Kalm1} G. Kalmbach, \emph{Orthomodular Lattices}, Academic Press, London, New York, 1983.

\bibitem{Kim1} H.S. Kim, Y.H. Kim, \emph{On BE-algebras}, Sci. Math. Jpn. {\bf 66}(2007), 113--116.

\bibitem{Lind1} B. Lindenhovius, T. Vetterlein, \emph{A characterisation of orthomodular spaces by Sasaki maps}, 
Internat. J. Theoret. Phys. {\bf 62}(2023), Article number: 59

\bibitem{PadRud} R. Padmanabhan, S. Rudeanu, \emph{Axioms for lattices and Boolean algebras}, World Scientific, Singapore, 2008.

\bibitem{Paseka1} J. Paseka, T. Vetterlein, \emph{Categories of orthogonality spaces}, J. Pure Appl. Algebra  
{\bf 226}(2022), 106859.

\bibitem{Paseka2} J. Paseka, T. Vetterlein, \emph{Normal orthogonality spaces}, J. Math. Anal. Appl.   
{\bf 507}(2022), 125730.

\bibitem{Sasaki} U. Sasaki, \emph{Orthocomplemented lattices satisfying the exchange axiom}, J. Sci. Hiroshima
Univ. Ser. A {\bf 17}(1954), 293--302. 

\bibitem{Varad1} V.S. Varadarajan, \emph{Geometry of Quantum Theory}, Vol. 1, D. Van Nostrand, Princeton, New Jersey, 1968.

\bibitem{Vetter1} T. Vetterlein, \emph{Orthogonality spaces of finite rank and the complex Hilbert spaces}, 
Int. J. Geom. Methods Mod. Phys. {\bf 16}(2019), 1950080. 

\bibitem{Vetter2} T. Vetterlein, \emph{Gradual transitivity in orthogonality spaces of finite rank}, Aequat. Math. 
{\bf 95}(2021), 483--503.  

\end{thebibliography}
\end{document}